\documentclass[10pt,reqno]{amsart}
\usepackage{graphicx}
\usepackage{amsfonts}
\usepackage{amssymb}
\usepackage{amsmath}
\usepackage{amsxtra}
\usepackage{latexsym}
\usepackage{epstopdf}
\usepackage{mathrsfs}
\usepackage{esint}
\usepackage{mathrsfs,galois,booktabs,ctable,threeparttable,tabularx,algorithm,algorithmic}
\usepackage{enumitem}

\newtheorem{theorem}{Theorem}[section]
\newtheorem{lemma}[theorem]{Lemma}
\newtheorem{Rule}[theorem]{Rule}
\newtheorem{assumption}{Assumption}[section]
\newtheorem{proposition}[theorem]{Proposition}

\theoremstyle{definition}

\newtheorem{example}{Example}[section]

\theoremstyle{remark}
\newtheorem{remark}{Remark}[section]

\numberwithin{equation}{section}

\topmargin  -1pt

\begin{document}

\title[]
{Dual gradient method for ill-posed problems using multiple repeated measurement data}

\author{Qinian Jin}

\address{Mathematical Sciences Institute, Australian National
University, Canberra, ACT 2601, Australia}
\email{qinian.jin@anu.edu.au} \curraddr{}

\author{Wei Wang}
\address{College of Data Science, Jiaxing University, Jiaxing, Zhejiang Province, 314001,  China}
\email{weiwang@zjxu.edu.cn}


\date{24 June 2022}


\def\p{\partial}
\def\l{\langle}
\def\r{\rangle}
\def\C{\mathcal C}
\def\D{\mathscr D}
\def\a{\alpha}
\def\b{\beta}
\def\d{\delta}

\def\la{\lambda}
\def\ep{\varepsilon}
\def\Ga{\Gamma}

\def\bx{{\bf x}}
\def\by{{\bf y}}
\def\bz{{\bf z}}
\def\bA{\bf A}
\def\bD{\bf D}
\def\bV{\bf V}
\def\bW{\bf W}

\def\N{\mathcal N}

\def\bA{{\bf A}}
\def\bx{{\bf x}}
\def\bb{{\bf b}}
\def\ba{{\bf a}}
\def\bV{{\bf V}}
\def\bW{{\bf W}}
\def\bQ{{\bf Q}}
\def\bD{{\bf D}}
\def\D{\mathscr D}
\def\X{{\mathscr X}}
\def\Y{{\mathscr Y}}
\def\C{{\mathscr C}}
\def\R{{\mathcal R}}
\def\E{{\mathcal E}}
\def\EE{{\mathbb E}}
\def\P{{\mathbb P}}

\begin{abstract}
We consider determining $\R$-minimizing solutions of linear ill-posed problems $A x = y$, where $A: \X \to \Y$ is a bounded linear operator from a Banach space $\X$ to a Hilbert space $\Y$ and $\R: \X \to [0, \infty]$ is a proper strongly convex penalty function. Assuming that multiple repeated independent identically distributed unbiased data of $y$ are available, we consider a dual gradient method to reconstruct the $\R$-minimizing solution using the average of these data. By terminating the method by either an {\it a priori} stopping rule or a statistical variant of the discrepancy principle, we provide the convergence analysis and derive convergence rates when the sought solution satisfies certain variational source conditions. Various numerical results are reported to test the performance of the method. 

\end{abstract}

\maketitle

\section{\bf Introduction}
\setcounter{equation}{0}
Inverse problems arise from many practical applications in science and engineering when one aims at determining unobservable causes from observed effects. Due to their significance in wide range of applications, inverse problems have received tremendous attention in the literature.

In this paper we will consider linear inverse problems of the form
\begin{align}\label{multi.1}
A x = y,
\end{align}
where $A: \X \to \Y$ is a bounded linear operator from a Banach space $\X$ to a Hilbert space $\Y$. Throughout the paper we will assume that $y \in \mbox{Ran}(A)$, the range of $A$, which guarantees (\ref{multi.1}) has a solution. However, (\ref{multi.1}) may have many solutions. In order to find a solution with desired feature, by incorporating {\it a priori} available information we take a proper, lower semi-continuous, convex function $\R: \X\to [0, \infty]$ and search for a solution $x^\dag$ of (\ref{multi.1}) satisfying
\begin{align}\label{multi.101}
\R(x^\dag) = \min\{\R(x): x\in \X \mbox{ and } A x = y\}
\end{align}
which is called a $\R$-minimizing solution of (\ref{multi.1}).

In practical applications, the exact data $y$ is generally unknown. Very often, we only have corrupted measurement data at hand. Due to the inherent ill-posedness property of inverse problems, the $\R$-minimizing solution of (\ref{multi.1}) may not depend continuously on the data. How to use the noisy data to reconstruct the sought solution therefore becomes a central topic in computational inverse problems.

To conquer ill-posedness, many iterative regularization methods have been developed to solve inverse problems using noisy data; 
see  \cite{BH2012,EHN1996,FS2010,JS2012,JW2013,SKHK2012} for instance. The performance of these methods depends crucially 
on a  proper termination of the iterations. In case an accurate upper bound of noise level is available, 
various {\it a posteriori} rules, including the prominent discrepancy principle, have been proposed to choose the 
stopping index of iteration. In case the information on noise level is difficult to infer, one has to resort to 
the so-called heuristic rules (\cite{HL1993,Jin2016,Jin2017,KN2008,KR2020}) which use only the noisy data to select
a termination index. Although Bakushiski’s veto (\cite{B1984}) states that heuristic rules can not lead to 
convergence in the sense of worst case scenario for any regularisation method, such rules often work as well as, 
or even better than, the rules using information on noise level. 
The drawback of a heuristic rule is that it chooses a stopping index of iteration based on minimizing a function 
over ${\mathbb N} := \{1, 2, \cdots\}$ for which the existence of a minimizer can not be guaranteed unless the noisy data satisfies 
certain restrictive conditions.

In this paper we assume that the data can be measured multiple times repeatedly and the sought solution does not change during the measurement. This is a common practice in applications where the experiments are set up to allow acquiring multiple 
observation data by repeating the experiments. Therefore, instead of the unknown exact data $y$, we are given multiple unbiased measurements $y_1, y_2, \cdots$ of $y$ which are independent identically distributed $\Y$-valued random variables. Based on these multiple measurements, for each $n\ge 1$ we use
\begin{align}\label{multi.102}
\bar y^{(n)}:= \frac{1}{n} \sum_{i=1}^n y_i
\end{align}
as an estimator of $y$. We then use $\bar y^{(n)}$ to construct regularized solutions and consider their behavior as $n \to \infty$.

Using the average of multiple measurements to decrease the data error is a standard noise reduction technique in engineering which is called the signal averaging method (\cite{HA2010,L2004,VD2018}). Assume $(y_i)$ are $\Y$-valued random variables defined on a probability space $(\Omega, {\mathcal F}, {\mathbb P})$ with $\EE$ denoting the expectation. If $(y_i)$ are unbiased, independent, identically distributed estimators of $y$ with a finite variance $\sigma^2: = {\mathbb E}[\|y_1-y\|^2]$, we have
\begin{align*}
\EE\left[\|\bar y^{(n)} - y\|^2\right]
= \frac{1}{n^2} \sum_{i,j=1}^n \EE\left[\l y_i-y, y_j - y\r\right] = \frac{1}{n^2} \sum_{i=1}^n \EE\left[\|y_i-y\|^2\right] = \frac{\sigma^2}{n}
\end{align*}
which demonstrates that using $\bar y^{(n)}$ as an estimator of $y$ can reduce the variance by a factor of $n$. Therefore, it is reasonable to expect more accurate approximate solution can be constructed using the averaged data $\bar y^{(n)}$ than using an individual data $y_i$.

In \cite{HJP2020} a general class of linear spectral regularization methods have been considered for solving linear ill-posed problems in Hilbert spaces using the average of multiple measurement data. Without using any knowledge of the noise distribution, the convergence and rates of convergence of the regularized solutions, as the number of measurements increases, have been provided under either an {\it a priori} parameter choice rule or the discrepancy principle. The results obtained in \cite{HJP2020} can be applied mainly for (\ref{multi.101}) with $\X$ being a Hilbert space and $\R(x) = \|x\|^2/2$. In many applications, however, the sought solution may sit in a Banach space instead of a Hilbert space, and the sought solution may have {\it a priori} known special features, such as nonnegativity, sparsity and piecewise constancy.
Therefore, it is necessary to develop regularization methods using multiple repeated measurement data for finding a $\R$-minimizing solution of (\ref{multi.1}) in a general Banach space $\X$ with a general convex penalty function $\R$.

In this paper we will consider a dual gradient method for solving (\ref{multi.101}) using multiple repeated measurement data. 
When the exact data $y$ is available, this method can be derived by applying the gradient method to the dual problem of
(\ref{multi.101}) and it takes the form (see \cite{Jin2022}) 
\begin{align}\label{multi.104}
\begin{split}
& x_t = \arg\min_{x\in \X} \left\{ \R(x) - \l \la_t, A x\r \right\},\\
& \la_{t+1} = \la_t - \gamma (A x_t - y)
\end{split}
\end{align}
with the initial guess $\la_0 = 0$, where  $A^*: \Y \to \X^*$ denotes the adjoint of $A$ and $\gamma>0$ is a step-size.
This method actually belongs to the class of Uzawa methods, see \cite{BT2014,T1991,U1958}. 
This method also has connection to the mirror descent method, 
see \cite{BH2012,Jin2022,JW2013,NY1983}.
In case the exact data $y$ is unavailable and we are given multiple repeated random 
measurement data $y_1, y_2, \cdots$, we may 
define $\bar y^{(n)}$ by (\ref{multi.102}) for each $n \ge 1$ and replace $y$ in (\ref{multi.104}) by $\bar y^{(n)}$ to obtain
\begin{align}\label{multi.2}
\begin{split}
x_t^{(n)} & = \arg\min_{x\in \X} \left\{ \R(x) - \l \la_t^{(n)}, A x\r \right\}, \\
\la_{t+1}^{(n)} & = \la_t^{(n)} - \gamma (A x_t^{(n)} - \bar y^{(n)})
\end{split}
\end{align}
with the initial guess $\la_0^{(n)} = 0$ and a step size $\gamma>0$. This is the dual 
gradient method that we will consider for solving (\ref{multi.101}) using the average of 
multiple random measurement data. Since $y_1, y_2, \cdots$ are random variables, we need to
consider the convergence of (\ref{multi.2}) based on a stochastic analysis. 

When $X$ is a Hilbert space and $\R(x) = \|x\|^2/2$, the method (\ref{multi.2}) becomes the Landweber iteration in Hilbert spaces using the average of multiple repeated measurement data which has been analyzed in \cite{HJP2020} as a special case of a class of linear spectral regularization methods. The analysis in \cite{HJP2020} is based on the spectral theory and singular value decomposition of bounded linear self-adjoint operators in Hilbert spaces. These tools however are no longer applicable to the method (\ref{multi.2}) due to the possible non-Hilbertian structure of $X$ and non-quadraticity of the regularization function $\R$. In this paper we use tools of convex analysis in Banach spaces to analyze the method (\ref{multi.2}) with general Banach space $X$ and general strongly convex function $\R$. Without assuming any knowledge of the noise distribution, we obtain the convergence and rates of convergence for the method (\ref{multi.2}) as $n\to \infty$ when the method is terminated by either an {\it a priori} stopping rule or a modification of the discrepancy principle.

The paper is organized as follows, In Section \ref{sect2} we give a brief review of some
basic facts from convex analysis in Banach spaces. In Section \ref{sect3} we provide the convergence analysis of the dual gradient method (\ref{multi.2}) when the method is terminated by either an {\it a priori} stopping rule or a modification of the discrepancy principle; in particular we derive the convergence rates when the sought solution satisfies certain variational source conditions.
Finally in Section \ref{sect4}, we provide various numerical results to test the performance of the proposed method.

\section{\bf Preliminaries}\label{sect2}

In this section, we collect some basic facts on convex analysis in Banach
spaces which will be used in the analysis of the dual gradient method (\ref{multi.2}); for
more details please refer to \cite{Z2002}.

Let $\X$ be a Banach space, we use $\X^*$ to denote its dual space; in case $\X$ is a Hilbert space, $\X^*$ is identified with $\X$. Given $x\in \X$ and $\xi\in \X^*$ we write $\l \xi, x\r = \xi(x)$ for the duality pairing. For a convex function $f : \X \to  (-\infty, \infty]$, its effective domain is
$$
\mbox{dom}(f) := \{x \in \X : f(x) < \infty\}.
$$
If $\mbox{dom}(f) \ne \emptyset$, $f$ is called proper.
Given $x\in \mbox{dom}(f)$, an element $\xi\in \X^*$ is called a subgradient of $f$ at $x$ if
$$
f(\bar x) \ge  f(x) + \l\xi, \bar x - x\r,  \quad \forall \bar x \in \X.
$$
The collection of all subgradients of $f$ at $x$ is denoted as $\p f(x)$ and is called the
subdifferential of $f$ at $x$. If $\p f(x) \ne \emptyset$, then $f$ is called subdifferentiable at $x$. By definition we have
\begin{align}\label{optimality}
x^*\in \arg\min_{x\in \X} f(x) \Longleftrightarrow 0 \in \p f(x^*).
\end{align}
Note that $x \to \p f(x)$ defines a set-valued mapping $\p f$. Let
$$
\mbox{dom}(\p f) := \{x \in \mbox{dom}(f) : \p f(x) \ne \emptyset\}.
$$
which is the domain of definition of $\p f$. Given $x\in \mbox{dom}(\p f)$ and $\xi \in \p f(x)$, the Bregman distance induced by $f$ at $x$ in
the direction $\xi$ is defined by
$$
D_f^\xi(\bar x, x) := f(\bar x) -  f(x) - \l \xi, \bar x - x\r,  \quad \forall \bar x \in \X
$$
which is always nonnegative.

For a proper convex function $f : \X\to (-\infty, \infty]$, its convex conjugate is
defined by
$$
f^*(\xi) :=  \sup_{x\in X}  \{\l \xi, x\r - f(x)\}, \quad  \xi \in \X^*
$$
which is a convex function taking values in $(-\infty, \infty]$. By definition there holds the Fenchel-Young inequality
\begin{align}\label{dgm.21}
f^*(\xi) + f(x) \ge \l \xi, x\r, \quad \forall x\in \X, \  \xi\in \X^*.
\end{align}
If $f : X\to  (-\infty, \infty]$ is proper, lower semi-continuous
and convex, $f^*$ is also proper and
\begin{align}\label{dgm.22}
\xi \in \p f(x) \Longleftrightarrow x\in \p f^*(\xi) \Longleftrightarrow  f(x) + f^*(\xi) = \l \xi, x\r.
\end{align}

In applications, optimization problems of the form
$$
\inf_{x\in \X} \left\{f(x) + g(A x)\right\}
$$
are frequently encountered, where $A: \X \to \Y$ is a bounded linear operator between two Banach spaces $\X$ and $\Y$, and $f: \X \to (-\infty, \infty]$ and $g: \Y \to (-\infty, \infty]$ are proper convex functions. Let $A^*: \Y^*\to \X^*$ denote the adjoint of $A$. By the Fenchel-Young inequality we have
$$
f(x) \ge \l \xi, x\r - f^*(\xi), \qquad g(Ax) \ge \l -\eta, A x\r - g^*(-\eta)
$$
and therefore
$$
f(x) + g(A x) \ge \l \xi - A^* \eta, x\r - f^*(\xi) - g^*(-\eta)
$$
for all $x \in \X$, $\xi\in \X^*$ and $\eta \in \Y^*$. Thus, taking $\xi = A^* \eta$ gives
$$
\inf_{x\in \X} \left\{f(x) + g(A x)\right\} \ge \sup_{\eta\in \Y^*} \left\{-f^*(A^*\eta) - g^*(-\eta)\right\}.
$$
It is natural to ask under what conditions on $f$ and $g$ we have equality in the above equation. This is answered by the Fenchel-Rockafellar duality formula given below (see \cite[Corollary 2.8.5]{Z2002}).

\begin{proposition}\label{dgm.prop21}
Let $\X$ and $\Y$ be Banach spaces, let $f : \X \to (-\infty, \infty]$ and $g : \Y \to (-\infty, \infty]$ be proper, convex functions, and let $A : \X \to \Y$ be a bounded linear operator. If there is
$x_0 \in \emph{dom}(f)$ such that $A x_0 \in \emph{dom}(g)$ and $g$ is continuous at $A x_0$, then
\begin{align}\label{dgm.FRD}
\inf_{x\in \X} \{f(x) + g(Ax)\} = \sup_{\eta \in \Y^*} \{-f^*(A^*\eta) - g^*(-\eta) \}.
\end{align}
\end{proposition}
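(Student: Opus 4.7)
The plan is to establish strong duality by the standard perturbation-function method. The $\ge$ direction of \eqref{dgm.FRD} has already been derived in the discussion preceding the statement, so I focus on the reverse inequality. Writing $v := \inf_{x\in\X}\{f(x)+g(Ax)\}$, if $v=-\infty$ then weak duality forces the supremum to equal $-\infty$ as well, so I may assume $v\in\mathbb{R}$.

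First, I would introduce the value (perturbation) function $p:\Y\to[-\infty,\infty]$ by
\[
p(y) := \inf_{x\in\X}\bigl\{f(x)+g(Ax+y)\bigr\},
\]
so $p(0)=v$. Convexity of $p$ in $y$ is a routine computation from the convexity of $f,g$ and the linearity of $A$. The qualification hypothesis is then used to control $p$ near the origin: since $g$ is continuous at $Ax_0$ and finite there, there exist $\delta>0$ and $M<\infty$ with $g(Ax_0+y)\le M$ whenever $\|y\|<\delta$. Consequently
\[
p(y) \le f(x_0) + g(Ax_0+y) \le f(x_0)+M
\]
for $\|y\|<\delta$, so $p$ is bounded above on an open neighborhood of $0$, while $p(0)=v$ is finite by assumption.

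Next I would invoke the classical Banach-space fact that a proper convex function which is bounded above on a neighborhood of a point in its effective domain is continuous at that point and has non-empty subdifferential there (this is the only place where Hahn-Banach is used). Hence there exists $\eta\in\Y^*$ with $-\eta\in\partial p(0)$, giving $p(y)\ge v-\langle\eta,y\rangle$ for every $y\in\Y$. Unfolding the definition of $p$ yields
\[
f(x)+g(Ax+y)\ge v-\langle\eta,y\rangle \qquad \text{for all } x\in\X,\ y\in\Y,
\]
and the substitution $z=Ax+y$ rewrites this as
\[
\bigl(f(x)-\langle A^*\eta,x\rangle\bigr) + \bigl(g(z)+\langle\eta,z\rangle\bigr) \ge v
\]
for all $x\in\X$ and $z\in\Y$. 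Taking the infimum in $x$ and then in $z$, and recognising the Legendre--Fenchel conjugates, yields $-f^*(A^*\eta)-g^*(-\eta)\ge v$. Combined with the already-established weak duality, this gives equality in \eqref{dgm.FRD} and even exhibits an optimal dual variable.

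The main obstacle is the non-emptiness of $\partial p(0)$ in an arbitrary Banach space: one cannot simply separate graphs, and one must verify that $0$ lies in the norm-interior of $\mathrm{dom}(p)$ with $p$ locally bounded above. The qualification assumption on $g$ exists precisely to supply this local boundedness; once secured, the rest of the argument is essentially algebraic bookkeeping with conjugates.
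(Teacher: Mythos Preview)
Your argument is correct: the perturbation-function route you outline is the standard proof of Fenchel--Rockafellar duality, and each step (convexity of $p$, local boundedness above via continuity of $g$ at $Ax_0$, subdifferentiability of $p$ at $0$, and the algebraic unpacking into conjugates) is sound. One small point worth making explicit is that $p$ cannot take the value $-\infty$ on the ball $\|y\|<\delta$: if $p(y_1)=-\infty$ with $\|y_1\|<\delta$, then $p(-y_1)\le f(x_0)+M$ and convexity at the midpoint would force $p(0)=-\infty$, contradicting $p(0)=v\in\mathbb{R}$. With this, $p$ is proper on that ball and the continuity/subdifferentiability theorem applies cleanly.

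As for comparison with the paper: the paper does not actually prove this proposition. It states the result and cites \cite[Corollary~2.8.5]{Z2002} (Z\u{a}linescu), treating it as a known preliminary. Your perturbation argument is essentially the method underlying that reference, so you have supplied what the paper omits rather than taken a different route.
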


A proper function $f : \X \to (-\infty, \infty]$ is called $c_0$-strongly convex for some constant $c_0>0$ if
\begin{align}\label{dgm.23}
f(s\bar x + (1-s) x) + c_0 s(1-s) \|\bar x -x\|^2 \le  s f(\bar x) + (1-s) f(x)
\end{align}
for all $\bar x, x\in \mbox{dom}(f)$ and $s \in [0, 1]$. It is easy to show that for
a proper $c_0$-strongly convex function $f : \X \to (-\infty, \infty]$ there holds
\begin{align}\label{dgm.24}
D_f^\xi(\bar x, x) \ge c_0 \|x-\bar x\|^2
\end{align}
for all $\bar x\in \mbox{dom}(f)$, $x\in \mbox{dom}(\p f)$ and $\xi\in \p f(x)$.
Furthermore, \cite[Corollary 3.5.11]{Z2002} contains the following important result
concerning the differentiability of $f^*$ for strongly convex $f$.

\begin{proposition}\label{dgm.prop22}
Let $\X$ be a Banach space and let $f : \X \to (-\infty, \infty]$ be a proper,
lower semi-continuous, $c_0$-strongly convex function for some constant $c_0>0$.
Then $\emph{dom}(f^*) = \X^*$, $f^*$ is Fr\'{e}chet differentiable and its gradient
$\nabla f^*$ maps $\X^*$ into $\X$ with
$$
\|\nabla f^*(\xi) -\nabla f^*(\eta) \| \le \frac{\|\xi-\eta\|}{2 c_0}
$$
for all $\xi, \eta \in \X^*$.
\end{proposition}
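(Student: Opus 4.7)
The plan is to (i) show $f^*(\xi)<\infty$ and that the infimum defining $f^*(\xi)$ is uniquely attained for every $\xi\in\X^*$, (ii) derive a Lipschitz estimate for the dependence of the minimizer $x_\xi$ on $\xi$, and (iii) identify $x_\xi$ with the Fr\'echet gradient $\nabla f^*(\xi)$. Strong convexity of $f$ drives everything; no reflexivity of $\X$ is needed.

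For $\xi\in\X^*$, set $h_\xi(x):=f(x)-\l \xi,x\r$. Since $f$ is proper, lsc and convex, Hahn--Banach yields a continuous affine minorant, so $\mbox{dom}(f^*)\ne\emptyset$: pick $\xi_0\in\mbox{dom}(f^*)$, giving $f(x)\ge \l \xi_0,x\r-f^*(\xi_0)$. Applying (\ref{dgm.23}) at the midpoint of $x$ and a fixed $x_1\in\mbox{dom}(f)$,
\[
f(x)\ge 2 f\bigl(\tfrac{x+x_1}{2}\bigr)-f(x_1)+\tfrac{c_0}{2}\|x-x_1\|^2\ge \l \xi_0,x+x_1\r-2f^*(\xi_0)-f(x_1)+\tfrac{c_0}{2}\|x-x_1\|^2,
\]
so $h_\xi$ is bounded below by a coercive quadratic and in particular $f^*(\xi)<\infty$, proving $\mbox{dom}(f^*)=\X^*$. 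Applying the same midpoint inequality to two elements of a minimizing sequence $(x_n)$ of $h_\xi$ gives
\[
\tfrac{c_0}{4}\|x_n-x_m\|^2\le \tfrac12 h_\xi(x_n)+\tfrac12 h_\xi(x_m)-h_\xi\bigl(\tfrac{x_n+x_m}{2}\bigr)\longrightarrow 0,
\]
so $(x_n)$ is Cauchy, converges to some $x_\xi$, and $x_\xi$ minimizes $h_\xi$ by lower semi-continuity; strict convexity makes $x_\xi$ unique. Combining (\ref{optimality}) with (\ref{dgm.22}) yields $\xi\in\p f(x_\xi)$ and $x_\xi\in\p f^*(\xi)$.

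For $\xi,\eta\in\X^*$, summing the two Bregman inequalities obtained from (\ref{dgm.24}) gives
\[
\l \xi-\eta, x_\xi-x_\eta\r = D_f^\xi(x_\eta,x_\xi)+D_f^\eta(x_\xi,x_\eta) \ge 2 c_0\|x_\xi-x_\eta\|^2,
\]
and Cauchy--Schwarz yields $\|x_\xi-x_\eta\|\le\|\xi-\eta\|/(2c_0)$. Using (\ref{dgm.22}) to write $f^*(\zeta)=\l \zeta, x_\zeta\r-f(x_\zeta)$, a direct calculation gives the remainder identity
\[
f^*(\eta)-f^*(\xi)-\l \eta-\xi, x_\xi\r = D_f^\eta(x_\xi, x_\eta),
\]
which is nonnegative and bounded above by $\l \xi-\eta, x_\xi-x_\eta\r$ (via $D_f^\xi(x_\eta,x_\xi)\ge 0$), hence by $\|\xi-\eta\|^2/(2c_0)$. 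This proves Fr\'echet differentiability of $f^*$ at $\xi$ with $\nabla f^*(\xi)=x_\xi$, and the desired Lipschitz constant $1/(2c_0)$ for $\nabla f^*$ is exactly the earlier argmin estimate.

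The main obstacle is the existence of the minimizer in step (i): in a non-reflexive $\X$, a coercive lsc convex function need not attain its infimum, so the usual weak-compactness argument is unavailable. Strong convexity circumvents this by forcing every minimizing sequence to be Cauchy via the midpoint inequality, completely sidestepping any appeal to reflexivity or weak sequential compactness. Once existence is in hand, the Lipschitz and differentiability parts reduce to symmetric use of (\ref{dgm.24}) and the sandwich $0\le D_f^\eta(x_\xi,x_\eta)\le \|\xi-\eta\|^2/(2c_0)$.
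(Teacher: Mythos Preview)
Your proof is correct. The paper itself does not prove this proposition; it simply cites \cite[Corollary 3.5.11]{Z2002} and records the statement for later use. You have instead supplied a complete, self-contained argument.

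The key merit of your approach is the Cauchy-sequence argument in step (i): by applying the strong-convexity midpoint inequality to two terms of a minimizing sequence for $h_\xi$, you force convergence in norm without any appeal to reflexivity or weak compactness, which is exactly what is needed to get $\nabla f^*(\xi)\in\X$ rather than $\X^{**}$. The remainder identity $f^*(\eta)-f^*(\xi)-\l\eta-\xi,x_\xi\r=D_f^\eta(x_\xi,x_\eta)$ together with the sandwich $0\le D_f^\eta(x_\xi,x_\eta)\le \l\xi-\eta,x_\xi-x_\eta\r\le\|\xi-\eta\|^2/(2c_0)$ then gives Fr\'echet differentiability and the Lipschitz bound in one stroke. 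Two minor remarks: the inequality $\l\xi-\eta,x_\xi-x_\eta\r\le\|\xi-\eta\|\,\|x_\xi-x_\eta\|$ is the dual-norm bound rather than ``Cauchy--Schwarz'' in this Banach setting; and it is worth stating explicitly that $h_\xi$ inherits lower semicontinuity and $c_0$-strong convexity from $f$ (since it differs by a continuous linear term), which justifies both the midpoint inequality for $h_\xi$ and the passage to the limit $x_\xi$.
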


It should be emphasized that $\X$ in Proposition 2.2 can be an arbitrary Banach
space; it can even be a general normed vector spaces. The gradient $\nabla f^*$ of $f^*$ is in general a mapping from $\X^* \to \X^{**}$, the second dual space of $\X$. Proposition 2.2 actually concludes that, for each $\xi\in \X^*$,
$\nabla f^*(\xi)$ is an element in $\X$, and thus $\nabla f^*$ is a mapping from $\X^*$ to $\X$.

Given a proper strongly convex function $f: \X\to (-\infty, \infty]$, we may consider for each $\xi\in \X^*$ the convex minimization problem
\begin{align}\label{convmin}
\min_{x\in \X}\left\{f(x)-\l \xi, x\r\right\}
\end{align}
which is involved in the dual gradient method (\ref{multi.2}). According to \cite[Theorem 3.5.8]{Z2002}, (\ref{optimality}), (\ref{dgm.22}) and Proposition \ref{dgm.prop22} we have

\begin{proposition}\label{dgm:prop23}
If $f:\X\to (-\infty, \infty]$ is a proper, lower semi-continuous, strongly convex function, then for any $\xi\in \X^*$ the minimization problem (\ref{convmin}) has a unique minimizer given by $\nabla f^*(\xi)$.
\end{proposition}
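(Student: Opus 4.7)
The plan is to characterize minimizers of (\ref{convmin}) via subdifferential calculus and then identify that characterization with $\nabla f^*(\xi)$ using Proposition \ref{dgm.prop22}. Setting $\Phi(x) := f(x) - \l \xi, x\r$, I first observe that $\Phi$ is proper, lower semi-continuous, and inherits the $c_0$-strong convexity of $f$ because only a continuous linear term has been subtracted. The optimality condition (\ref{optimality}) then says $x^*$ minimizes $\Phi$ if and only if $0 \in \p \Phi(x^*)$; since $\l \xi, \cdot\r$ is everywhere continuous and linear, the sum rule for subdifferentials is trivial and collapses this to $\xi \in \p f(x^*)$.

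Next I would invoke the Fenchel-Young characterization (\ref{dgm.22}) to rewrite $\xi \in \p f(x^*)$ as $x^* \in \p f^*(\xi)$. Proposition \ref{dgm.prop22} guarantees that $f^*$ is Fr\'{e}chet differentiable on all of $\X^*$; in particular $\nabla f^*(\xi) \in \p f^*(\xi)$, providing the candidate minimizer, and Fr\'{e}chet differentiability forces the convex subdifferential to reduce to the singleton $\p f^*(\xi) = \{\nabla f^*(\xi)\}$. Threading these equivalences together yields $x^* = \nabla f^*(\xi)$ as the unique minimizer, establishing existence and uniqueness simultaneously.

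As a more self-contained verification once the candidate $x^* := \nabla f^*(\xi)$ has been named, I can use (\ref{dgm.22}) in the equality form $f(x^*) + f^*(\xi) = \l \xi, x^*\r$ together with the Fenchel-Young inequality (\ref{dgm.21}) to deduce $f(x) - \l \xi, x\r \ge -f^*(\xi) = f(x^*) - \l \xi, x^*\r$ for every $x \in \X$, and uniqueness then falls out of the strong convexity bound (\ref{dgm.24}) applied to $\Phi$. The only delicate point in the whole argument is the standard convex-analytic passage from ``$f^*$ is Fr\'{e}chet differentiable at $\xi$'' to ``$\p f^*(\xi) = \{\nabla f^*(\xi)\}$'', which is precisely the content of \cite[Theorem 3.5.8]{Z2002}; once that bridge is accepted, the remainder of the proof is a short chain of equivalences through (\ref{optimality}) and (\ref{dgm.22}).
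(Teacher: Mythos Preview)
Your proposal is correct and follows precisely the route the paper indicates: the paper does not write out a proof but merely cites \cite[Theorem 3.5.8]{Z2002}, (\ref{optimality}), (\ref{dgm.22}) and Proposition \ref{dgm.prop22}, and your argument is exactly the natural unpacking of that chain of references. Your additional Fenchel--Young verification of existence is a nice redundancy but not needed beyond what the cited equivalences already provide.
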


\section{\bf Convergence analysis}\label{sect3}

In this section we will provide convergence analysis of the method (\ref{multi.2}) when it is terminated by either an {\it a priori} stopping rule or a statistical variant of the discrepancy principle. We will make use of the following assumption.

\begin{assumption}\label{ass:dgm}
\begin{enumerate}
\item[\emph{(i)}] $\X$ is a Banach space, $\Y$ is a Hilbert space, and $A: \X \to \Y$ is a bounded linear operator.

\item[\emph{(ii)}] $\R: \X \to [0, \infty]$ is proper, lower semi-continuous and $c_0$-strongly convex for some 
constant $c_0>0$.

\item[\emph{(iii)}] The equation $A x  = y$ has a solution in $\emph{dom}(\R)$.

\item[\emph{(iv)}] $y_1, y_2, \cdots$ is a sequence of independent identically distributed $\Y$-valued random variables defined on a probability space $(\Omega, {\mathcal F}, {\mathbb P})$ with $\EE[y_1] = y$ and $0<\sigma^2 := \EE[\|y_1-y\|^2] <\infty$.
\end{enumerate}
\end{assumption}

For a sequence of noisy data $\{y_i\}$ satisfying (iv) in Assumption \ref{ass:dgm}, we define $\bar y^{(n)}$ by (\ref{multi.102}) for each $n \ge 1$. There holds
\begin{align}\label{variance}
\EE\left[\|\bar y^{(n)} - y\|^2\right] = \frac{\sigma^2}{n}.
\end{align}
According to (i)-(iii) in Assumption \ref{ass:dgm} and Proposition \ref{dgm:prop23}, the equation (\ref{multi.1}) has a unique $\R$-minimizing solution, denoted as $x^\dag$, and the method (\ref{multi.2}) is well-defined with
\begin{align}
A^* \la_t^{(n)} \in \p \R(x_t^{(n)}) \quad \mbox{ and } \quad x_t^{(n)}= \nabla \R^*(A^* \la_t^{(n)})
\end{align}
for each integer $t\ge 0$.

Our convergence analysis of the method (\ref{multi.2}), in particular the derivation of convergence rates, is based on the following result which has been established in \cite{Jin2022}.

\begin{proposition}\label{dgm.prop1}
Let Assumption \ref{ass:dgm} hold and let $d_{\bar y^{(n)}}(\la):=\R^*(A^*\la)-\l \la, \bar y^{(n)}\r$. Let $L:=\|A\|^2/(2c_0)$. Consider the dual gradient method (\ref{multi.2}) with $\la_0^{(n)}=0$. If $0<\gamma < 1/L$ then for any $\la\in \Y$ there holds \begin{align*}
d_{\bar y^{(n)}}(\la) -d_{\bar y^{(n)}}(\la_{t+1}^{(n)})
& \ge\frac{1}{2\gamma(t+1)} \left(\|\la-\la_{t+1}^{(n)}\|^2 - \|\la\|^2\right) \\
& \quad \, + \left\{\left( \frac{1}{2}-\frac{L\gamma}{4}\right)t + \left(\frac{1}{2} - \frac{L\gamma}{2}\right)\right\}\gamma \|A x_t^{(n)} -\bar y^{(n)}\|^2
\end{align*}
for all integers $t\ge 0$.
\end{proposition}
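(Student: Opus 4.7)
The plan is to view the iteration $\lambda_{s+1}^{(n)} = \lambda_s^{(n)} - \gamma(A x_s^{(n)} - \bar y^{(n)})$ as plain gradient descent applied to the convex dual objective $d_{\bar y^{(n)}}$. By Proposition \ref{dgm.prop22}, $\R^*$ is Fr\'{e}chet differentiable with $1/(2c_0)$-Lipschitz gradient, so the chain rule together with the adjoint identity gives $\nabla d_{\bar y^{(n)}}(\lambda) = A\nabla \R^*(A^*\lambda) - \bar y^{(n)}$; combined with Proposition \ref{dgm:prop23} this yields $\nabla d_{\bar y^{(n)}}(\lambda_s^{(n)}) = A x_s^{(n)} - \bar y^{(n)}$. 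In particular $\nabla d_{\bar y^{(n)}}$ is $L$-Lipschitz with $L = \|A\|^2/(2c_0)$, the function $d_{\bar y^{(n)}}$ is convex, and the iteration is the gradient descent method with step size $\gamma < 1/L$.

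The first main step is the standard ``three-point'' per-iteration inequality obtained by combining the descent lemma
\begin{equation*}
d_{\bar y^{(n)}}(\lambda_{s+1}^{(n)}) \le d_{\bar y^{(n)}}(\lambda_s^{(n)}) - \gamma\Bigl(1 - \tfrac{L\gamma}{2}\Bigr)\|A x_s^{(n)} - \bar y^{(n)}\|^2
\end{equation*}
with the subgradient inequality $d_{\bar y^{(n)}}(\lambda) \ge d_{\bar y^{(n)}}(\lambda_s^{(n)}) + \langle A x_s^{(n)} - \bar y^{(n)}, \lambda - \lambda_s^{(n)}\rangle$. Writing $A x_s^{(n)} - \bar y^{(n)} = (\lambda_s^{(n)} - \lambda_{s+1}^{(n)})/\gamma$ and applying the polarisation identity $2\langle \lambda_s^{(n)} - \lambda_{s+1}^{(n)}, \lambda - \lambda_s^{(n)}\rangle = \|\lambda - \lambda_{s+1}^{(n)}\|^2 - \|\lambda - \lambda_s^{(n)}\|^2 - \|\lambda_s^{(n)} - \lambda_{s+1}^{(n)}\|^2$ produces, after a short rearrangement,
\begin{equation*}
d_{\bar y^{(n)}}(\lambda) - d_{\bar y^{(n)}}(\lambda_{s+1}^{(n)}) \ge \tfrac{1}{2\gamma}\bigl(\|\lambda - \lambda_{s+1}^{(n)}\|^2 - \|\lambda - \lambda_s^{(n)}\|^2\bigr) + \gamma\Bigl(\tfrac{1}{2} - \tfrac{L\gamma}{2}\Bigr)\|A x_s^{(n)} - \bar y^{(n)}\|^2.
\end{equation*}
Summing from $s = 0$ to $t$ telescopes the quadratic terms since $\lambda_0^{(n)} = 0$. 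To sharpen the right-hand side I would invoke the descent lemma a second time to write $d_{\bar y^{(n)}}(\lambda_{s+1}^{(n)}) \ge d_{\bar y^{(n)}}(\lambda_{t+1}^{(n)}) + \gamma(1 - L\gamma/2)\sum_{j=s+1}^t \|A x_j^{(n)} - \bar y^{(n)}\|^2$; summing over $s$ and exchanging the order of summation contributes the extra weighted term $\gamma(1 - L\gamma/2)\sum_{j=1}^t j\|A x_j^{(n)} - \bar y^{(n)}\|^2$.

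To collapse both sums into a single multiple of $\|A x_t^{(n)} - \bar y^{(n)}\|^2$, I need the monotonicity $\|A x_{s+1}^{(n)} - \bar y^{(n)}\| \le \|A x_s^{(n)} - \bar y^{(n)}\|$. This follows from the co-coercivity inequality $\langle \nabla d_{\bar y^{(n)}}(\lambda_{s+1}^{(n)}) - \nabla d_{\bar y^{(n)}}(\lambda_s^{(n)}), \lambda_{s+1}^{(n)} - \lambda_s^{(n)}\rangle \ge (1/L)\|\nabla d_{\bar y^{(n)}}(\lambda_{s+1}^{(n)}) - \nabla d_{\bar y^{(n)}}(\lambda_s^{(n)})\|^2$ for a convex function with $L$-Lipschitz gradient: substituting $\lambda_{s+1}^{(n)} - \lambda_s^{(n)} = -\gamma \nabla d_{\bar y^{(n)}}(\lambda_s^{(n)})$ into the expansion of $\|\nabla d_{\bar y^{(n)}}(\lambda_{s+1}^{(n)})\|^2$ gives
\begin{equation*}
\|\nabla d_{\bar y^{(n)}}(\lambda_{s+1}^{(n)})\|^2 \le \|\nabla d_{\bar y^{(n)}}(\lambda_s^{(n)})\|^2 - \Bigl(\tfrac{2}{L\gamma} - 1\Bigr)\|\nabla d_{\bar y^{(n)}}(\lambda_{s+1}^{(n)}) - \nabla d_{\bar y^{(n)}}(\lambda_s^{(n)})\|^2,
\end{equation*}
which is bounded by $\|\nabla d_{\bar y^{(n)}}(\lambda_s^{(n)})\|^2$ whenever $\gamma < 2/L$, a condition comfortably implied by $\gamma < 1/L$. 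Consequently $\sum_{s=0}^t \|A x_s^{(n)} - \bar y^{(n)}\|^2 \ge (t+1)\|A x_t^{(n)} - \bar y^{(n)}\|^2$ and $\sum_{j=1}^t j\|A x_j^{(n)} - \bar y^{(n)}\|^2 \ge \tfrac{t(t+1)}{2}\|A x_t^{(n)} - \bar y^{(n)}\|^2$, so substituting into the summed bound and dividing by $t+1$ turns the coefficient of $\|A x_t^{(n)} - \bar y^{(n)}\|^2$ into $\gamma\bigl[(\tfrac{1}{2} - \tfrac{L\gamma}{2}) + (1 - \tfrac{L\gamma}{2})\tfrac{t}{2}\bigr] = \gamma\bigl\{(\tfrac{1}{2} - \tfrac{L\gamma}{4})t + (\tfrac{1}{2} - \tfrac{L\gamma}{2})\bigr\}$, as claimed. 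The only non-routine ingredient is this gradient-norm monotonicity; everything else is careful bookkeeping of telescoping and of the double sum.
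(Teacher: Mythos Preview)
The paper does not actually prove Proposition~\ref{dgm.prop1}; it simply cites \cite{Jin2022} for the result. So there is no in-paper argument to compare against.

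Your proof is correct and complete. You correctly identify the iteration as gradient descent on the convex dual function $d_{\bar y^{(n)}}$, whose gradient $\nabla d_{\bar y^{(n)}}(\la)=A\nabla\R^*(A^*\la)-\bar y^{(n)}$ is $L$-Lipschitz on the Hilbert space $\Y$ by Proposition~\ref{dgm.prop22}. The per-step three-point inequality, the telescoping sum, the second use of the descent lemma to replace each $d_{\bar y^{(n)}}(\la_{s+1}^{(n)})$ by $d_{\bar y^{(n)}}(\la_{t+1}^{(n)})$ plus a residual, and the exchange of summation producing $\sum_{j=1}^t j\|A x_j^{(n)}-\bar y^{(n)}\|^2$ are all handled correctly, and the final arithmetic
\[
\Bigl(\tfrac12-\tfrac{L\gamma}{2}\Bigr)+\Bigl(1-\tfrac{L\gamma}{2}\Bigr)\tfrac{t}{2}
=\Bigl(\tfrac12-\tfrac{L\gamma}{4}\Bigr)t+\Bigl(\tfrac12-\tfrac{L\gamma}{2}\Bigr)
\]
reproduces the claimed constants exactly. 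The one step that deserves a comment is the gradient-norm monotonicity $\|A x_{s+1}^{(n)}-\bar y^{(n)}\|\le\|A x_s^{(n)}-\bar y^{(n)}\|$: your appeal to $1/L$-co-coercivity of $\nabla d_{\bar y^{(n)}}$ (the Baillon--Haddad property for convex $L$-smooth functions) is legitimate precisely because $\Y$ is Hilbert, as assumed in Assumption~\ref{ass:dgm}(i); the ensuing computation showing $\|\nabla d_{\bar y^{(n)}}(\la_{s+1}^{(n)})\|^2\le\|\nabla d_{\bar y^{(n)}}(\la_s^{(n)})\|^2$ whenever $\gamma<2/L$ is clean and correct.
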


Based on Proposition \ref{dgm.prop1}, we can obtain the following estimates which will be used frequently in the forthcoming analysis.

\begin{lemma}\label{multi.lem6}
Let Assumption \ref{ass:dgm} hold and consider the method (\ref{multi.2}) with $\la_0^{(n)}=0$. Let $L:=\|A\|^2/(2c_0)$ and assume $0<\gamma<1/L$. Then
$$
c\gamma(t+1) \|A x_t^{(n)}-\bar y^{(n)}\|^2 + \frac{1}{8\gamma(t+1)}\|\la_{t+1}^{(n)}\|^2
\le \eta_t + \gamma(t+1) \|\bar y^{(n)} - y\|^2
$$
for all $t\ge 0$, where $c:=1/2- L\gamma/2>0$ and
$$
\eta_t:= \sup_{x\in \X} \left\{\R(x^\dag) - \R(x) -\frac{1}{3} \gamma (t+1) \|A x- y\|^2\right\}.
$$
\end{lemma}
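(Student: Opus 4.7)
The plan is to apply Proposition~\ref{dgm.prop1} with a \emph{free} dual variable $\la\in\Y$, rewrite the resulting estimate in terms of the exact-data dual functional $d_y(\la):=\R^*(A^*\la)-\l\la,y\r$ at the cost of a Young residual in $\|\bar y^{(n)}-y\|^2$, extract the $\|\la_{t+1}^{(n)}\|^2$ piece by a standard norm splitting, and finally pass to the infimum over $\la$ so that Fenchel--Rockafellar duality identifies the penalised dual with the quantity defining $\eta_t$.

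First, using the lower bound $\{(\tfrac12-\tfrac{L\gamma}{4})t+(\tfrac12-\tfrac{L\gamma}{2})\}\gamma\ge c\gamma(t+1)$ for the coefficient in Proposition~\ref{dgm.prop1} and the identity $d_{\bar y^{(n)}}(\mu)=d_y(\mu)-\l\mu,\bar y^{(n)}-y\r$ for $\mu\in\{\la,\la_{t+1}^{(n)}\}$, that proposition recasts as
\[
c\gamma(t+1)\|Ax_t^{(n)}-\bar y^{(n)}\|^2+\frac{\|\la-\la_{t+1}^{(n)}\|^2}{2\gamma(t+1)} \le d_y(\la)-d_y(\la_{t+1}^{(n)})-\l\la-\la_{t+1}^{(n)},\bar y^{(n)}-y\r+\frac{\|\la\|^2}{2\gamma(t+1)}.
\]
Young's inequality with weight $s=2\gamma(t+1)$ controls the cross term by $\tfrac{1}{4\gamma(t+1)}\|\la-\la_{t+1}^{(n)}\|^2+\gamma(t+1)\|\bar y^{(n)}-y\|^2$; the first piece absorbs half of the $\la-\la_{t+1}^{(n)}$-quadratic on the left, while the second becomes exactly the noise contribution of the target right-hand side. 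Fenchel--Young at $x^\dag$ (using $Ax^\dag=y$) provides $d_y(\la_{t+1}^{(n)})\ge \l\la_{t+1}^{(n)},Ax^\dag-y\r-\R(x^\dag)=-\R(x^\dag)$, which introduces $+\R(x^\dag)$ on the right.

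Next, the elementary splitting $\|\la-\la_{t+1}^{(n)}\|^2\ge \tfrac12\|\la_{t+1}^{(n)}\|^2-\|\la\|^2$ converts the surviving $\tfrac{1}{4\gamma(t+1)}\|\la-\la_{t+1}^{(n)}\|^2$ into $\tfrac{1}{8\gamma(t+1)}\|\la_{t+1}^{(n)}\|^2$, at the cost of an extra $\tfrac{1}{4\gamma(t+1)}\|\la\|^2$ debt that combines with the pre-existing $\tfrac{1}{2\gamma(t+1)}\|\la\|^2$ to give $\tfrac{3}{4\gamma(t+1)}\|\la\|^2$. The result is the $\la$-parametrised inequality
\[
c\gamma(t+1)\|Ax_t^{(n)}-\bar y^{(n)}\|^2+\frac{\|\la_{t+1}^{(n)}\|^2}{8\gamma(t+1)} \le d_y(\la)+\R(x^\dag)+\gamma(t+1)\|\bar y^{(n)}-y\|^2+\frac{3\|\la\|^2}{4\gamma(t+1)},
\]
valid for every $\la\in\Y$.

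Taking the infimum over $\la$ and applying Proposition~\ref{dgm.prop21} to the primal problem $\inf_{x\in\X}\{\R(x)+g(Ax)\}$ with $g(u):=\tfrac13\gamma(t+1)\|u-y\|^2$ (whose conjugate is $g^*(\la)=\l\la,y\r+\tfrac{3}{4\gamma(t+1)}\|\la\|^2$, making the dual precisely $\sup_\la\{-d_y(\la)-\tfrac{3}{4\gamma(t+1)}\|\la\|^2\}$) yields
\[
\inf_{\la\in\Y}\!\Bigl\{d_y(\la)+\frac{3\|\la\|^2}{4\gamma(t+1)}\Bigr\}=-\inf_{x\in\X}\!\Bigl\{\R(x)+\tfrac13\gamma(t+1)\|Ax-y\|^2\Bigr\}=\eta_t-\R(x^\dag),
\]
where the duality is applicable since $x^\dag\in\mbox{dom}(\R)$ and $g$ is continuous on all of $\Y$. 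Substitution cancels the $\R(x^\dag)$ contribution and leaves exactly $\eta_t+\gamma(t+1)\|\bar y^{(n)}-y\|^2$ on the right. The main obstacle is spotting this duality coupling: the conjugate identity $(\tfrac{\beta}{2}\|\cdot\|^2)^*=\tfrac{1}{2\beta}\|\cdot\|^2$ rigidly forces the weight $\tfrac13\gamma(t+1)$ hidden in $\eta_t$ to pair with the $\tfrac{3}{4\gamma(t+1)}\|\la\|^2$ debt produced by the norm splitting, and these in turn dictate the $\tfrac{1}{8\gamma(t+1)}$ prefactor on $\|\la_{t+1}^{(n)}\|^2$ in the lemma's statement.
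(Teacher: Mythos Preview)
Your proof is correct and follows essentially the same route as the paper's: start from Proposition~\ref{dgm.prop1}, switch $d_{\bar y^{(n)}}$ to $d_y$, control the resulting cross term by Young's inequality, bound $-d_y(\la_{t+1}^{(n)})\le\R(x^\dag)$ via Fenchel--Young, extract $\|\la_{t+1}^{(n)}\|^2$ by the splitting $\|\la_{t+1}^{(n)}\|^2\le 2(\|\la-\la_{t+1}^{(n)}\|^2+\|\la\|^2)$, and close with Fenchel--Rockafellar duality. The only differences are cosmetic (you swap the order of the Fenchel--Young and norm-splitting steps, and you spell out explicitly the conjugate $g^*$ that makes the $\tfrac{3}{4\gamma(t+1)}\|\la\|^2$ term pair with the $\tfrac13\gamma(t+1)$ weight in $\eta_t$), but the argument is the same.
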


\begin{proof}
From Proposition \ref{dgm.prop1} it follows for all $\la \in \Y$ that
\begin{align*}
& c \gamma (t+1) \|A x_t^{(n)}-\bar y^{(n)}\|^2 \\
& \le d_{\bar y^{(n)}}(\la) - d_{\bar y^{(n)}}(\la_{t+1}^{(n)})
- \frac{1}{2\gamma(t+1)} \left(\|\la_{t+1}^{(n)}-\la\|^2 - \|\la\|^2\right).
\end{align*}
Let $d_y(\la):= \R^*(A^*\la) - \l \la, y\r$. We then have
\begin{align}\label{dgm.14}
c \gamma (t+1) \|A x_t^{(n)}-\bar y^{(n)}\|^2
& \le d_y(\la) - d_y(\la_{t+1}^{(n)})+\l \la_{t+1}^{(n)} - \la, \bar y^{(n)}- y\r \nonumber\\
& \quad \, - \frac{1}{2\gamma(t+1)} \left(\|\la_{t+1}^{(n)}-\la\|^2 - \|\la\|^2\right).
\end{align}
By the Cauchy-Schwarz inequality we have
\begin{align*}
& \l \la_{t+1}^{(n)} - \la, \bar y^{(n)}- y\r\\
& \le \|\la_{t+1}^{(n)} - \la\| \|\bar y^{(n)}- y\|
\le \frac{1}{4 \gamma(t+1)} \| \la_{t+1}^{(n)} - \la\|^2
+ \gamma (t+1) \|\bar y^{(n)}- y\|^2.
\end{align*}
Combining this with (\ref{dgm.14}) gives
\begin{align*}
& c \gamma (t+1) \|A x_t^{(n)}-\bar y^{(n)}\|^2 + \frac{1}{4 \gamma(t+1)} \| \la_{t+1}^{(n)} - \la\|^2  \\
& \le d_y(\la) - d_y(\la_{t+1}^{(n)})+\frac{1}{2\gamma(t+1)} \|\la\|^2 + \gamma(t+1)\|\bar y^{(n)} - y\|^2
\end{align*}
which together with the inequality $\|\la_{t+1}^{(n)}\|^2 \le 2 (\|\la_{t+1}^{(n)}-\la\|^2 + \|\la\|^2)$ then shows that
\begin{align}\label{multi.6}
& c \gamma (t+1) \|A x_t^{(n)} - \bar y^{(n)}\|^2 + \frac{1}{8\gamma(t+1)} \|\la_{t+1}^{(n)}\|^2 \nonumber \\
& \le d_y(\la) - d_y(\la_{t+1}^{(n)}) + \frac{3\|\la\|^2}{4 \gamma (t+1)} + \gamma(t+1)\|\bar y^{(n)} - y\|^2.
\end{align}
By using the fact $y = A x^\dag$ and the Fenchel-Young inequality (\ref{dgm.21}), we have
$$
d_y(\la_{t+1}^{(n)}) = \R^*(A^* \la_{t+1}^{(n)}) - \l A^* \la_{t+1}^{(n)}, x^\dag\r \ge -\R(x^\dag).
$$
Therefore
\begin{align*}
& c \gamma (t+1) \|A x_t^{(n)} - \bar y^{(n)}\|^2 + \frac{1}{8\gamma(t+1)} \|\la_{t+1}^{(n)}\|^2 \\
& \le \R^*(A^*\la) -\l \la, y\r +\R(x^\dag) + \frac{3\|\la\|^2}{4 \gamma (t+1)} + \gamma(t+1)\|\bar y^{(n)} - y\|^2 \nonumber
\end{align*}
for all $\la\in \Y$ and hence
\begin{align*}
& c \gamma (t+1) \|A x_t^{(n)} - \bar y^{(n)}\|^2 + \frac{1}{8\gamma(t+1)} \|\la_{t+1}^{(n)}\|^2  \\
& \le \inf_{\la\in \Y} \left\{\R^*(A^*\la) -\l \la, y\r +\R(x^\dag) + \frac{3\|\la\|^2}{4 \gamma (t+1)}\right\} + \gamma(t+1)\|\bar y^{(n)} - y\|^2 \\
& = \R(x^\dag) - \sup_{\la\in \Y} \left\{-\R^*(A^*\la) +\l \la, y\r - \frac{3\|\la\|^2}{4 \gamma (t+1)}\right\} + \gamma(t+1)\|\bar y^{(n)} - y\|^2
\end{align*}
By the Fenchel-Rockafellar duality formula given in Proposition \ref{dgm.prop21} we finally obtain
\begin{align*}
& c \gamma (t+1) \|A x_t^{(n)} - \bar y^{(n)}\|^2 + \frac{1}{8\gamma(t+1)} \|\la_{t+1}^{(n)}\|^2 \\
& \le  \R(x^\dag) - \inf_{x\in \X} \left\{\R(x) +\frac{1}{3} \gamma(t+1)\|A x- y\|^2 \right\} + \gamma (t+1) \|\bar y^{(n)}- y\|^2 \\
& = \eta_t + \gamma (t+1) \|\bar y^{(n)}- y\|^2.
\end{align*}
 The proof is therefore complete.
\end{proof}

\subsection{\bf Convergence analysis under a priori stopping rule}

In this subsection we consider the dual gradient method (\ref{multi.2}) and show that $x_{t_n}^{(n)}$ converges to the unique $\R$-minimizing solution $x^\dag$ of (\ref{multi.1}) if $t_n$ is chosen such that $t_n \to \infty$ and $t_n/n\to 0$ as $n \to \infty$. Furthermore we derive the convergence rates under suitable {\it a priori} choice of $t_n$ when $x^\dag$ satisfies certain variational source conditions.

By using \cite[Lemma 3.7]{JW2013} we immediately obtain the following convergence result for the sequence $\{x_t, \la_t\}$ defined by the method (\ref{multi.104}) with exact data.

\begin{lemma}\label{multi.lem1}
Let (i)-(iii) in Assumption \ref{ass:dgm} hold. Consider the method (\ref{multi.104}) with exact data. If $\la_0 =0$ and $0<\gamma < 4 c_0/\|A\|^2$, then $D_\R^{A^*\la_t}(x^\dag, x_t) \to 0$ as $t \to \infty$.
\end{lemma}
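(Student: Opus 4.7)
The plan is to invoke \cite[Lemma 3.7]{JW2013}, which is a general convergence theorem for the dual gradient iteration with exact data; the work in the present lemma is to verify its hypotheses and to identify the Bregman distance in the statement with the dual optimality gap that appears in its conclusion.

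First I would rewrite the iteration (\ref{multi.104}) as constant-step gradient descent on the dual objective $d_y(\la):=\R^*(A^*\la)-\l\la,y\r$. By Proposition \ref{dgm:prop23} the inner minimization yields $x_t = \nabla\R^*(A^*\la_t)$, and by the chain rule $\nabla d_y(\la) = A\nabla\R^*(A^*\la)-y$, so $\nabla d_y(\la_t) = Ax_t - y$ and (\ref{multi.104}) becomes $\la_{t+1} = \la_t - \gamma\nabla d_y(\la_t)$. Proposition \ref{dgm.prop22} gives that $\nabla\R^*$ is $(2c_0)^{-1}$-Lipschitz on $\X^*$, hence $\nabla d_y$ is $L$-Lipschitz with $L=\|A\|^2/(2c_0)$, and the hypothesis $0<\gamma<4c_0/\|A\|^2$ is precisely the classical admissible range $0<\gamma<2/L$ for constant-step gradient descent on a convex function with Lipschitz gradient.

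Next I would recognize $D_\R^{A^*\la_t}(x^\dag,x_t)$ as the dual optimality gap. From (\ref{optimality}) applied to the subproblem defining $x_t$ one has $A^*\la_t\in\p\R(x_t)$, so by (\ref{dgm.22}) the Fenchel--Young identity $\R(x_t)+\R^*(A^*\la_t) = \l A^*\la_t, x_t\r$ holds. Substituting this into the definition of the Bregman distance and using $Ax^\dag=y$ yields
\begin{align*}
D_\R^{A^*\la_t}(x^\dag, x_t) = \R(x^\dag) + \R^*(A^*\la_t) - \l\la_t, y\r = \R(x^\dag) + d_y(\la_t).
\end{align*}
Strong duality for (\ref{multi.101}), obtained by applying Proposition \ref{dgm.prop21} with the quadratic relaxation $g_\varepsilon(z)=\frac{1}{2\varepsilon}\|z-y\|^2$ and then sending $\varepsilon\to 0$ (essentially the computation at the end of the proof of Lemma \ref{multi.lem6}), gives $\inf_{\la\in\Y}d_y(\la) = -\R(x^\dag)$, so
\begin{align*}
D_\R^{A^*\la_t}(x^\dag, x_t) = d_y(\la_t) - \inf_{\la\in\Y} d_y(\la).
\end{align*}

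With these two identifications in place, \cite[Lemma 3.7]{JW2013} delivers $d_y(\la_t)-\inf_\la d_y(\la)\to 0$ as $t\to\infty$, which is exactly the claim. The main subtlety is that $d_y$ need not attain its infimum on $\Y$ (solvability of the dual is not assumed), so convergence must be phrased via the infimum gap rather than the distance to a minimizer; this is already the formulation of the cited lemma, so no additional argument is needed.
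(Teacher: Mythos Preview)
Your proposal is correct and relies on the same external result as the paper, namely \cite[Lemma 3.7]{JW2013}; the paper in fact offers no proof at all beyond the single sentence ``By using \cite[Lemma 3.7]{JW2013} we immediately obtain the following convergence result'' preceding the lemma statement.

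The only remark worth making is that your scaffolding, while correct, may be more than is needed. In \cite{JW2013} the iteration is written in primal form $\xi_{t+1}=\xi_t-\gamma A^*(Ax_t-y)$, $x_{t+1}=\nabla\R^*(\xi_{t+1})$, which is exactly (\ref{multi.104}) after setting $\xi_t=A^*\la_t$, and Lemma~3.7 there is already phrased as $D_\R^{\xi_t}(x^\dag,x_t)\to 0$. So the paper's ``immediately'' is literal: no dual reinterpretation or strong-duality argument is required. Your identification $D_\R^{A^*\la_t}(x^\dag,x_t)=d_y(\la_t)-\inf_\la d_y(\la)$ is a nice observation (and is implicit in the later analysis of the paper), but for this particular lemma it is an unnecessary detour.
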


Although $\{x_t, \la_t\}$ are deterministic, $\{x_t^{(n)}, \la_t^{(n)}\}$ are random variables. Therefore, we need 
to consider $\{x_t^{(m)}, \la_t^{(n)}\}$ by a stochastic analysis. 

\begin{lemma}\label{multi.lem2}
Let Assumption \ref{ass:dgm} hold. Let $\{x_t, \la_t\}$ and $\{x_t^{(n)}, \la_t^{(n)}\}$ be defined by (\ref{multi.104}) and (\ref{multi.2}) respectively with the initial guess $\la_0 = \la_0^{(n)} = 0$. Then for all integers $t\ge 0$ there hold
$$
\EE\left[\|x_t^{(n)} - x_t\|^2\right] \to 0 \quad \mbox{ and } \quad
\EE\left[\|\la_t^{(n)} - \la_t\|^2\right] \to 0
$$
as $n\to \infty$.
\end{lemma}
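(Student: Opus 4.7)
The plan is to prove both convergence statements simultaneously by induction on $t \geq 0$, exploiting the fact that $x_t^{(n)}$ is a Lipschitz image of $A^*\la_t^{(n)}$ under $\nabla \R^*$ (Proposition \ref{dgm.prop22}) so that $L^2$-control on the dual variables propagates to $L^2$-control on the primal variables, and vice versa through the update rule.

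\emph{Base case.} Since $\la_0 = \la_0^{(n)} = 0$, Proposition \ref{dgm:prop23} gives $x_0 = x_0^{(n)} = \nabla\R^*(0)$, so both expectations are identically zero.

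\emph{Inductive step.} Assume $\EE[\|\la_t^{(n)} - \la_t\|^2] \to 0$. Using $x_t = \nabla\R^*(A^*\la_t)$, $x_t^{(n)} = \nabla\R^*(A^*\la_t^{(n)})$ and the Lipschitz bound in Proposition \ref{dgm.prop22},
\begin{align*}
\|x_t^{(n)} - x_t\| \le \frac{1}{2c_0}\|A^*\la_t^{(n)} - A^*\la_t\| \le \frac{\|A\|}{2c_0}\|\la_t^{(n)} - \la_t\|,
\end{align*}
so squaring and taking expectation yields $\EE[\|x_t^{(n)} - x_t\|^2] \to 0$. For the dual update, subtracting (\ref{multi.104}) from (\ref{multi.2}) gives
\begin{align*}
\la_{t+1}^{(n)} - \la_{t+1} = (\la_t^{(n)} - \la_t) - \gamma A(x_t^{(n)} - x_t) + \gamma (\bar y^{(n)} - y),
\end{align*}
hence, with $C := 1 + \gamma\|A\|^2/(2c_0)$,
\begin{align*}
\|\la_{t+1}^{(n)} - \la_{t+1}\| \le C\|\la_t^{(n)} - \la_t\| + \gamma \|\bar y^{(n)} - y\|.
\end{align*}
Applying $(a+b)^2 \le 2a^2 + 2b^2$, taking expectation, and invoking the variance identity (\ref{variance}) gives
\begin{align*}
\EE\bigl[\|\la_{t+1}^{(n)} - \la_{t+1}\|^2\bigr] \le 2C^2 \EE\bigl[\|\la_t^{(n)} - \la_t\|^2\bigr] + \frac{2\gamma^2 \sigma^2}{n},
\end{align*}
and both terms tend to $0$ by the inductive hypothesis.

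There is no serious obstacle here: because we fix $t$ and let $n \to \infty$, the factor $C^t$ that accumulates along the recursion stays bounded, and the stochastic input enters only through $\bar y^{(n)} - y$, whose second moment is $\sigma^2/n$. The only point that needs a little care is to ensure that the Lipschitz estimate is applied at each $t$ before stepping from $t$ to $t+1$, so that the primal convergence at stage $t$ is in place when the dual recursion is analyzed.
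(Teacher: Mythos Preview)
Your proposal is correct and follows essentially the same approach as the paper: induction on $t$, the Lipschitz bound from Proposition~\ref{dgm.prop22} to pass between primal and dual variables, and the variance identity (\ref{variance}) to kill the noise term. The only cosmetic difference is that you first absorb the primal increment into the dual one via the Lipschitz estimate and then apply $(a+b)^2\le 2a^2+2b^2$, whereas the paper keeps the three terms separate and uses $(a+b+c)^2\le 3(a^2+b^2+c^2)$; this does not change the argument.
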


\begin{proof}
We will use an induction argument. Since $\la_0^{(n)} = \la_0 = 0$, we have
$x_0^{(n)} = \nabla \R^*(A^*\la_0^{(n)}) = \nabla \R^*(A^*\la_0) = x_0$. Thus the result is true for $t =0$. Now we assume that the result is true for some $t\ge 0$.
Then, by the definition of $\la_{t+1}^{(n)}$ and $\la_{t+1}$, we have
$$
\|\la_{t+1}^{(n)} - \la_{t+1}\|^2 = \|(\la_t^{(n)} - \la_t) - \gamma A (x_t^{(n)} - x_t) + \gamma (\bar y^{(n)} - y)\|^2.
$$
By using the inequality $(a+b+c)^2 \le 3 (a^2 + b^2 + c^2)$, taking the expectation, and using (\ref{variance}),  we can obtain
\begin{align*}
 & \EE\left[\|\la_{t+1}^{(n)} - \la_{t+1}\|^2\right] \\
 & \le 3 \EE\left[\|\la_t^{(n)} - \la_t\|^2\right] + 3\gamma^2 \EE\left[\|A (x_t^{(n)} - x_t)\|^2\right]  + 3\gamma^2\EE\left[\|\bar y^{(n)} - y\|^2\right] \\
 & \le 3 \EE\left[\|\la_t^{(n)} - \la_t\|^2\right] + 3\gamma^2\|A\|^2 \EE\left[\| x_t^{(n)} - x_t\|^2\right]  + \frac{3\gamma^2\sigma^2}{n}.
\end{align*}
Thus, by the induction hypothesis we can conclude that $\EE[\|\la_{t+1}^{(n)} - \la_{t+1}\|^2] \to 0$ as $n \to \infty$. Note that
$$
x_{t+1}^{(n)} = \nabla \R^*(A^* \la_{t+1}^{(n)}) \quad \mbox{ and } \quad
x_{t+1} = \nabla \R^*(A^* \la_{t+1}).
$$
By Proposition \ref{dgm.prop22} we have
\begin{align*}
\EE\left[\|x_{t+1}^{(n)} - x_{t+1}\|^2\right]
& = \EE\left[\|\nabla \R^*(A^*\la_{t+1}^{(n)}) - \nabla \R^*(A^* \la_{t+1})\|^2\right] \\
& \le \frac{1}{4 c_0^2} \EE\left[\|A^*(\la_{t+1}^{(n)} - \la_{t+1})\|^2 \right] \\
& \le \frac{\|A\|^2}{4 c_0^2} \EE\left[\|\la_{t+1}^{(n)} - \la_{t+1}\|^2 \right]\\
& \to 0
\end{align*}
as $n \to \infty$. The proof is complete.
\end{proof}

\begin{lemma}\label{multi.lem3}
Let Assumption \ref{ass:dgm} hold. Consider the method (\ref{multi.2}) with $\la_0^{(n)} = 0$ and assume that $0<\gamma < 4 c_0/\|A\|^2$. Let $\Delta_t^{(n)}:= D_\R^{A^*\la_t^{(n)}}(x^\dag, x_t^{(n)})$. Then
$$
\EE\left[\Delta_{t+1}^{(n)}\right] - \EE\left[\Delta_t^{(n)}\right] \le C_1 \frac{\sigma^2}{n}
$$
for all integers $t \ge 0$, where $C_1 := c_0 \gamma/(4 c_0 - \gamma \|A\|^2)$.
\end{lemma}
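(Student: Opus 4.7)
The plan is to derive a deterministic per-step inequality $\Delta_{t+1}^{(n)} - \Delta_t^{(n)} \le C_1 \|\bar y^{(n)} - y\|^2$ and then take expectation to invoke (\ref{variance}). To begin, I would expand the difference using the definition of the Bregman distance. After cancelling the common $\R(x^\dag)$ terms and regrouping the duality pairings on the two endpoints, the difference simplifies to
\begin{align*}
\Delta_{t+1}^{(n)} - \Delta_t^{(n)} = \R(x_t^{(n)}) - \R(x_{t+1}^{(n)}) + \l A^*(\la_{t+1}^{(n)} - \la_t^{(n)}), x_{t+1}^{(n)} - x^\dag\r + \l A^*\la_t^{(n)}, x_{t+1}^{(n)} - x_t^{(n)}\r.
\end{align*}
Since $A^*\la_t^{(n)} \in \p\R(x_t^{(n)})$, strong convexity of $\R$ (property (\ref{dgm.24}) applied in the reverse direction, i.e.\ the subgradient inequality plus the quadratic lower bound) yields
\begin{align*}
\R(x_t^{(n)}) - \R(x_{t+1}^{(n)}) \le -\l A^*\la_t^{(n)}, x_{t+1}^{(n)} - x_t^{(n)}\r - c_0 \|x_{t+1}^{(n)} - x_t^{(n)}\|^2,
\end{align*}
which cancels the third term above and leaves
\begin{align*}
\Delta_{t+1}^{(n)} - \Delta_t^{(n)} \le \l A^*(\la_{t+1}^{(n)} - \la_t^{(n)}), x_{t+1}^{(n)} - x^\dag\r - c_0 \|x_{t+1}^{(n)} - x_t^{(n)}\|^2.
\end{align*}

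Next I would substitute the update rule $\la_{t+1}^{(n)} - \la_t^{(n)} = -\gamma(A x_t^{(n)} - \bar y^{(n)})$, writing $r := A x_t^{(n)} - \bar y^{(n)}$ for brevity. The inner product becomes $-\gamma\l r, A x_{t+1}^{(n)} - y\r$, and the decomposition
\begin{align*}
A x_{t+1}^{(n)} - y = A(x_{t+1}^{(n)} - x_t^{(n)}) + r + (\bar y^{(n)} - y)
\end{align*}
produces three pieces. The middle piece yields a clean $-\gamma\|r\|^2$, giving the dissipation I need. The displacement piece $-\gamma\l r, A(x_{t+1}^{(n)} - x_t^{(n)})\r$ is then absorbed against the $-c_0\|x_{t+1}^{(n)} - x_t^{(n)}\|^2$ reserve via the Young-type estimate $|\l A^*r, x_t^{(n)} - x_{t+1}^{(n)}\r| \le \tfrac{\|A\|^2}{4c_0}\|r\|^2 + c_0\|x_t^{(n)} - x_{t+1}^{(n)}\|^2$, leaving
\begin{align*}
\Delta_{t+1}^{(n)} - \Delta_t^{(n)} \le -\gamma\beta\|r\|^2 - \gamma\l r, \bar y^{(n)} - y\r, \qquad \beta := 1 - \tfrac{\gamma\|A\|^2}{4c_0} > 0,
\end{align*}
where positivity of $\beta$ comes from the hypothesis $\gamma < 4c_0/\|A\|^2$.

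Finally, a second application of Young's inequality $\gamma|\l r, \bar y^{(n)} - y\r| \le \gamma\beta\|r\|^2 + \tfrac{\gamma}{4\beta}\|\bar y^{(n)} - y\|^2$ absorbs the last cross term against the remaining $-\gamma\beta\|r\|^2$, producing the pathwise bound $\Delta_{t+1}^{(n)} - \Delta_t^{(n)} \le \tfrac{\gamma}{4\beta}\|\bar y^{(n)} - y\|^2$. A direct calculation shows $\gamma/(4\beta) = c_0\gamma/(4c_0 - \gamma\|A\|^2) = C_1$, so taking expectations and invoking (\ref{variance}) gives the desired estimate. I do not anticipate a real obstacle here; the only nonroutine choice is the correct balance in the two Young's inequalities, namely matching the coefficients so that $c_0\|x_{t+1}^{(n)} - x_t^{(n)}\|^2$ is exactly absorbed in the first step and $\gamma\beta\|r\|^2$ is exactly absorbed in the second, which is what forces the constant $C_1$ to have the stated form.
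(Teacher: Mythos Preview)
Your argument is correct and follows the same overall strategy as the paper: derive a pathwise inequality $\Delta_{t+1}^{(n)}-\Delta_t^{(n)}\le C_1\|\bar y^{(n)}-y\|^2$ and then take expectation using (\ref{variance}). The only technical difference is where the convexity is invoked. The paper passes to the conjugate via the Fenchel identity $\R(x_t^{(n)})+\R^*(A^*\la_t^{(n)})=\l A^*\la_t^{(n)},x_t^{(n)}\r$ and then uses the $\frac{1}{2c_0}$-Lipschitz gradient of $\R^*$ (Proposition~\ref{dgm.prop22}); this produces the pairing $\l A^*(\la_{t+1}^{(n)}-\la_t^{(n)}),\,x_t^{(n)}-x^\dag\r$ directly, so no ``displacement'' term $x_{t+1}^{(n)}-x_t^{(n)}$ appears. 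You stay on the primal side, using the strong convexity of $\R$ at $x_t^{(n)}$; this yields the pairing with $x_{t+1}^{(n)}-x^\dag$ and hence the extra displacement term, which you then remove with a first Young's inequality before the second one handles the noise term. Both routes are dual versions of the same estimate and land on exactly the same constant $C_1$.

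One small presentational point: the first Young's inequality should be applied to $\gamma\l A^*r,\,x_t^{(n)}-x_{t+1}^{(n)}\r$ with the splitting
\[
\gamma\,|\l A^*r,\,x_t^{(n)}-x_{t+1}^{(n)}\r|\ \le\ \frac{\gamma^2\|A\|^2}{4c_0}\|r\|^2\ +\ c_0\|x_t^{(n)}-x_{t+1}^{(n)}\|^2,
\]
so that the $c_0\|x_{t+1}^{(n)}-x_t^{(n)}\|^2$ reserve is exactly exhausted; this is what gives $\beta=1-\gamma\|A\|^2/(4c_0)$ as you state. As written, your displayed Young bound (without the $\gamma$) multiplied by $\gamma$ would leave a residual $(\gamma-1)c_0\|x_{t+1}^{(n)}-x_t^{(n)}\|^2$. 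This is just a slip in how the inequality is quoted; your stated $\beta$ and the final constant $C_1=\gamma/(4\beta)$ are correct.
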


\begin{proof}
Note that
\begin{align*}
\Delta_{t+1}^{(n)} - \Delta_t^{(n)}
= \left(\l A^* \la_{t+1}^{(n)}, x_{t+1}^{(n)} - x^\dag\r - \R(x_{t+1}^{(n)})\right)
+ \left(\R(x_t^{(n)}) + \l A^* \la_t^{(n)}, x^\dag -x_t^{(n)}\r \right).
\end{align*}
By using the fact $A^* \la_t^{(n)} \in \p \R(x_t^{(n)})$ we have from (\ref{dgm.22}) that
$$
\R(x_t^{(n)}) + \R^*(A^* \la_t^{(n)}) = \l A^* \la_t^{(n)}, x_t^{(n)}\r
$$
for all $t \ge 0$. Therefore
\begin{align*}
\Delta_{t+1}^{(n)} - \Delta_t^{(n)}
= \left(\R^*(A^* \la_{t+1}^{(n)})-\l A^*\la_{t+1}^{(n)}, x^\dag\r\right)
- \left(\R^*(A^*\la_t^{(n)}) - \l A^* \la_t^{(n)}, x^\dag \r \right).
\end{align*}
Since $x_t^{(n)} = \nabla \R^*(A^* \la_t^{(n)})$, by using Proposition \ref{dgm.prop22} we can obtain
\begin{align*}
\Delta_{t+1}^{(n)} - \Delta_t^{(n)}
& = \left( \R^*(A^* \la_{t+1}^{(n)}) - \R^*(A^* \la_{t}^{(n)}) - \l A^*\la_{t+1}^{(n)} -A^* \la_{t}^{(n)}, \nabla \R^*(A^* \la_t^{(n)}) \r \right) \\
& \quad \, + \l A^*\la_{t+1}^{(n)} -A^* \la_{t}^{(n)}, x_t^{(n)} - x^\dag\r \\
& \le \frac{1}{4 c_0} \|A^*\la_{t+1}^{(n)} -A^* \la_{t}^{(n)}\|^2
+ \l A^*\la_{t+1}^{(n)} -A^* \la_{t}^{(n)}, x_t^{(n)} - x^\dag\r.
\end{align*}
According to the definition of $\la_{t+1}^{(n)}$ it is easy to see that
$$
A^*\la_{t+1}^{(n)} - A^* \la_t^{(n)} = - \gamma A^* (A x_t^{(n)} - \bar y^{(n)}).
$$
Therefore
\begin{align*}
\Delta_{t+1}^{(n)} - \Delta_t^{(n)}
& \le \frac{\gamma^2}{4 c_0} \|A^* (A x_t^{(n)} - \bar y^{(n)})\|^2
- \gamma\l A x_{t}^{(n)} - \bar y^{(n)}, A x_t^{(n)} - y\r \\
& \le - \left(1-\frac{\gamma \|A\|^2}{4 c_0}\right) \gamma \|A x_t^{(n)} - \bar y^{(n)}\|^2 
+ \gamma \|\bar y^{(n)} - y\| \|A x_t^{(n)} - \bar y^{(n)}\|. 
\end{align*}
By using the Young's inequality we have 
\begin{align*}
\gamma \|\bar y^{(n)} - y\| \|A x_t^{(n)} - \bar y^{(n)}\| 
\le  \left(1-\frac{\gamma \|A\|^2}{4 c_0}\right) \gamma \|A x_t^{(n)} - \bar y^{(n)}\|^2 
+ C_1 \|\bar y^{(n)} - y\|^2.
\end{align*}
with $C_1$ defined above. Therefore 
\begin{align*}
\Delta_{t+1}^{(n)} - \Delta_t^{(n)}  \le  C_1 \|\bar y^{(n)} - y\|^2.
\end{align*}
By taking the expectation and using (\ref{variance}) we then obtain
\begin{align*}
\EE\left[\Delta_{t+1}^{(n)}\right] - \EE\left[\Delta_t^{(n)}\right]
\le C_1 \EE\left[ \|\bar y^{(n)} - y\|^2\right] = C_1 \frac{\sigma^2}{n}
\end{align*}
which shows the desired inequality.
\end{proof}

\begin{theorem}
Let Assumption \ref{ass:dgm} hold. Consider the method (\ref{multi.2}) with $\la_0^{(n)} = 0$ and assume that $0<\gamma < 4 c_0/\|A\|^2$. If $t_n$ is chosen such that $t_n\to \infty$ and $t_n/n\to 0$ as $n \to \infty$, then
$$
\EE\left[D_\R^{A^*\la_{t_n}^{(n)}}(x^\dag, x_{t_n}^{(n)})\right] \to 0
$$
and hence $\EE\left[\|x_{t_n}^{(n)}-x^\dag\|^2\right] \to 0$ as $n\to \infty$.
\end{theorem}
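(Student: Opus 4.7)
The plan is to combine the stability estimate of Lemma \ref{multi.lem3} with the exact-data convergence of Lemma \ref{multi.lem1} via a truncation argument at some auxiliary index $M$ that is first held fixed while $n\to\infty$, and then sent to infinity. Writing $\Delta_t^{(n)} := D_\R^{A^*\la_t^{(n)}}(x^\dag, x_t^{(n)})$ and $\Delta_t := D_\R^{A^*\la_t}(x^\dag, x_t)$, iterating Lemma \ref{multi.lem3} from index $M$ up to $t_n - 1$ (for any $M\le t_n$) immediately yields
\begin{align*}
\EE\left[\Delta_{t_n}^{(n)}\right] \le \EE\left[\Delta_M^{(n)}\right] + C_1 (t_n - M) \frac{\sigma^2}{n} \le \EE\left[\Delta_M^{(n)}\right] + C_1 \frac{t_n \sigma^2}{n}.
\end{align*}
Since $t_n/n \to 0$, the second term vanishes as $n\to\infty$ for each fixed $M$, so it suffices to analyze the first term.

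Next I would show that, for each fixed $M$, $\EE[\Delta_M^{(n)}] \to \Delta_M$ as $n\to\infty$. Using $A^*\la_M^{(n)} \in \p\R(x_M^{(n)})$ together with the identity in (\ref{dgm.22}), I rewrite
\begin{align*}
\Delta_M^{(n)} = \R(x^\dag) + \R^*\!\left(A^*\la_M^{(n)}\right) - \l A^*\la_M^{(n)}, x^\dag\r,
\end{align*}
and similarly for $\Delta_M$. The linear term is handled by Cauchy--Schwarz and Lemma \ref{multi.lem2}: $\EE|\l A^*(\la_M^{(n)}-\la_M), x^\dag\r| \le \|A\|\|x^\dag\|(\EE\|\la_M^{(n)}-\la_M\|^2)^{1/2} \to 0$. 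For the nonlinear $\R^*$ term, Proposition \ref{dgm.prop22} gives that $\nabla\R^*$ is $1/(2c_0)$-Lipschitz, so the standard descent estimate yields
\begin{align*}
\left| \R^*(\xi) - \R^*(\eta) \right| \le \|\nabla\R^*(\eta)\|\,\|\xi-\eta\| + \frac{1}{4c_0}\|\xi-\eta\|^2
\end{align*}
for every $\xi,\eta\in\X^*$. Applied with $\xi = A^*\la_M^{(n)}$ and $\eta = A^*\la_M$, taking expectations and using Lemma \ref{multi.lem2} once more gives $\EE|\R^*(A^*\la_M^{(n)}) - \R^*(A^*\la_M)| \to 0$. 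Hence $\EE[\Delta_M^{(n)}] \to \Delta_M$.

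Combining the two steps, for each fixed $M$,
\begin{align*}
\limsup_{n\to\infty} \EE\left[\Delta_{t_n}^{(n)}\right] \le \Delta_M.
\end{align*}
By Lemma \ref{multi.lem1}, the right-hand side tends to $0$ as $M\to\infty$ (which is legitimate because $t_n\to\infty$ means $M\le t_n$ eventually, for any fixed $M$). Since $\Delta_{t_n}^{(n)}\ge 0$, this proves $\EE[\Delta_{t_n}^{(n)}]\to 0$. The norm convergence then follows at once from the strong convexity bound (\ref{dgm.24}), namely $\|x_{t_n}^{(n)} - x^\dag\|^2 \le c_0^{-1}\Delta_{t_n}^{(n)}$, and taking expectations.

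The only delicate step is the middle one: passing $\EE[\Delta_M^{(n)}] \to \Delta_M$ from $L^2$ convergence of the dual iterates. This is where the Lipschitz continuity of $\nabla\R^*$ from Proposition \ref{dgm.prop22} is essential, since it prevents the nonlinear term $\R^*(A^*\la_M^{(n)})$ from behaving pathologically under the expectation despite the fact that $\R$ itself is only lower semi-continuous. Everything else is a bookkeeping application of the lemmas already proved.
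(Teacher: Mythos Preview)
Your proof is correct and follows the same overall architecture as the paper's: iterate Lemma \ref{multi.lem3} from a fixed truncation index, kill the propagation term via $t_n/n\to 0$, show that the Bregman distance at the fixed index converges to its exact-data counterpart by Lemma \ref{multi.lem2}, and finish with Lemma \ref{multi.lem1}.

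The one genuine difference is in the middle step. The paper expands $\Delta_t^{(n)}$ in terms of $\R(x_t^{(n)})$ and, because $\R$ is only lower semi-continuous, can only obtain the one-sided bound $\R(x_t)\le \liminf_n \EE[\R(x_t^{(n)})]$ via a subsequence extraction and Fatou's lemma; this yields $\limsup_n \EE[\Delta_t^{(n)}] \le \Delta_t$, which is all that is needed. You instead pass to the dual representation $\Delta_M^{(n)} = \R(x^\dag) + \R^*(A^*\la_M^{(n)}) - \l A^*\la_M^{(n)}, x^\dag\r$ and exploit the $C^{1,1}$ smoothness of $\R^*$ furnished by Proposition \ref{dgm.prop22} to get a two-sided quantitative bound, hence the full limit $\EE[\Delta_M^{(n)}]\to \Delta_M$. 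Your route is cleaner---it avoids the subsequence/Fatou detour and uses only tools already in play---at the cost of invoking the descent lemma for $\R^*$, which is a standard consequence of the Lipschitz gradient. Either way the conclusion is the same.
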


\begin{proof}
By the strong convexity of $\R$ it suffices to show $\EE[\Delta_{t_n}^{(n)}] \to 0$ as $n\to \infty$. Let $t\ge 0 $ be any fixed integer. Since $t_n \to \infty$ as $n\to \infty$, we have $t_n >t$ for large $n$. Thus, we may repeatedly use Lemma \ref{multi.lem3} to obtain
$$
\EE\left[\Delta_{t_n}^{(n)}\right] \le \EE\left[\Delta_t^{(n)}\right] + C_1(t_n-t)\frac{\sigma^2}{n}. $$
Since $t_n/n\to 0$ as $n\to \infty$, we thus have
\begin{align}\label{multi.3}
\limsup_{n\to \infty} \EE\left[\Delta_{t_n}^{(n)}\right]
\le \limsup_{n\to \infty} \EE\left[\Delta_t^{(n)}\right]
\end{align}
for any $t\ge 0$. Note that
\begin{align}\label{multi.37}
\EE\left[\Delta_t^{(n)}\right] - D_{\R}^{A^*\la_t}(x^\dag, x_t) & = \left( \l A^* \la_t, x^\dag -x_t\r - \EE\left[\l A^*\la_t^{(n)}, x^\dag -x_t^{(n)}\r\right] \right) \nonumber\\
& \quad \, + \left(\R(x_t)-\EE\left[\R(x_t^{(n)})\right]\right) .
\end{align}
With the help of the Cauchy-Schwarz inequality we have
\begin{align*}
& \left|\EE\left[\l A^*\la_t^{(n)}, x^\dag -x_t^{(n)}\r\right] - \l A^* \la_t, x^\dag -x_t\r \right| \\
& \le \left|\EE\left[\l A^*(\la_t^{(n)}-\la_t), x^\dag -x_t^{(n)}\r\right] \right|
+ \left|\EE\left[\l A^* \la_t, x_t - x_t^{(n)}\r\right]\right| \\
& \le \|A\| \left(\EE\left[\|\la_t^{(n)}-\la_t\|^2\right]\right)^{1/2} \left(\EE\left[\|x^\dag -x_t^{(n)}\|^2\right] \right)^{1/2} \\
& \quad \, + \|A^* \la_t\| \left(\EE\left[\|x_t - x_t^{(n)}\|^2\right]\right)^{1/2}. \end{align*}
Thus we may use Lemma \ref{multi.lem2} to conclude
\begin{align}\label{multi.4}
\lim_{n\to \infty} \EE\left[\l A^*\la_t^{(n)}, x^\dag -x_t^{(n)}\r\right]
= \l A^* \la_t, x^\dag -x_t\r.
\end{align}
Next we will show that
\begin{align}\label{multi.5}
\R(x_t) \le \liminf_{n\to \infty} \EE\left[\R(x_t^{(n)})\right].
\end{align}
To see this, we take a subsequence $\{n_k\}$ with $n_k \to \infty$ as $k\to \infty$
such that
$$
\lim_{k\to \infty} \EE\left[\R(x_t^{(n_k)})\right]
= \liminf_{n\to \infty} \EE\left[\R(x_t^{(n)})\right].
$$
According to Lemma \ref{multi.lem2} we have $\EE[\|x_t^{(n_k)}-x_t\|^2] \to 0$ as $k \to \infty$. By taking a subsequence of $\{n_k\}$ if necessary, we can guarantee $\|x_t^{(n_k)} -x_t\|\to 0$ as $k\to \infty$ almost surely. Thus, from the lower semi-continuity of $\R$ and Fatou's lemma it follows
\begin{align*}
\R(x_t) & = \EE[\R(x_t)] \le \EE\left[\liminf_{k\to \infty} \R(x_t^{(n_k)})\right] \le \liminf_{k\to \infty} \EE\left[\R(x_t^{(n_k)})\right] \\
& = \lim_{k\to \infty} \EE\left[\R(x_t^{(n_k)})\right]
= \liminf_{n\to \infty} \EE\left[\R(x_t^{(n)})\right]
\end{align*}
which shows (\ref{multi.5}). Consequently it follows from (\ref{multi.37}), (\ref{multi.4}) and (\ref{multi.5}) that
$$
\limsup_{n\to \infty} \EE\left[\Delta_t^{(n)}\right] \le D_{\R}^{A^*\la_t}(x^\dag, x_t).
$$
Therefore we may use (\ref{multi.3}) to obtain
\begin{align*}
\limsup_{n\to \infty} \EE\left[\Delta_{t_n}^{(n)}\right]
\le D_{\R}^{A^*\la_t}(x^\dag, x_t)
\end{align*}
for all $t \ge 0$. Letting $t \to \infty$ and using Lemma \ref{multi.lem1} we thus obtain $\EE[\Delta_{t_n}^{(n)}]\to 0$ as $n\to \infty$.
\end{proof}

We next consider deriving convergence rates of the method (\ref{multi.2}) under an {\it a priori} stopping rule when the sought solution $x^\dag$ satisfies the variational source conditions specified in the following assumption.

\begin{assumption}\label{ass:vsc}
For the unique solution $x^\dag$ of (\ref{multi.1}) there is an error measure function $\E^\dag: \emph{dom}(\R) \to [0, \infty)$ with $\E^\dag(x^\dag) =0$ such that
$$
\E^\dag(x) \le \R(x) - \R(x^\dag)  +  M\|Ax -y\|^q, \quad \forall x\in \emph{dom}(\R)
$$
for some $0<q\le 1$ and some constant $M>0$.
\end{assumption}

\begin{remark}
Variational source conditions were first introduced in \cite{HKPS2007}, as a generalization
of the spectral source conditions in Hilbert spaces, to derive convergence rates of Tikhonov regularization in Banach spaces. This kind of source conditions was further generalized and refined subsequently, see \cite{F2018,HM2012,HW2015,HW2017} for instance. The error
measure function $\E^\dag$ in Assumption \ref{ass:vsc} is used to measure the speed of convergence;
the usual choice of $\E^\dag$ is the Bregman distance induced by $\R$.
\end{remark}

\begin{remark}
When $x^\dag$ satisfies the benchmark source condition $A^*\la^\dag \in \p \R(x^\dag)$ for some $\la^\dag \in \Y$, it is straightforward to see that
$$
D_\R^{A^*\la^\dag} (x, x^\dag) \le \R(x) - \R(x^\dag) + \|\la^\dag\| \|A x - y\|, \quad \forall x \in \mbox{dom}(\R)
$$
which shows the variational source condition is satisfied with $\E^\dag(x)= D_\R^{A^*\la^\dag} (x, x^\dag)$, $M = \|\la^\dag\|$ and $q=1$; this is a well-known fact, see \cite{HKPS2007}.
\end{remark}

\begin{remark}
When both $X$ and $Y$ are Hilbert spaces, $\R$ is $c_0$-strongly convex for some constant $c_0>0$, and $x^\dag$ satisfies the source condition
\begin{align}\label{ssc20}
\xi^\dag:=(A^*A)^{\nu/2}\omega \in \p \R(x^\dag)
\end{align}
for some $0<\nu\le 1$ and $\omega \in X$, then the variational source condition in Assumption \ref{ass:vsc} holds with
$$
\E^\dag(x) = \frac{1}{2} D_\R^{\xi^\dag} (x, x^\dag), \quad
M  = C_\nu \|\omega\|^{\frac{2}{1+\nu}} \quad \mbox{and} \quad q = \frac{2\nu}{1+\nu},
$$
where $C_\nu:= \frac{1+\nu}{2} \left(\frac{1-\nu}{c_0}\right)^{\frac{1-\nu}{1+\nu}}$.
Indeed, by the given condition (\ref{ssc20}), we can obtain
\begin{align*}
D_\R^{\xi^\dag}(x, x^\dag)
& = \R(x) - \R(x^\dag) - \l \omega, (A^*A)^{\nu/2} (x-x^\dag)\r \\
& \le \R(x) - \R(x^\dag) + \|\omega\| \|(A^*A)^{\nu/2} (x-x^\dag)\|.
\end{align*}
By the interpolation inequality (\cite{EHN1996}) we then have
$$
D_\R^{\xi^\dag} (x, x^\dag)
\le \R(x) - \R(x^\dag) + \|\omega\| \|x-x^\dag\|^{1-\nu} \|A(x-x^\dag)\|^\nu.
$$
Thus, an application of the Young's inequality gives
\begin{align*}
D_\R^{\xi^\dag} (x, x^\dag)
&\le \R(x) - \R(x^\dag) + \frac{1}{2} c_0 \|x-x^\dag\|^2 + C_\nu \|\omega\|^{\frac{2}{1+\nu}} \|Ax-y\|^{\frac{2\nu}{1+\nu}},
\end{align*}
Finally, by invoking the strong convexity of $\R$ and (\ref{dgm.24}) we can obtain
\begin{align*}
D_\R^{\xi^\dag} (x, x^\dag)
\le \R(x) - \R(x^\dag) + \frac{1}{2} D_\R^{\xi^\dag}(x, x^\dag) + C_\nu  \|\omega\|^{\frac{2}{1+\nu}} \|Ax-y\|^{\frac{2\nu}{1+\nu}}
\end{align*}
which implies the assertion.

It should be pointed out that the source condition (\ref{ssc20}) has been used in \cite{G2020} to derive convergence rates of Tikhonov regularization in Hilbert spaces with non-quadratic penalty terms. We would also like to mention that for the special case $\R(x) = \frac{1}{2} \|x\|^2 + \iota_\C(x)$, where $\iota_\C$ denotes the indicator function of a closed convex set $\C\subset \X$, i.e. $\iota_\C(x) = 0$  if $x\in \C$ and $\infty$ otherwise, the source condition (\ref{ssc20}) becomes the projected spectral source condition
$$
x^\dag = P_\C((A^*A)^{\nu/2} \omega)
$$
for which it has been shown in \cite{Jin2022} that the projected source condition implies the variational source condition. Here $P_\C$ denotes the metric projection of $\X$ onto $\C$.
\end{remark}

\begin{remark}
The variational source conditions have been verified for various concrete inverse problems, see \cite{CJYZ2022,CY2019,HW2015,HW2017a} for instance.
\end{remark}

\begin{theorem}\label{multi.thm2}
Let Assumption \ref{ass:dgm} hold and consider the method (\ref{multi.2}) with $\la_0^{(n)} = 0$. Assume that $0<\gamma < 2c_0/\|A\|^2$ and that $x^\dag$ satisfies Assumption \ref{ass:vsc}. If $t_n$ is chosen such that $t_n \sim (n/\sigma^2)^{\frac{2-q}{2}}$, then
$$
\EE\left[\E^\dag(x_{t_n}^{(n)})\right] = O\left(\left(\frac{\sigma^2}{n}\right)^{q/2}\right).
$$
\end{theorem}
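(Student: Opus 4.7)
The plan is to combine the source condition of Assumption \ref{ass:vsc} with the master inequality of Lemma \ref{multi.lem6}. Two ingredients are needed: first, a good bound on $\eta_t$; second, a way to convert the quantitative control in Lemma \ref{multi.lem6} into an estimate of $\E^\dag(x_t^{(n)})$.

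For the first ingredient, Assumption \ref{ass:vsc} implies $\R(x^\dag) - \R(x) \le M\|Ax - y\|^q$ for every $x\in\mbox{dom}(\R)$, so substituting into the definition of $\eta_t$ and optimizing the resulting one-dimensional function $Ms^q - \tfrac{1}{3}\gamma(t+1)s^2$ in $s=\|Ax-y\|$ yields
$$\eta_t \;\le\; C_q\, M^{2/(2-q)}\,(\gamma(t+1))^{-q/(2-q)}.$$
Plugging this into Lemma \ref{multi.lem6}, taking expectations via (\ref{variance}), and using the triangle inequality $\|Ax_t^{(n)}-y\|^2 \le 2\|Ax_t^{(n)}-\bar y^{(n)}\|^2 + 2\|\bar y^{(n)}-y\|^2$, I would obtain
$$\EE[\|A x_t^{(n)} - y\|^2] \,\lesssim\, (\gamma(t+1))^{-2/(2-q)} + \sigma^2/n, \qquad \EE[\|\la_{t+1}^{(n)}\|^2] \,\lesssim\, (\gamma(t+1))^{(2-2q)/(2-q)} + (\gamma(t+1))^2\sigma^2/n.$$

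For the second ingredient, I would apply Assumption \ref{ass:vsc} at $x=x_t^{(n)}$, reducing the task to controlling $\R(x_t^{(n)}) - \R(x^\dag)$. Since $A^*\la_t^{(n)} \in \p\R(x_t^{(n)})$, the subgradient inequality gives
$$\R(x_t^{(n)}) - \R(x^\dag) \le \l A^*\la_t^{(n)}, x_t^{(n)} - x^\dag\r = \l \la_t^{(n)}, A x_t^{(n)} - y\r \le \|\la_t^{(n)}\|\,\|A x_t^{(n)} - y\|.$$
Taking expectations, Cauchy--Schwarz for $\EE$ handles the bilinear term and Jensen's inequality (valid since $q\le 2$) handles $\EE[\|Ax_t^{(n)} - y\|^q]$, producing
$$\EE[\E^\dag(x_t^{(n)})] \;\le\; \sqrt{\EE[\|\la_t^{(n)}\|^2]\,\EE[\|A x_t^{(n)} - y\|^2]} \;+\; M\bigl(\EE[\|A x_t^{(n)} - y\|^2]\bigr)^{q/2}.$$

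Finally, one picks $t_n$ so that $\gamma(t_n+1) \sim (n/\sigma^2)^{(2-q)/2}$. A direct computation with the bounds above shows that both summands in $\EE[\|A x_{t_n}^{(n)} - y\|^2]$ are then of order $\sigma^2/n$, while both summands in $\EE[\|\la_{t_n}^{(n)}\|^2]$ are of order $(n/\sigma^2)^{1-q}$; thus both terms on the right of the previous display are of order $(\sigma^2/n)^{q/2}$, yielding the theorem. The main obstacle is the estimate on $\R(x_t^{(n)}) - \R(x^\dag)$: this quantity is not a priori nonnegative and is not directly accessible from Lemma \ref{multi.lem6}, and the subgradient identity is the key maneuver that converts it into a product of two terms that Lemma \ref{multi.lem6} \emph{does} control. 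The exponent $(2-q)/2$ in the choice of $t_n$ is precisely the one that balances the growth of $\|\la_{t_n}^{(n)}\|$ against the decay of $\|Ax_{t_n}^{(n)} - y\|$.
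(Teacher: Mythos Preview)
Your proposal is correct and follows essentially the same route as the paper: bound $\eta_t$ by a one-variable optimization using Assumption \ref{ass:vsc}, feed this into Lemma \ref{multi.lem6} to control $\EE[\|Ax_t^{(n)}-y\|^2]$ and $\EE[\|\la_{t+1}^{(n)}\|^2]$, then invoke the subgradient inclusion $A^*\la_t^{(n)}\in\p\R(x_t^{(n)})$ together with Cauchy--Schwarz and Jensen to estimate $\EE[\E^\dag(x_{t_n}^{(n)})]$. The only cosmetic point is a harmless index shift (Lemma \ref{multi.lem6} bounds $\|\la_{t+1}^{(n)}\|$, so apply it at $t=t_n-1$ to get the bound on $\|\la_{t_n}^{(n)}\|$).
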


\begin{proof}
From Lemma \ref{multi.lem6} and (\ref{variance}) it follows that
\begin{align*}
& c \gamma (t+1) \EE\left[\|A x_t^{(n)} -\bar y^{(n)}\|^2\right] + \frac{1}{8\gamma(t+1)}\EE\left[\|\la_{t+1}^{(n)}\|^2\right] \\
& \le \eta_t  + \gamma(t+1) \EE\left[\|\bar y^{(n)}-y\|^2\right]
= \eta_t + \gamma (t+1) \frac{\sigma^2}{n}.
\end{align*}
By using the variational source condition and the nonnegativity of $\E^\dag$, we have
$$
0\le \R(x) - \R(x^\dag) + M \|A x - y\|^q, \quad \forall x \in \X.
$$
Therefore
\begin{align}\label{multi.23}
\eta_t & \le \sup_{x\in \X} \left\{M \|Ax-y\|^q - \frac{1}{3}\gamma (t+1) \|A x-y\|^2\right\} \nonumber\\
& \le \sup_{s\ge 0} \left\{M s^q - \frac{1}{3} \gamma (t+1) s^2 \right\}
= c_2 (t+1)^{-\frac{q}{2-q}},
\end{align}
where $c_2 := (1-\frac{q}{2}) \left(\frac{3qM}{2\gamma}\right)^{\frac{q}{2-q}} M$. Consequently
\begin{align*}
& c \gamma (t+1) \EE\left[\|A x_t^{(n)} -\bar y^{(n)}\|^2\right] + \frac{1}{8\gamma(t+1)}\EE\left[\|\la_{t+1}^{(n)}\|^2\right] \\
& \le c_2 (t+1)^{-\frac{q}{2-q}} + \gamma (t+1) \frac{\sigma^2}{n}.
\end{align*}
which shows that
\begin{align*}
& \EE\left[\|A x_t^{(n)}-\bar y^{(n)}\|^2\right] \le C\left( (t+1)^{-\frac{2}{2-q}} + \frac{\sigma^2}{n}\right),\\
& \EE\left[\|\la_{t+1}^{(n)}\|^2\right] \le C\left( (t+1)^{\frac{2(1-q)}{2-q}} + (t+1)^2 \frac{\sigma^2}{n}\right),
\end{align*}
where here and below we use $C$ to denote a generic constant independent of $t$, $n$ and $\sigma$.
Since $t_n$ is chosen such that $t_n\sim (n/\sigma^2)^{\frac{2-q}{2}}$, we have
\begin{align*}
\EE\left[\|A x_{t_n}^{(n)}-\bar y^{(n)}\|^2\right] \le C \frac{\sigma^2}{n} \quad \mbox{and} \quad \EE\left[\|\la_{t_n}^{(n)}\|^2\right] \le C \left(\frac{n}{\sigma^2}\right)^{1-q}.
\end{align*}
The first estimate and (\ref{variance}) in particular imply
$$
\EE\left[\|A x_{t_n}^{(n)} - y\|^2 \right]
\le 2 \EE\left[\|A x_{t_n}^{(n)} - \bar y^{(n)}\|^2 \right]
+ 2 \EE\left[\|\bar y^{(n)} - y\|^2\right] \le C \frac{\sigma^2}{n}.
$$

Now we are ready to complete the proof. By using the variational source condition on $x^\dag$, the convexity of $\R$, and the fact $A^*\la_{t_n}^{(n)} \in \p \R(x_{t_n}^{(n)})$ we have
\begin{align}\label{multi.24}
\E^\dag(x_{t_n}^{(n)})
& \le \R(x_{t_n}^{(n)}) - \R(x^\dag) + M\|A x_{t_n}^{(n)} -y\|^q  \nonumber \\
& \le \l A^*\la_{t_n}^{(n)}, x_{t_n}^{(n)}-x^\dag\r + M\|A x_{t_n}^{(n)}-y\|^q \nonumber\\
& = \l \la_{t_n}^{(n)}, A x_{t_n}^{(n)}-y\r + M\|A x_{t_n}^{(n)}-y\|^q \nonumber \\
& \le \|\la_{t_n}^{(n)}\| \|A x_{t_n}^{(n)}-y\| + M\|A x_{t_n}^{(n)}-y\|^q.
\end{align}
Therefore
\begin{align*}
\EE\left[\E^\dag(x_{t_n}^{(n)})\right]
& \le \EE\left[\|\la_{t_n}^{(n)}\| \|A x_{t_n}^{(n)}-y\|\right]
+ M \EE\left[\|A x_{t_n}^{(n)}-y\|^q\right]\\
& \le\left(\EE\left[\|\la_{t_n}^{(n)}\|^2\right]\right)^{1/2}
\left(\EE\left[\|A x_{t_n}^{(n)}-y\|^2\right]\right)^{1/2} \\
& \quad \, + M \left(\EE\left[\|A x_{t_n}^{(n)}-y\|^2\right]\right)^{q/2} \\
& \le C \left(\frac{\sigma^2}{n}\right)^{q/2}.
\end{align*}
The proof is thus complete.
\end{proof}

\subsection{\bf Convergence analysis under a posteriori stopping rule}

We next consider the dual gradient method (\ref{multi.2}) terminated by an {\it a posteriori} stopping rule. The discrepancy principle is one of the most prominent rule that has been studied extensively. In this stopping rule, one needs the information on the noise level $\|\bar y^{(n)}- y\|$. Since $y$ is unknown, we can not use this quantity directly. Recall that
$
\EE\left[\|\bar y^{(n)} - y\|^2 \right] = \sigma^2/n.
$
To get an estimate on $\sigma^2$, we consider the square root of the sample variance
$$
s_n:=\sqrt{\frac{1}{n-1} \sum_{i=1}^n \|y_i - \bar y^{(n)}\|^2}
$$
for which it is known that $\EE[s_n^2] = \sigma^2$. Therefore, we may use $s_n/\sqrt{n}$ as an estimator of $\|\bar y^{(n)} - y\|$. This leads us to propose the following stopping rule which is 
a statistical variant of the discrepancy principle.

\begin{Rule}\label{Rule:MDRM}
Let $\beta_0>0$ be a given number. We define $t_n$ to be the first integer such that $t_n \le \beta_0 n$ and
\begin{align}\label{multi.8}
\|A x_{t_n}^{(n)} - \bar y^{(n)}\| \le \tau_n \frac{s_n}{\sqrt{n}},
\end{align}
where $\tau_n> 1$ is a number that may depend on $n$; if there is no such an integer $t_n$ satisfying (\ref{multi.8}), we take $t_n = \beta_0 n$.
\end{Rule}

Recall that we have used $s_n/\sqrt{n}$ as an estimator of $\|\bar y^{(n)}-y\|$. In case $s_n/\sqrt{n}$ is an underestimated estimator, (\ref{multi.8}) may not be satisfied at a right number of iterations; continuing the iterations until (\ref{multi.8}) holds may result in a bad reconstruction result. Therefore, the requirement $t_n \le \beta_0 n$ serves as an emergency stop.
It should be emphasized that $t_n$ is a random variable instead of a deterministic quantity;
this is a key difference between Rule \ref{Rule:MDRM} and the classical discrepancy principle.

According to the spirit of the discrepancy principle, it is suggestive to take $\tau_n>1$ in Rule \ref{Rule:MDRM} to be a number close to $1$.
Due to the technical reasons, however, in the forthcoming convergence analysis, the number $\tau_n$ in Rule \ref{Rule:MDRM} is required to tend to $\infty$ as $n\to \infty$ but does not go to $\infty$ too fast; specifically we require
\begin{align}\label{taun}
\tau_n \to \infty \quad \mbox{ and } \quad \tau_n/\sqrt{n} \to 0 \quad \mbox{ as } n\to \infty.
\end{align}
For instance, we may take
\begin{align}\label{taun2}
\tau_n := \max\{\tau_0, \log|\log|\log n||\},
\end{align}
where $\tau_0>1$ is a fixed number.
This requirement on $\tau_n$ is imposed to guarantee the existence of an event $\Omega_n$ with $\P(\Omega_n) \to 1$ as $n\to \infty$ on which we can perform the convergence analysis. Note that
$$
s_n^2 = \frac{n}{n-1} \left(\frac{1}{n} \sum_{i=1}^n \|y_i\|^2 - \|\bar y^{(n)}\|^2\right).
$$
Since $y_1, y_2, \cdots$ is a sequence of independent identically distributed random variables, by the strong law of large numbers (\cite[Corollary 7.10]{LT1991}) we have
$$
\|\bar y^{(n)}\|^2 \to \|y\|^2  \quad \mbox{ and } \quad
\frac{1}{n} \sum_{i=1}^n \|y_i\|^2 \to \EE\left[\|y_1\|^2\right]
$$
as $n\to \infty$ almost surely. Therefore
$$
s_n^2 \to \EE[\|y_1\|^2] - \|y\|^2 = \sigma^2
$$
and hence $s_n \to \sigma$ as $n\to \infty$ almost surely. Since almost sure convergence implies convergence in probability, we have
\begin{align}\label{multi.97}
\P\left(|s_n-\sigma|\le \frac{\sigma}{2}\right) \to 1
\quad \mbox{as } n\to \infty.
\end{align}
Now we define the event
$$
\Omega_n :=\left\{|s_n-\sigma|\le \frac{\sigma}{2}, \, \|\bar y^{(n)}-y\| \le \tau_n \sqrt{\frac{cs_n^2}{2n}} \right\},
$$
where $c>0$ is the constant appearing in Lemma \ref{multi.lem6}. We claim that
\begin{align}\label{multi.98}
\P(\Omega_n) \to 1 \quad \mbox{ as } n \to \infty.
\end{align}
Indeed, by the Markov's inequality, (\ref{variance}),  and the assumption $\tau_n \to \infty$, we have
\begin{align*}
& \P\left(|s_n-\sigma|\le \frac{\sigma}{2}, \,
\|\bar y^{(n)} -y\|>\tau_n \sqrt{\frac{c s_n^2}{2n}}\right) \\
& \le \P\left(\|\bar y^{(n)} - y\|>\sigma \tau_n \sqrt{\frac{c}{8n}}\right)
\le \frac{8n}{c \sigma^2 \tau_n^2} \EE\left[\|\bar y^{(n)}-y\|^2\right] = \frac{8}{c\tau_n^2} \to 0
\end{align*}
as $n\to \infty$. Therefore, by using (\ref{multi.97}), we have
\begin{align*}
\P(\Omega_n) = \P\left(|s_n-\sigma|\le \frac{\sigma}{2}\right) - \P\left(|s_n-\sigma|\le \frac{\sigma}{2}, \,
\|\bar y^{(n)} -y\|>\tau_n \sqrt{\frac{c s_n^2}{2n}}\right) \to 1
\end{align*}
as $n \to \infty$.

The following result gives an upper bound estimate of $t_n$ on $\Omega_n$ defined by Rule \ref{Rule:MDRM}.

\begin{lemma}\label{multi.lem9}
Let Assumption \ref{ass:dgm} hold and consider the method (\ref{multi.2}) with $\la_0^{(n)}=0$. Assume that $0<\gamma <2c_0/\|A\|^2$. Let $t_n$ be defined by Rule \ref{Rule:MDRM} with $\tau_n$ satisfying (\ref{taun}) and set $\phi(n):= \frac{\sigma \tau_n}{2\sqrt{n}}$. Then for any $\ep>0$ there is an integer $n_\ep$ such that
$$
t_n \phi(n)^2 \le \ep \quad \mbox{ on } \Omega_n
$$
for all $n\ge n_\ep$.
\end{lemma}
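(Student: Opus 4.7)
The plan is to exploit the defining property of $t_n$, which forces the residual to stay above the discrepancy threshold at every smaller index, together with the energy estimate of Lemma \ref{multi.lem6}. If $t_n=0$ there is nothing to prove, so assume $t_n\ge 1$. Then Rule \ref{Rule:MDRM} yields $\|Ax_{t_n-1}^{(n)}-\bar y^{(n)}\|>\tau_n s_n/\sqrt{n}$ (whether $t_n$ is determined by the discrepancy criterion itself or by the emergency cap $\beta_0 n$). Substituting this strict lower bound into Lemma \ref{multi.lem6} at $t=t_n-1$, discarding the nonnegative $\|\la_{t_n}^{(n)}\|^2$ contribution, and using the bound $\|\bar y^{(n)}-y\|^2\le c\tau_n^2 s_n^2/(2n)$ available on $\Omega_n$ (which is exactly half of the leading left-hand coefficient), I absorb the noise term and then apply $s_n\ge \sigma/2$ on $\Omega_n$ to obtain
$$
t_n\phi(n)^2\ \le\ \frac{2}{c\gamma}\,\eta_{t_n-1},
$$
after recalling $\phi(n)^2=\sigma^2\tau_n^2/(4n)$. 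Thus it suffices to make $\eta_{t_n-1}$ uniformly small on $\Omega_n$ for $n$ large.

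Next I claim that $\eta_t\to 0$ as $t\to\infty$. Rewriting, $\eta_t=\R(x^\dag)-\inf_{x\in\X}\{\R(x)+\tfrac13\gamma(t+1)\|Ax-y\|^2\}$; plugging $x=x^\dag$ shows $\eta_t\ge 0$, and since the inner infimum is monotonically nondecreasing in $t$, $\eta_t$ is monotonically nonincreasing and converges to some $L\ge 0$. Applying the Fenchel--Rockafellar identity already used in the proof of Lemma \ref{multi.lem6} to the Tikhonov functional, and letting the regularization parameter tend to infinity, gives the classical Tikhonov-convergence statement $\inf_{x\in\X}\{\R(x)+\alpha\|Ax-y\|^2\}\to\R(x^\dag)$ as $\alpha\to\infty$, valid under the strong convexity and lower semi-continuity of $\R$; hence $L=0$.

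Given $\ep>0$, pick an integer $T$ such that $\eta_t\le c\gamma\ep/2$ for all $t\ge T-1$. Since $\tau_n/\sqrt{n}\to 0$, the quantity $T\phi(n)^2=T\sigma^2\tau_n^2/(4n)$ tends to zero, so there exists $n_\ep$ with $T\phi(n)^2\le \ep$ for all $n\ge n_\ep$. On $\Omega_n$ with $n\ge n_\ep$ I split into two cases: if $t_n\ge T$ then $t_n-1\ge T-1$ and the key bound gives $t_n\phi(n)^2\le 2\eta_{t_n-1}/(c\gamma)\le \ep$; if $t_n<T$ then $t_n\phi(n)^2\le T\phi(n)^2\le \ep$. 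In either case the claim holds.

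The delicate point is the verification of $\eta_t\to 0$ in a possibly non-reflexive Banach space, where weak compactness of sublevel sets of $\R$ is not automatic; this has to be handled through the dual representation of the Tikhonov infimum rather than by extracting weak subsequential limits of the Tikhonov minimizers. Everything else is straightforward bookkeeping around the two-case argument above.
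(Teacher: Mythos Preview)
Your overall strategy is sound and genuinely different from the paper's. Both proofs start from the defining inequality $\|Ax_{t_n-1}^{(n)}-\bar y^{(n)}\|>\tau_n s_n/\sqrt{n}$ and use the bound $\|\bar y^{(n)}-y\|^2\le c\tau_n^2 s_n^2/(2n)$ on $\Omega_n$ to absorb the noise. The paper, however, works not with Lemma~\ref{multi.lem6} itself but with the earlier inequality (\ref{multi.6}) in its proof, obtaining on $\Omega_n$
\[
\frac{c}{2}\gamma t_n\phi(n)^2+\frac{1}{8\gamma t_n}\|\la_{t_n}^{(n)}\|^2\le d_y(\la)-d_y(\la_{t_n}^{(n)})+\frac{3\|\la\|^2}{4\gamma t_n}
\]
for every $\la\in\Y$, and then picks $\la=\la_\ep$ with $d_y(\la_\ep)\le\inf_\la d_y(\la)+\frac{c\gamma}{4}\ep$; this needs only $\inf_\la d_y(\la)>-\infty$, which is immediate from Fenchel--Young. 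The paper's case split is $t_n\le 1/\phi(n)$ versus $t_n>1/\phi(n)$, and as a by-product it also bounds $\|\la_{t_n}^{(n)}\|$, which is reused in Theorem~\ref{multi.thm3.10}. Your route, applying Lemma~\ref{multi.lem6} in its final form and reducing to $t_n\phi(n)^2\le\frac{2}{c\gamma}\eta_{t_n-1}$, is cleaner once $\eta_t\to 0$ is known, and your split $t_n\ge T$ versus $t_n<T$ is tidy.

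The gap is the justification of $\eta_t\to 0$. The dual representation you invoke, $I_\alpha:=\inf_x\{\R(x)+\alpha\|Ax-y\|^2\}=\sup_\la\{-d_y(\la)-\|\la\|^2/(4\alpha)\}$, only yields $I_\alpha\to-\inf_\la d_y(\la)$ as $\alpha\to\infty$; identifying this limit with $\R(x^\dag)$ is precisely strong duality for (\ref{multi.101}), and Fenchel--Rockafellar does not supply it here because the constraint $Ax=y$ admits no interior-point condition. Your closing paragraph correctly flags non-reflexivity as the obstacle, but the dual route you propose does not close the argument by itself. The claim $\eta_t\to 0$ is nevertheless true: letting $x_\alpha$ be the (unique) minimizer of $\R(\cdot)+\alpha\|A\cdot-y\|^2$, strong convexity gives $c_0\|x_{\alpha'}-x_\alpha\|^2\le I_{\alpha'}-I_\alpha$ for $\alpha'>\alpha$, so $(x_\alpha)$ is Cauchy in $\X$; its limit $\bar x$ satisfies $A\bar x=y$ and $\R(\bar x)\le\lim I_\alpha\le\R(x^\dag)$, forcing $\bar x=x^\dag$ and hence $\lim I_\alpha=\R(x^\dag)$. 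This primal Cauchy argument is where strong convexity actually enters; with it your proof is complete. The paper's approach sidesteps the issue entirely.
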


\begin{proof}
Note that on $\Omega_n$ we always have $\sigma/2 \le s_n \le 3\sigma/2$. If $t_n \le 1/\phi(n)$, then $t_n \phi(n)^2 \le \phi(n)$. In the following we will assume $t_n >1/\phi(n)$. By the definition of $t_n$ we have
$$
\|A x_{t_n-1}^{(n)} -\bar y^{(n)}\| > \tau_n \frac{s_n}{\sqrt{n}}.
$$
Therefore, by using (\ref{multi.6}) we have on $\Omega_n$ that
\begin{align*}
c \gamma t_n \frac{\tau_n^2 s_n^2}{n}+\frac{1}{8\gamma t_n} \|\la_{t_n}^{(n)}\|^2
& \le c \gamma t_n \|A x_{t_n-1}^{(n)} - \bar y^{(n)}\|^2
+ \frac{1}{8\gamma t_n} \|\la_{t_n}^{(n)}\|^2 \\
& \le d_y(\la) - d_y(\la_{t_n}^{(n)})
+ \frac{3\|\la\|^2}{4 \gamma t_n}
+ \gamma t_n \|\bar y^{(n)} - y\|^2 \\
& \le d_y(\la) - d_y(\la_{t_n}^{(n)})
+ \frac{3\|\la\|^2}{4 \gamma t_n}
+ c \gamma t_n \frac{\tau_n^2 s_n^2}{2 n}
\end{align*}
which implies that
\begin{align}\label{multi.9}
\frac{c}{2} \gamma t_n \phi(n)^2 + \frac{1}{8 \gamma t_n} \|\la_{t_n}^{(n)}\|^2
& \le c \gamma t_n \frac{\tau_n^2 s_n^2}{2 n}+\frac{1}{8\gamma t_n} \|\la_{t_n}^{(n)}\|^2 \nonumber \\
& \le d_y(\la) - d_y(\la_{t_n}^{(n)}) + \frac{3\|\la\|^2}{4\gamma t_n}
\end{align}
for all $\la\in \Y$. Note that
$$
d_y(\la) = \R^*(A^*\la) - \l A^* \la, x^\dag\r \ge - \R(x^\dag)
$$
which shows that
$$
\inf_{\la\in \Y} d_y(\la) \ge -\R(x^\dag) >-\infty.
$$
Therefore, for any $\ep>0$ we can find $\la_\ep\in \Y$ such that
$$
d_y(\la_\ep) \le \inf_{\la\in \Y} d_y(\la) + \frac{c\gamma}{4} \ep
\le d_y(\la_{t_n}^{(n)}) + \frac{c\gamma}{4} \ep.
$$
This and (\ref{multi.9}) with $\la=\la_\ep$ show that
\begin{align}\label{multi.10}
\frac{c}{2} \gamma t_n \phi(n)^2 + \frac{1}{8\gamma t_n} \|\la_{t_n}^{(n)}\|^2
\le \frac{c \gamma}{4} \ep + \frac{3\|\la_\ep\|^2}{4\gamma t_n}
\end{align}
which together with $t_n>1/\phi(n)$ in particular implies
\begin{align*}
t_n \phi(n)^2 \le \frac{\ep}{2} + \frac{3\|\la_\ep\|^2}{2 c \gamma^2 t_n}
\le \frac{\ep}{2} + \frac{3\|\la_\ep\|^2}{2 c \gamma^2} \phi(n).
\end{align*}
Therefore, we always have
$$
t_n \phi(n)^2 \le \max\left\{\phi(n), \frac{\ep}{2} + \frac{3\|\la_\ep\|^2}{2 c\gamma^2} \phi(n)\right\} \quad \mbox{ on } \Omega_n.
$$
Since $\phi(n)\to 0$ as $n\to \infty$, we can find $n_\ep$ such that $\phi(n)\le \ep$ and $\frac{3\|\la_\ep\|^2}{2 c \gamma^2} \phi(n) \le \frac{\ep}{2}$ for all $n \ge n_\ep$. Thus $t_n \phi(n)^2 \le \ep$ on $\Omega_n$ for all $n \ge n_\ep$.
\end{proof}

Based on Lemma \ref{multi.lem9}, we are now ready to show a convergence result for $x_{t_n}^{(n)}$ with $t_n$ determined by Rule \ref{Rule:MDRM}.

\begin{theorem}\label{multi.thm3.10}
Let Assumption \ref{ass:dgm} hold and consider the method (\ref{multi.2}) with $\la_0^{(n)}=0$. Assume that $0<\gamma <2c_0/\|A\|^2$. Let $t_n$ be determined by Rule \ref{Rule:MDRM} with $\tau_n$ satisfying (\ref{taun}). Assume that there exists $\xi^\dag\in \p \R(x^\dag)$ such that $\xi^\dag \in \overline{A^*(\Y)}$. Then for any $0<\a<1$ there holds
\begin{align}\label{multi.11}
\EE\left[\left(D_\R^{A^*\la_{t_n}^{(n)}}(x^\dag, x_{t_n}^{(n)})\right)^\a\right] \to 0  \quad \mbox{ as } n \to \infty
\end{align}
and consequently for any $0< p<2$ there holds
\begin{align}\label{multi.15}
\EE\left[\|x_{t_n}^{(n)}-x^\dag\|^p\right] \to 0 \quad \mbox{ as } n\to \infty.
\end{align}
\end{theorem}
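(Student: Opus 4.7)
The plan is to combine the pointwise Bregman-distance control afforded by the source condition $\xi^\dag\in\overline{A^*(\Y)}$ with the a priori bound on $t_n$ from Lemma \ref{multi.lem9}, and then upgrade the resulting pointwise estimate into convergence of the $\a$-th moment via H\"older's inequality.

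Write $D^{(n)}:=D_\R^{A^*\la_{t_n}^{(n)}}(x^\dag,x_{t_n}^{(n)})$. Using $\xi^\dag\in\p\R(x^\dag)$ together with $A^*\la_{t_n}^{(n)}\in\p\R(x_{t_n}^{(n)})$ one finds
$$
D^{(n)}\le\l A^*\la_{t_n}^{(n)}-\xi^\dag,\,x_{t_n}^{(n)}-x^\dag\r.
$$
Splitting the right-hand side through an arbitrary $\la\in\Y$, applying Cauchy--Schwarz, and invoking Young's inequality together with the strong-convexity bound $c_0\|x_{t_n}^{(n)}-x^\dag\|^2\le D^{(n)}$ to absorb $\|x_{t_n}^{(n)}-x^\dag\|$, I would obtain the key estimate
$$
D^{(n)}\le 2\bigl(\|\la_{t_n}^{(n)}\|+\|\la\|\bigr)\|Ax_{t_n}^{(n)}-y\|+\tfrac{1}{c_0}\|A^*\la-\xi^\dag\|^2.
$$

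The next step is to control the two random factors on the event $\Omega_n$ of (\ref{multi.98}). Since $\tau_n\to\infty$, Lemma \ref{multi.lem9} forces $t_n<\beta_0 n$ on $\Omega_n$ for all $n$ sufficiently large, so the discrepancy condition $\|Ax_{t_n}^{(n)}-\bar y^{(n)}\|\le\tau_n s_n/\sqrt n$ genuinely fires; combined with $\|\bar y^{(n)}-y\|\le\tau_n\sqrt{cs_n^2/(2n)}$ and $s_n\le 3\sigma/2$ on $\Omega_n$, the triangle inequality yields $\|Ax_{t_n}^{(n)}-y\|\le C_1\phi(n)$. Applying (\ref{multi.10}) from the proof of Lemma \ref{multi.lem9} with the near-minimizer $\la_\ep$ of $d_y$ gives $\|\la_{t_n}^{(n)}\|^2\le 2c\gamma^2 t_n\ep+6\|\la_\ep\|^2$; multiplying by $\phi(n)^2$ and using $t_n\phi(n)^2\le\ep$ leads to $\|\la_{t_n}^{(n)}\|\phi(n)\le\sqrt{2c}\,\gamma\ep+\sqrt 6\,\|\la_\ep\|\phi(n)$. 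Given an arbitrary $\eta>0$, I would first pick $\la$ via $\xi^\dag\in\overline{A^*(\Y)}$ so that $\|A^*\la-\xi^\dag\|^2/c_0\le\eta/4$; then shrink $\ep$ so that $2C_1\sqrt{2c}\,\gamma\ep\le\eta/4$ (this determines $\la_\ep$); finally take $n$ large enough that $2C_1\sqrt 6\,\|\la_\ep\|\phi(n)\le\eta/4$ and $2C_1\|\la\|\phi(n)\le\eta/4$. The outcome is $D^{(n)}\le\eta$ pointwise on $\Omega_n$ for all sufficiently large $n$.

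To turn this into convergence of the $\a$-th moment, I would split
$$
\EE\bigl[(D^{(n)})^\a\bigr]\le\eta^\a\P(\Omega_n)+\EE\bigl[(D^{(n)})^\a\mathbf 1_{\Omega_n^c}\bigr]\le\eta^\a+\bigl(\EE[D^{(n)}]\bigr)^\a\bigl(\P(\Omega_n^c)\bigr)^{1-\a},
$$
the last step by H\"older with conjugate exponents $1/\a$ and $1/(1-\a)$. The principal obstacle, in my view, is showing $\sup_n\EE[D^{(n)}]<\infty$, since the vanishing of $\P(\Omega_n^c)$ helps only when coupled with such a bound. I would obtain this by using the Fenchel identity (\ref{dgm.22}) to rewrite $D^{(n)}=\R(x^\dag)+d_y(\la_{t_n}^{(n)})$, then bounding $d_y(\la_{t_n}^{(n)})\le\R^*(0)+\|\la_{t_n}^{(n)}\|\|\bar y^{(n)}-y\|$ (with $\R^*(0)\le 0$ since $\R\ge 0$), and invoking Lemma \ref{multi.lem6} together with the trivial $\eta_t\le\R(x^\dag)$ to derive the pathwise estimate $\|\la_{t_n}^{(n)}\|^2\le 8\gamma t_n\R(x^\dag)+8\gamma^2 t_n^2\|\bar y^{(n)}-y\|^2$; the sure bound $t_n\le\beta_0 n$ combined with $\EE[\|\bar y^{(n)}-y\|^2]=\sigma^2/n$ and Cauchy--Schwarz then produces an $n$-independent bound on $\EE[\|\la_{t_n}^{(n)}\|\|\bar y^{(n)}-y\|]$. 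Letting $n\to\infty$ and then $\eta\downarrow 0$ proves (\ref{multi.11}), and (\ref{multi.15}) follows by applying (\ref{multi.11}) with $\a=p/2\in(0,1)$ to the strong-convexity estimate $\|x_{t_n}^{(n)}-x^\dag\|^p\le c_0^{-p/2}(D^{(n)})^{p/2}$.
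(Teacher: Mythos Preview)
Your argument is correct and matches the paper's proof in structure: the Bregman decomposition via $\xi^\dag$, the pointwise control on $\Omega_n$ using Lemma \ref{multi.lem9} and (\ref{multi.10}), and the H\"older split leading to (\ref{multi.11}) are all identical. The only variation is your route to the uniform bound $\sup_n\EE[D^{(n)}]<\infty$, which uses the Fenchel identity $D^{(n)}=\R(x^\dag)+d_y(\la_{t_n}^{(n)})$ rather than the paper's direct estimate $D^{(n)}\le 2\|\la_{t_n}^{(n)}\|\|Ax_{t_n}^{(n)}-y\|+\|\xi^\dag\|^2/c_0$ (controlling both factors via Lemma \ref{multi.lem6}); just note that your inequality $d_y(\la_{t_n}^{(n)})\le\R^*(0)+\|\la_{t_n}^{(n)}\|\|\bar y^{(n)}-y\|$ tacitly relies on $d_{\bar y^{(n)}}(\la_{t_n}^{(n)})\le d_{\bar y^{(n)}}(0)$, which follows from Proposition \ref{dgm.prop1} applied with $\la=0$.
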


\begin{proof}
First we have
\begin{align}\label{multi.20}
D_\R^{A^*\la_{t_n}^{(n)}}(x^\dag, x_{t_n}^{(n)})
& \le D_\R^{A^*\la_{t_n}^{(n)}}(x^\dag, x_{t_n}^{(n)})
+ D_\R^{\xi^\dag}(x_{t_n}^{(n)}, x^\dag) \nonumber \\
& = \l A^* \la_{t_n}^{(n)} - \xi^\dag, x_{t_n}^{(n)} - x^\dag\r.
\end{align}
Since $\xi^\dag \in \overline{A^*(\Y)}$, for any $\ep>0$ we can find $\la^\dag_\ep \in \Y$ and $v_\ep \in \X^*$ such that $$
\xi^\dag = A^* \la^\dag_\ep + v_\ep \quad \mbox{ and } \quad \|v_\ep\|^2 \le \frac{1}{4} c_0 \ep.
$$
Therefore
\begin{align*}
D_\R^{A^*\la_{t_n}^{(n)}}(x^\dag, x_{t_n}^{(n)})
& \le \l A^* \la_{t_n}^{(n)}-A^* \la^\dag_\ep - v_\ep, x_{t_n}^{(n)} - x^\dag\r \\
& \le \l\la_{t_n}^{(n)}-\la^\dag_\ep, A x_{t_n}^{(n)} - y\r
+ \|v_\ep\| \|x_{t_n}^{(n)} - x^\dag\|.
\end{align*}
By using the strong convexity of $\R$ and the estimate on $\|v_\ep\|$ we have
\begin{align*}
\|v_\ep\| \|x_{t_n}^{(n)} - x^\dag\|
\le \frac{1}{2} c_0  \|x_{t_n}^{(n)} - x^\dag\|^2 + \frac{\|v_\ep\|^2}{2 c_0}
\le \frac{1}{2} D_\R^{A^*\la_{t_n}^{(n)}}(x^\dag, x_{t_n}^{(n)}) + \frac{\ep}{8}.
\end{align*}
Consequently
\begin{align}\label{multi.12}
D_\R^{A^*\la_{t_n}^{(n)}}(x^\dag, x_{t_n}^{(n)})
& \le 2 \l\la_{t_n}^{(n)}-\la^\dag_\ep, A x_{t_n}^{(n)} - y\r + \frac{\ep}{4} \nonumber\\
& \le 2 \left(\|\la_{t_n}^{(n)}\| + \|\la^\dag_\ep\|\right) \|A x_{t_n}^{(n)} - y\| + \frac{\ep}{4}.
\end{align}

We now show that there exists $n_\ep$ such that
\begin{align}\label{multi.13}
D_\R^{A^*\la_{t_n}^{(n)}}(x^\dag, x_{t_n}^{(n)}) \le \ep \quad \mbox{ on } \Omega_n
\end{align}
for all $n \ge n_\ep$.
According to Lemma \ref{multi.lem9} and (\ref{taun}) we have $t_n< 1/\phi(n)^2 \le \beta_0 n$ on $\Omega_n$ for sufficiently large $n$. Therefore
$$
\|A x_{t_n}^{(n)} - \bar y^{(n)}\| \le \tau_n \frac{s_n}{\sqrt{n}}
$$
and consequently
\begin{align}\label{multi.99}
\|A x_{t_n}^{(n)} - y\|
&\le \|A x_{t_n}^{(n)}-\bar y^{(n)}\| + \|\bar y^{(n)} - y\|
\le \tau_n \frac{s_n}{\sqrt{n}} + \tau_n \sqrt{\frac{c s_n^2}{2 n}} \nonumber\\
& \le \frac{3\sigma \tau_n}{2\sqrt{n}} + \frac{3 \sigma \tau_n}{2} \sqrt{\frac{c}{2n}} = 3\left(1 + \sqrt{\frac{c}{2}}\right) \phi(n) \nonumber\\
& <\frac{9}{2} \phi(n).
\end{align}
By using (\ref{multi.10}) we have
$$
\|\la_{t_n}^{(n)}\|^2 \le 2 c \gamma^2 \ep t_n + 6 \|\la_\ep\|^2,
$$
where $\la_\ep\in \Y$ is an element chosen in the proof of Lemma \ref{multi.lem9}. Therefore, it follows from (\ref{multi.12}) that
\begin{align}\label{multi.14}
D_\R^{A^*\la_{t_n}^{(n)}}(x^\dag, x_{t_n}^{(n)})
\le 9 \left(\sqrt{2c\gamma^2\ep t_n+ 6 \|\la_\ep\|^2} + \|\la^\dag_\ep\|\right) \phi(n) + \frac{\ep}{4}
\end{align}
on $\Omega_n$. By using Lemma \ref{multi.lem9} and the property $\phi(n)\to 0$ as $n\to \infty$, we may find a sufficiently large $n_\ep$ such that
$$
\left(\sqrt{2c\gamma^2\ep t_n+ 6 \|\la_\ep\|^2} + \|\la^\dag_\ep\|\right) \phi(n)
\le \frac{\ep}{12}
$$
for all $n \ge n_\ep$. Combining this with (\ref{multi.14}) we thus obtain (\ref{multi.13}).

Next we will show (\ref{multi.11}). Let $\chi_{\Omega_n}$ denote the characteristic function of $\Omega_n$, i.e. $\chi_{\Omega_n}(\omega) = 1$ if $\omega \in \Omega_n$
and $\chi_{\Omega_n}(\omega) =0$ otherwise. By using (\ref{multi.13}) and the H\"{o}lder inequality we have for sufficiently large $n$ that
\begin{align}\label{multi.321}
& \EE\left[\left(D_\R^{A^*\la_{t_n}^{(n)}}(x^\dag, x_{t_n}^{(n)})\right)^\a\right] \nonumber\\
& = \EE\left[\left(D_\R^{A^*\la_{t_n}^{(n)}}(x^\dag, x_{t_n}^{(n)})\right)^\a \chi_{\Omega_n}\right] + \EE\left[\left(D_\R^{A^*\la_{t_n}^{(n)}}(x^\dag, x_{t_n}^{(n)})\right)^\a \chi_{\Omega_n^c}\right] \nonumber \\
& \le \ep^\a + \left(\EE\left[D_\R^{A^*\la_{t_n}^{(n)}}(x^\dag, x_{t_n}^{(n)})\right]\right)^\a \P(\Omega_n^c)^{1-\a}.
\end{align}
If we are able to show
\begin{align}\label{multi.16}
\EE\left[D_\R^{A^*\la_{t_n}^{(n)}}(x^\dag, x_{t_n}^{(n)})\right] \le C
\end{align}
for some constant $C$ independent of $n$, then, by using the fact that $\P(\Omega_n^c)\to 0$ as $n\to \infty$, we can conclude
$$
\limsup_{n\to \infty} \EE\left[\left(D_\R^{A^*\la_{t_n}^{(n)}}(x^\dag, x_{t_n}^{(n)})\right)^\a\right] \le \ep^\a
$$
and thus obtain (\ref{multi.11}), due to the arbitrariness of $\ep$.

It remains only to show (\ref{multi.16}). From (\ref{multi.20}), the Cauchy-Schwarz inequality and the strong convexity of $\R$ it follows that
\begin{align*}
D_\R^{A^*\la_{t_n}^{(n)}}(x^\dag, x_{t_n}^{(n)})
& \le \l \la_{t_n}^{(n)}, A x_{t_n}^{(n)} - y\r
+ \|\xi^\dag\| \|x_{t_n}^{(n)}-x^\dag\| \\
& \le \|\la_{t_n}^{(n)}\| \|A x_{t_n}^{(n)} - y\| + \frac{1}{2 c_0} \|\xi^\dag\|^2
+ \frac{1}{2} D_\R^{A^*\la_{t_n}^{(n)}}(x^\dag, x_{t_n}^{(n)})
\end{align*}
which implies
\begin{align*}
D_\R^{A^*\la_{t_n}^{(n)}}(x^\dag, x_{t_n}^{(n)})
& \le 2\|\la_{t_n}^{(n)}\| \|A x_{t_n}^{(n)} - y\|
+ \frac{1}{c_0} \|\xi^\dag\|^2
\end{align*}
We next use Lemma \ref{multi.lem6}. By the definition of $\eta_t$ and the nonnegativity of $\R$, we have $0\le \eta_t \le \R(x^\dag)<\infty$ for all $t\ge 0$. Thus, it follows from Lemma \ref{multi.lem6} that \begin{align*}
\|A x_{t_n}^{(n)} - \bar y^{(n)}\|^2
\le C\left( \frac{1}{t_n} + \|\bar y^{(n)} - y\|^2\right),  \quad
\|\la_{t_n}^{(n)}\|^2 \le C\left(t_n + t_n^2 \|\bar y^{(n)} - y\|^2\right).
\end{align*}
Therefore, by using $t_n \le \beta_0 n$, we have 
\begin{align}\label{multi.322}
D_\R^{A^*\la_{t_n}^{(n)}}(x^\dag, x_{t_n}^{(n)}) 
& \le C \left(\sqrt{t_n} + t_n \|\bar y^{(n)} - y\|\right) \left(\frac{1}{\sqrt{t_n}} + \|\bar y^{(n)} - y\|\right) + \frac{1}{c_1} \|\xi^\dag\|^2 \nonumber \\
& \le C \left(1+ \sqrt{t_n} \|\bar y^{(n)} - y\| + t_n \|\bar y^{(n)} - y\|^2\right) + \frac{1}{c_1} \|\xi^\dag\|^2 \nonumber\\
& \le C \left(1+ \sqrt{n} \|\bar y^{(n)} - y\| + n \|\bar y^{(n)} - y\|^2\right) + \frac{1}{c_1} \|\xi^\dag\|^2 .
\end{align}
By taking the expectation and using (\ref{variance}), we can obtain 
\begin{align*}
& \EE\left[D_\R^{A^*\la_{t_n}^{(n)}}(x^\dag, x_{t_n}^{(n)})\right] \\
&\le C \left(1 + \sqrt{n} (\EE[\|\bar y^{(n)}-y\|^2])^{1/2} + n \EE[\|\bar y^{(n)}-y\|^2] \right) + \frac{1}{c_1} \|\xi^\dag\|^2 \\
& \le C\left(1 + \sigma + \sigma^2\right) + \frac{1}{c_1}\|\xi^\dag\|^2 
\end{align*}
which shows (\ref{multi.16}). The proof is therefore complete. 
\end{proof}

\begin{remark}
In Theorem \ref{multi.thm3.10} we have obtained the convergence result (\ref{multi.11}) with $0<\a <1$. If in addition 
$$
\sigma_4:= \EE[\|y_1 - y\|^4] < \infty,
$$
we can improve the convergence result (\ref{multi.11}) to include $\a=1$ and hence $\EE[\|x_{t_n}^{(n)} - x^\dag\|^2] \to 0$ as $n \to \infty$. To see this, we may use the similar argument for deriving (\ref{multi.321}) to obtain 
\begin{align*}
\EE\left[D_\R^{A^*\la_{t_n}^{(n)}}(x^\dag, x_{t_n}^{(n)})\right] 
\le \ep + \left(\EE\left[\left(D_\R^{A^*\la_{t_n}^{(n)}}(x^\dag, x_{t_n}^{(n)})\right)^2\right]\right)^{1/2} \P(\Omega_n^c)^{1/2}.
\end{align*}
Therefore, it suffices to show 
\begin{align}\label{multi.324}
\EE\left[\left(D_\R^{A^*\la_{t_n}^{(n)}}(x^\dag, x_{t_n}^{(n)})\right)^2\right] \le C
\end{align}
for some constant $C$ independent of $n$. By virtue of (\ref{multi.322}) it is easy to derive 
\begin{align}\label{multi.323}
& \EE\left[\left(D_\R^{A^*\la_{t_n}^{(n)}}(x^\dag, x_{t_n}^{(n)})\right)^2\right]  \nonumber \\
& \le C \left( 1 + n \EE\left[\|\bar y^{(n)} - y\|^2\right] + n^2 \EE\left[\|\bar y^{(n)} - y\|^4\right] \right) + \frac{2}{c_1} \|\xi^\dag\|^2. 
\end{align}
With the similar argument in the proof of \cite[Corollary 3]{HJP2020}, we can derive  
\begin{align*}
\EE[\|\bar y^{(n)}-y\|^4] 
& \le \frac{1}{n^3} \EE\left[\|y_1-y\|^4\right] + \frac{3(n-1)}{n^3} \left(\EE\left[\|y_1-y\|^2\right]\right)^2 \\
& = \frac{\sigma_4}{n^3} + \frac{3(n-1)}{n^3} \sigma^4. 
\end{align*}
By invoking this estimate and (\ref{variance}), we can obtain (\ref{multi.324}) from (\ref{multi.323}) immediately. 
\end{remark}

\begin{theorem}\label{multi.thm4}
Let Assumption \ref{ass:dgm} hold and consider the method (\ref{multi.2}) with $\la_0^{(n)}=0$. Assume that $0<\gamma <2c_0/\|A\|^2$. Let $t_n$ be determined by Rule \ref{Rule:MDRM} with $\tau_n$ satisfying (\ref{taun}). If $x^\dag$ satisfies the variational source condition given in Assumption \ref{ass:vsc}, then there is a constant $C$ such that
$$
\P\left(\E^\dag(x_{t_n}^{(n)}) \le C \phi(n)^q\right) \to 1
$$
as $n\to \infty$, where $\phi(n) := \frac{\sigma\tau_n}{2\sqrt{n}}$ is defined in Lemma \ref{multi.lem9}.
\end{theorem}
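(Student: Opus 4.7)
The plan is to work on the high-probability event $\Omega_n$ introduced before Lemma \ref{multi.lem9}, where by construction $\sigma/2 \le s_n \le 3\sigma/2$ and $\|\bar y^{(n)}-y\| \le \tau_n s_n \sqrt{c/(2n)} \le C\phi(n)$, and $\P(\Omega_n) \to 1$. The goal is to establish the deterministic pointwise bound $\E^\dag(x_{t_n}^{(n)}) \le C\phi(n)^q$ on $\Omega_n$ for all sufficiently large $n$; together with $\P(\Omega_n)\to 1$ this yields the probabilistic conclusion directly.

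The first key step is to derive an upper bound on the random stopping index of the form $t_n \le C\phi(n)^{-(2-q)}$ on $\Omega_n$. By Lemma \ref{multi.lem9} combined with condition (\ref{taun}), we already know $t_n < \beta_0 n$ for large $n$ on $\Omega_n$, so $t_n$ is determined by the discrepancy-type inequality; in particular $\|A x_{t_n-1}^{(n)} - \bar y^{(n)}\| > \tau_n s_n/\sqrt{n}$. Inserting this into Lemma \ref{multi.lem6} at index $t=t_n-1$ and using $\|\bar y^{(n)}-y\|^2 \le \tau_n^2 c s_n^2/(2n)$ to absorb half of the left-hand side, we get
\[
\tfrac{c\gamma}{2} t_n \phi(n)^2 \;\le\; \eta_{t_n-1}.
\]
The variational source condition now gives the sharpened bound $\eta_t \le c_2 (t+1)^{-q/(2-q)}$ used in (\ref{multi.23}), so rearranging yields $t_n^{2/(2-q)} \phi(n)^2 \le C$, i.e.\ $t_n \le C \phi(n)^{-(2-q)}$.

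The second step is to bound the dual iterate $\la_{t_n}^{(n)}$. Applying Lemma \ref{multi.lem6} again at $t = t_n-1$, and plugging in $\eta_{t_n-1} \le c_2 t_n^{-q/(2-q)}$ together with $t_n \|\bar y^{(n)}-y\|^2 \le C t_n \phi(n)^2$, both sides are at most $C\phi(n)^q$ by the bound on $t_n$. This gives
\[
\|\la_{t_n}^{(n)}\|^2 \;\le\; C\, t_n \phi(n)^q \;\le\; C\, \phi(n)^{2(q-1)},
\]
so $\|\la_{t_n}^{(n)}\| \le C\phi(n)^{q-1}$. Meanwhile, (\ref{multi.99}) already supplies $\|A x_{t_n}^{(n)} - y\| \le \tfrac{9}{2}\phi(n)$ on $\Omega_n$.

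The final step reproduces the chain of inequalities (\ref{multi.24}) from Theorem \ref{multi.thm2}: using the variational source condition and $A^*\la_{t_n}^{(n)} \in \p \R(x_{t_n}^{(n)})$,
\[
\E^\dag(x_{t_n}^{(n)}) \;\le\; \|\la_{t_n}^{(n)}\|\,\|A x_{t_n}^{(n)} - y\| + M \|A x_{t_n}^{(n)} - y\|^q \;\le\; C\phi(n)^{q-1}\cdot\phi(n) + C\phi(n)^q \;\le\; C\phi(n)^q
\]
on $\Omega_n$ for all sufficiently large $n$. Since $\P(\Omega_n)\to 1$, the probabilistic conclusion follows.

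The main obstacle is pinning down the correct upper bound on the random stopping index $t_n$: this requires simultaneously exploiting the discrepancy-type lower bound at $t_n-1$, the $\Omega_n$-bound on $\|\bar y^{(n)}-y\|$, and the sharpened VSC estimate on $\eta_t$. Once $t_n \le C\phi(n)^{-(2-q)}$ is in hand, the rest of the argument parallels Theorem \ref{multi.thm2} with $\sqrt{\sigma^2/n}$ replaced by $\phi(n)$.
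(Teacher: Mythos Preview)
Your approach mirrors the paper's proof closely: both work on the event $\Omega_n$, derive the key upper bound $t_n \le C\phi(n)^{-(2-q)}$ by combining the pre-stopping inequality $\|Ax_{t_n-1}^{(n)} - \bar y^{(n)}\| > \tau_n s_n/\sqrt{n}$ with Lemma \ref{multi.lem6} and the VSC estimate $\eta_t \le c_2(t+1)^{-q/(2-q)}$, and then feed this into (\ref{multi.24}). A small organizational difference is that you invoke Lemma \ref{multi.lem9} up front to ensure $t_n < \beta_0 n$, whereas the paper deduces $t_n < \beta_0 n$ as a consequence of the sharper bound $t_n \le c_3\phi(n)^{q-2}$; both routes are legitimate.

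There is, however, a slip in your second step. You assert that ``both sides'' --- meaning $\eta_{t_n-1}$ and $\gamma t_n\|\bar y^{(n)}-y\|^2$ --- are at most $C\phi(n)^q$. The second term is fine, but the claim $\eta_{t_n-1} \le c_2 t_n^{-q/(2-q)} \le C\phi(n)^q$ would require a \emph{lower} bound of the form $t_n \ge c\,\phi(n)^{-(2-q)}$, which you do not have (and which need not hold). The easy fix is to bypass this intermediate claim entirely: Lemma \ref{multi.lem6} gives
\[
\|\la_{t_n}^{(n)}\|^2 \;\le\; 8\gamma c_2\, t_n^{2(1-q)/(2-q)} + 8\gamma^2 t_n^2\|\bar y^{(n)}-y\|^2,
\]
and each term on the right is bounded by $C\phi(n)^{2(q-1)}$ using only the \emph{upper} bound $t_n \le C\phi(n)^{-(2-q)}$ (and $\|\bar y^{(n)}-y\|\le C\phi(n)$ on $\Omega_n$). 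This is exactly what the paper does in (\ref{multi.26}). With this correction your argument is complete and coincides with the paper's proof.
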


\begin{proof}
Since $\P(\Omega_n) \to 1$ as $n\to \infty$, it suffices to establish
$$
\E^\dag(x_{t_n}^{(n)}) \le C \phi(n)^q \quad \mbox{ on } \Omega_n
$$
for some constant $C$ independent of $n$. Under the given variational source condition on $x^\dag$, we have the estimate (\ref{multi.23}) on $\eta_t$. Combining this with Lemma \ref{multi.lem6} shows that
\begin{align}\label{multi.25}
c\gamma \|A x_{t_n-1}^{(n)} - \bar y^{(n)}\|^2
\le c_2 t_n^{-\frac{2}{2-q}} + \gamma \|\bar y^{(n)} - y\|^2
\le c_2 t_n^{-\frac{2}{2-q}}+c\gamma\frac{\tau_n^2 s_n^2}{2n}
\end{align}
and
\begin{align}\label{multi.26}
\|\la_{t_n}^{(n)}\|^2
\le 8 \gamma t_n^{\frac{2(1-q)}{2-q}} + 8\gamma^2 t_n^2 \|\bar y^{(n)} - y\|^2
\le 8 \gamma t_n^{\frac{2(1-q)}{2-q}} + 4 \gamma^2 t_n^2 \frac{\tau_n^2 s_n^2}{n}
\end{align}
on $\Omega_n$. Since $\|A x_{t_n-1}^{(n)} - \bar y^{(n)}\|>\tau_n \frac{s_n}{\sqrt{n}}$, we have from (\ref{multi.25}) that
$$
c\gamma \frac{\tau_n^2 s_n^2}{n}
\le c_2 t_n^{-\frac{2}{2-q}}
+ c \gamma \frac{\tau_n^2 s_n^2}{2n}
$$
which together with $s_n \ge \sigma/2$ on $\Omega_n$ implies
$$
\frac{1}{2} c \gamma \phi(n)^2 \le c_2 t_n^{-\frac{2}{2-q}}.
$$
Therefore
\begin{align}\label{multi.27}
t_n \le c_3 \phi(n)^{q-2}
\end{align}
where $c_3>0$ is a constant independent of $n$. This in particular shows that $t_n<\beta_0 n$ and hence $\|A x_{t_n}^{(n)} - \bar y^{(n)}\| \le \tau_n \frac{s_n}{\sqrt{n}}$ on $\Omega_n$ for sufficiently large $n$. Consequently
$$
\|A x_{t_n}^{(n)} - y\| \le \frac{9}{2} \phi(n)
$$
as argued in (\ref{multi.99}). By using (\ref{multi.26}), (\ref{multi.27}) and $s_n \le 3\sigma/2$ we also have
\begin{align*}
\|\la_{t_n}^{(n)}\|^2
\le 8 \gamma t_n^{\frac{2(1-q)}{2-q}}
+ 4 c \gamma^2 t_n^2 \phi(n)^2
\le c_4 \phi(n)^{2(q-1)}
\end{align*}
for some constant $c_4$ independent of $n$. Finally we may use (\ref{multi.24}) to obtain
\begin{align*}
\E^\dag(x_{t_n}^{(n)})
& \le \|\la_{t_n}^{(n)}\| \|A x_{t_n}^{(n)}-y\|
+ C\|A x_{t_n}^{(n)}-y\|^q \le c_5 \phi(n)^q
\end{align*}
for some constant $c_5$ independent of $n$. The proof is complete.
\end{proof}

\begin{remark}
According to the definition of $\phi(n)$, Theorem \ref{multi.thm4} gives the convergence rate
$$
\P\left(\E(x_{t_n}^{(n)}) \le C \tau_n^q \left(\frac{\sigma^2}{n}\right)^{q/2}\right) \to 1 \quad \mbox{ as } n \to \infty.
$$
Because $\tau_n \to \infty$, this rate is worse than the one derived in Theorem \ref{multi.thm2} under an {\it a priori} stopping rule. The requirement  $\tau_n\to \infty$ is used to construct $\Omega_n$ with $\P(\Omega_n) \to 1$ as $n \to \infty$. If such an even $\Omega_n$ could be constructed without using the requirement on $\tau_n$, the convergence rate in Theorem \ref{multi.thm4} could be upgraded to $\P(\E(x_{t_n}^{(n)}) \le C (\sigma^2/n)^{q/2}) \to 1$ as $n \to \infty$. This question however remains open.
\end{remark}

\section{\bf Numerical results}\label{sect4}

In this section we will report various numerical results to test the performance of the method (\ref{multi.2}).

\begin{example}\label{ex1}
We first consider the application of the method (\ref{multi.2}) to solve linear ill-posed
problems in Hilbert spaces with convex constraint. Let $A : \X \to \Y$ be a bounded linear operator between two Hilbert spaces $\X$ and $\Y$ and let $\C\subset \X$ be a closed convex set. Given $y\in A(\C)$, we consider finding the unique solution $x^\dag$ of $Ax = y$ in $\C$ with minimal norm which can be stated as (\ref{multi.101}) with
$$
\R(x) := \frac{1}{2} \|x\|^2 + \iota_\C(x),
$$
where $\iota_\C$ denotes the indicator function of $\C$. Clearly $\R$ satisfies Assumption \ref{ass:dgm} (ii) with $c_0 = 1/2$. It is easy to see that the method (\ref{multi.2}) takes the form
\begin{align}\label{dgm.42}
x_t^{(n)} = P_\C (A^*\la_t^{(n)}), \qquad \la_{t+1}^{(n)} = \la_t^{(n)} - \gamma (A x_t^{(n)} - \bar y^{(n)}),
\end{align}
where $P_\C$ denotes the metric projection of $\X$ onto $\C$. In case $\X = L^2(\D)$ for some domain $\D \subset {\mathbb R}^d$ and $\C= \{x\in \X: x \ge 0 \mbox{ a.e. on } \D\}$, the iteration scheme (\ref{dgm.42}) becomes
\begin{align}\label{Algorithm1}
x_t^{(n)} = \max\{A^* \la_t^{(n)}, 0\}, \qquad
\la_{t+1}^{(n)}  = \la_t^{(n)} - \gamma (A x_t^{(n)}- \bar y^{(n)})
\end{align}
with initial guess $\la_0^{(n)} =0$. This method can be viewed as an application of the Landweber iteration to the dual variable with nonnegative constraint on the primal variable.

\begin{figure}[htpb]
\centering
\includegraphics[width = 0.32\textwidth]{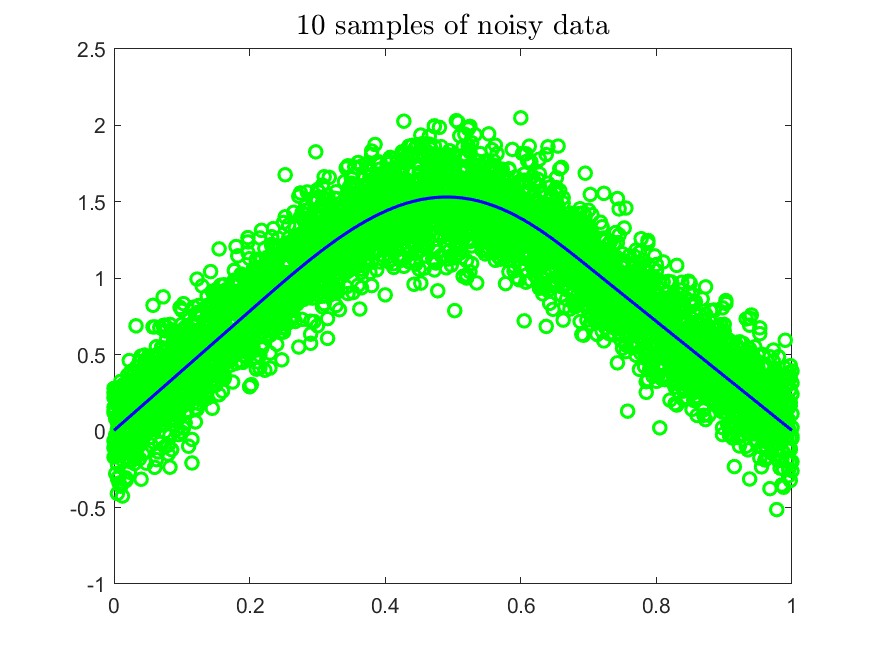}
\includegraphics[width = 0.32\textwidth]{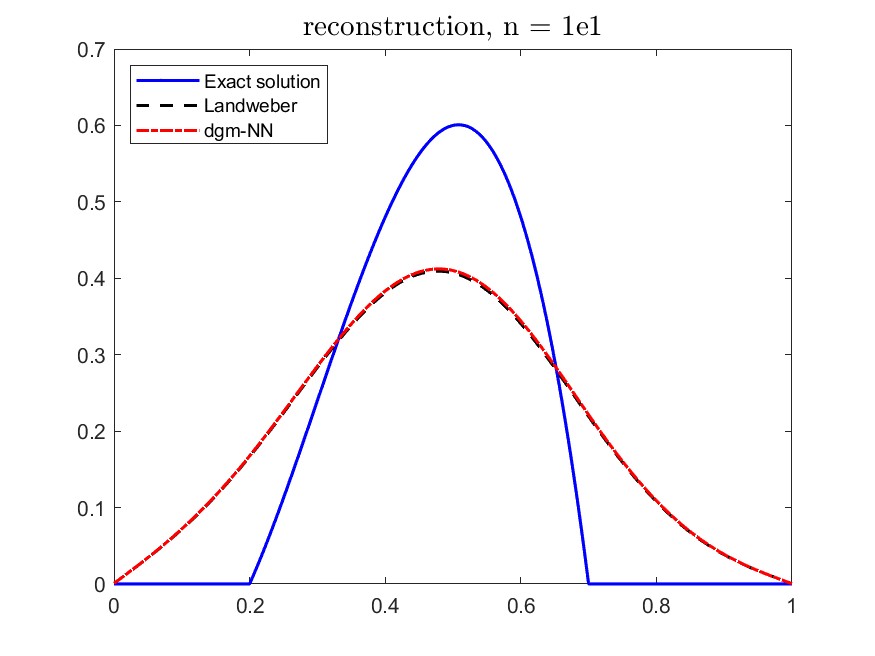}
\includegraphics[width = 0.32\textwidth]{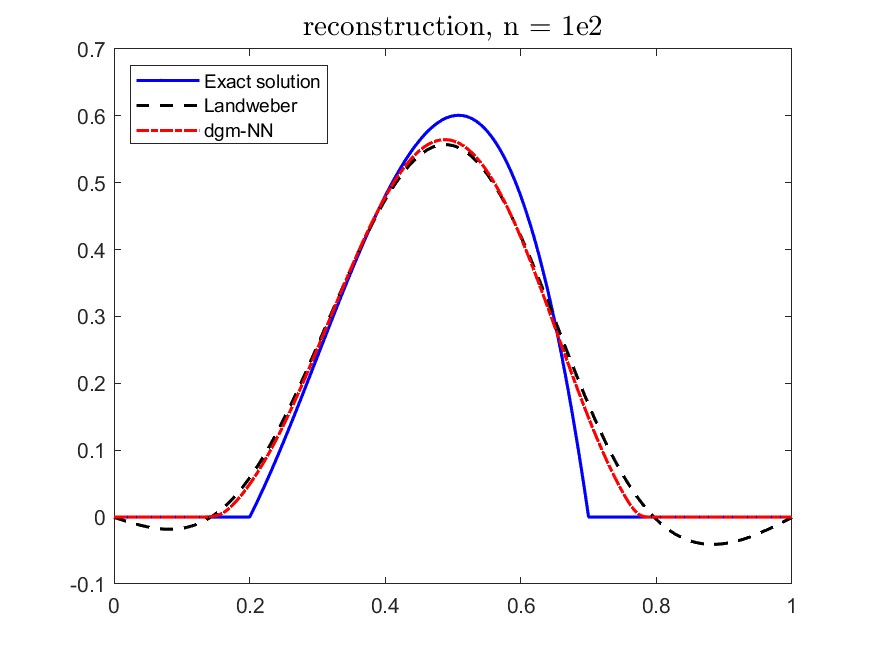}
\includegraphics[width = 0.32\textwidth]{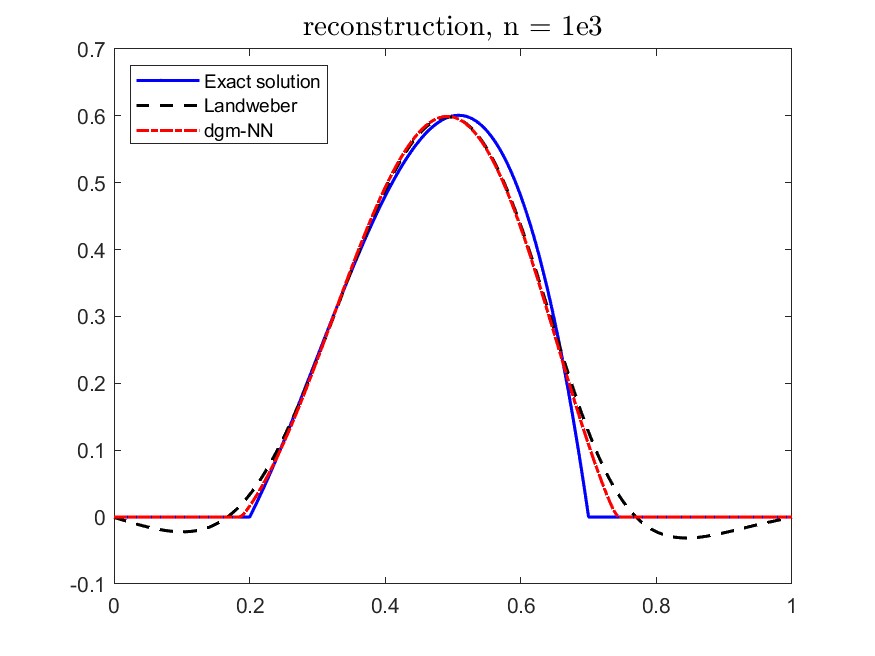}
\includegraphics[width = 0.32\textwidth]{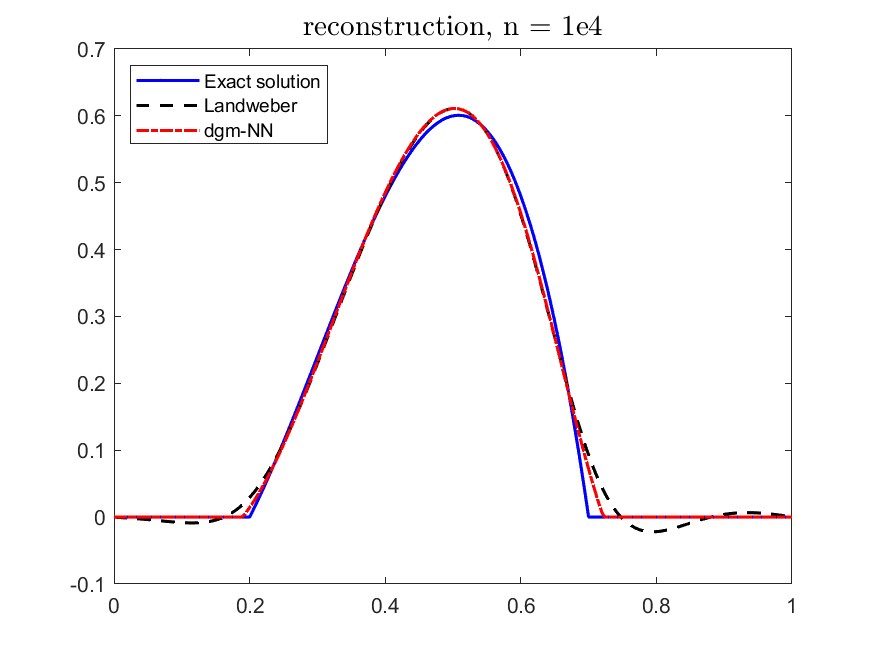}
\includegraphics[width = 0.32\textwidth]{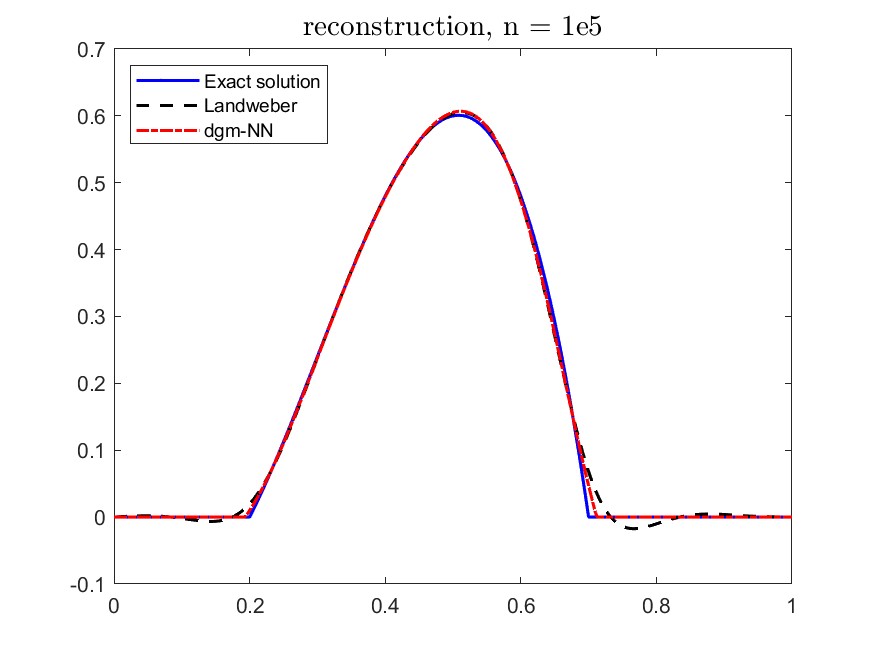}
\caption{The noisy data (10 samples, green circle), the exact data (blue one) and the reconstruction results with different sample sizes of Example \ref{ex1}, where the data are corrupted by Gaussian noise.}\label{fig1}
\end{figure}

We now test the performance of the method (\ref{Algorithm1}) by reconstructing nonnegative solutions of linear ill-posed problems and compare it with the Landweber iteration
\begin{align}\label{Land}
x_{t+1}^{(n)}  = x_t^{(n)} - \gamma A^* (A x_t^{(n)}- \bar y^{(n)})
\end{align}
that is considered in \cite{HJP2020} which does not incorporate the nonnegative constraint.
Let us consider the first kind Fredholm integral equation of the form
\begin{equation}\label{Ax}
Ax(s) = \int_0^1 k(s,t)x(t)dt = y(s) \quad  \mbox{ on }  [0,1]
\end{equation}
with the kernel
\begin{align}\label{kernel}
k(s,t)=\left\{
\begin{array}{lll}
40s(1-t),& s\leq t\\
40t(1-s),& s\geq t.
\end{array}
\right.
\end{align}
It is easy to see that $A$ is a compact linear operator from $\X=L^2[0, 1]$  to $\Y = L^2[0, 1]$.
We assume the sought solution is given by
\begin{align*}
x^\dag(s)=\left\{\begin{array}{lll}
20s(s-0.2)(0.7-s),& 0.2 \le s\le 0.7, \\
0, & \mbox{elsewhere}
\end{array}
\right.
\end{align*}
which is nonnegative and the exact data is $y:= A x^\dag$. By adding independent Gaussian noise $\epsilon_i$ in $N(0, \sigma^2)$ with $\sigma = 0.2$ to $y$ we produce the independent identically distributed noisy data $y_i = y + \epsilon_i$, $i = 1, \cdots, n$. For implementing (\ref{Algorithm1}) and (\ref{Land}) with the noisy data $y_i$ we use the step-size $\gamma=2/\|A\|^2$ and terminate the iterations by Rule \ref{Rule:MDRM} with $\beta_0 = 10$ and $\tau_n$ given by (\ref{taun2}), where $\tau_0=1.1$. In our numerical simulations, we divide $[0, 1]$ into $m=400$ subintervals of equal length and approximate integrals by the trapezoidal rule. In Figure \ref{fig1} we plot the exact data (blue one) and the noisy data (10 samples, green circles). We consider several different sample sizes $n= 10, 10^2, 10^3, 10^4, 10^5$, each is run by 200 simulations. In order to visualize the performance, we plot in Figure \ref{fig1} the reconstructed solutions (the mean of 200 simulations), where ``\texttt{dgm-NN}" and ``\texttt{Landweber}" represent the results obtained by the methods (\ref{Algorithm1}) and (\ref{Land}) respectively. The results indicate that, as the sample size increases, more accurate reconstruction results can be obtained. Since the method (\ref{Algorithm1}) incorporates the nonnegativity constraint, it produces satisfactory results; while the reconstruction by (\ref{Land}) includes undesired negative values.

\begin{table}[ht]
\caption{Numerical results of Example \ref{ex1}, where the data are corrupted by Gaussian noise.} \label{table1}
    \begin {center}
\begin{tabular}{llllllll}
     \hline
n & Method & Iteration numbers & Relative error & Emergency stops      \\
  &        & (in average)     &    &   \\
\hline
$10$   & dgm-NN     & 17   & 3.9138e-01  & 0 \\
       & Landweber  & 17   & 3.9254e-01  & 0 \\
$10^2$ & dgm-NN     & 115  & 1.2506e-01  & 0 \\
       & Landweber  & 101  & 1.5405e-01  & 0  \\
$10^3$ &  dgm-NN    & 252  & 7.4015e-02  & 0  \\
       & Landweber  & 356  & 9.7710e-02  & 0\\
$10^4$ & dgm-NN     & 887  & 4.3395e-02  & 0  \\
       & Landweber  & 1174 & 6.1918e-02  & 0  \\
$10^5$ & dgm-NN     & 2686 & 2.2675e-02  & 0 \\
       & Landweber  & 4373 & 3.7267e-02  & 0 \\
   \hline
    \end{tabular}\\[5mm]
    \end{center}
\end{table}

In Table \ref{table1} we report the numerical results including the required average number of iterations and the relative error (in average) which is calculated by 
$\sqrt{\frac{1}{200}\sum_{i=1}^{200} e_i^2}$
as an approximation of $(\EE[\|x_{t_n}^{(n)} - x^\dag\|^2/\|x^\dag\|^2])^{1/2}$, where $e_i$ denotes the relative error between the exact solution and the reconstructed solution of the $i$-th run respectively. We also record the number of the times that $t_n$ reaches $\beta_0 n$ to see if Rule \ref{Rule:MDRM} requires the emergency stop.

\begin{figure}[htpb]
\centering
\includegraphics[width = 0.48\textwidth]{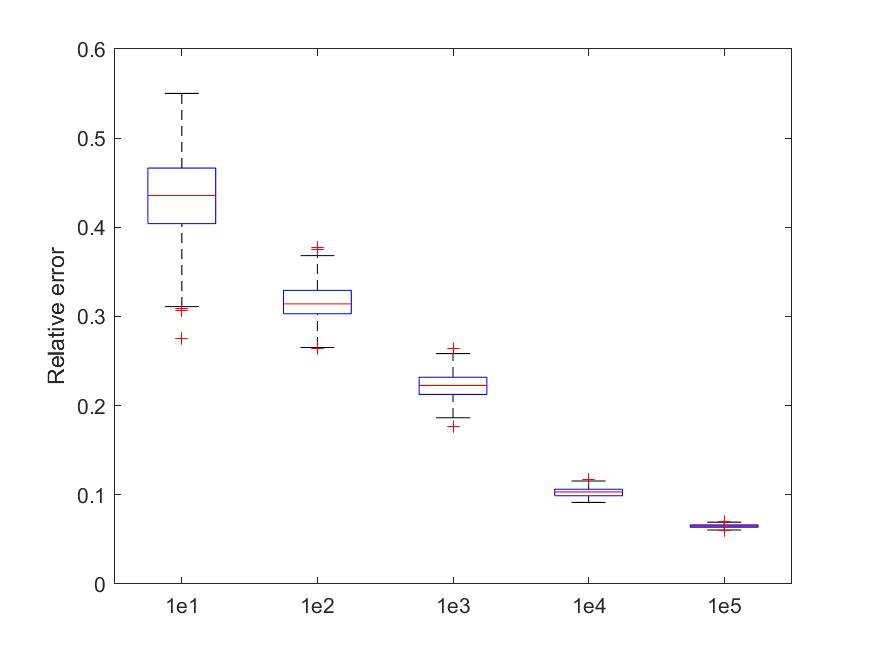}
\includegraphics[width = 0.48\textwidth]{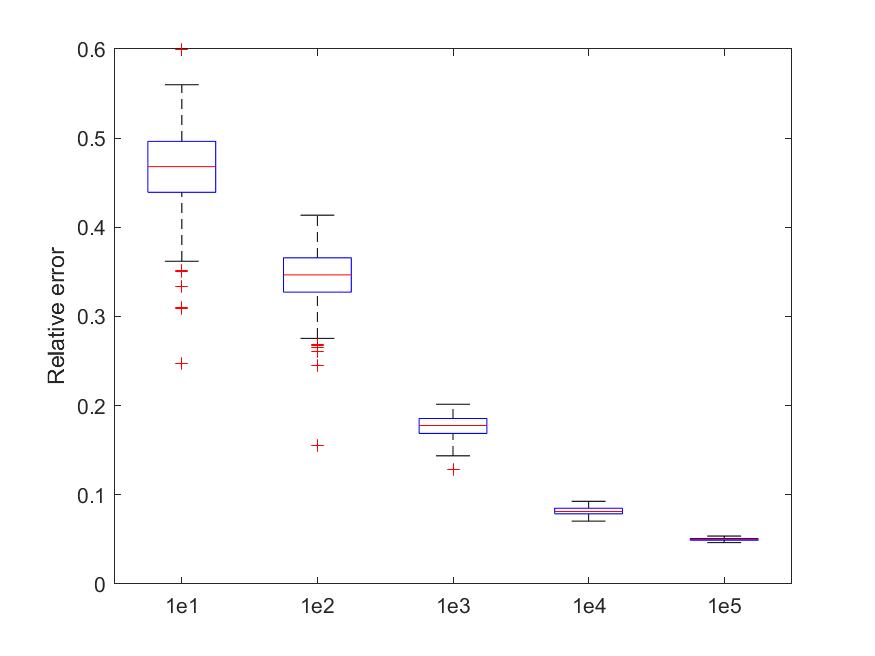}
\caption{ Boxplots of the relative errors for 200 simulations for Example \ref{ex1} with different sample sizes for noisy data corrupted by Gaussan noise $N(0,\sigma^2)$ with $\sigma= 0.2$;  the left one is for the method (\ref{Land}) and the right one is for the method (\ref{Algorithm1}). }\label{fig1box1}
\end{figure}


In Figure \ref{fig1box1}  we present the boxplots of the relative errors given by the methods (\ref{Algorithm1}) and (\ref{Land}) with different sample sizes.  On each box, the central mark is the median, the bottle and top edges of the box indicate the 25th and 75th percentiles, the whiskers extend to the most extreme data points the algorithm considers to be not outliers, and the outliers (red crosses) are plotted individually. It is visible that the proposed method is convergent.  The red crosses below the blue box implies that the real noise levels have been underestimated. In this case the upper bound  $\beta_0 n$ of iteration numbers plays the important role of emergency stop. On the other hand, the red crosses above the blue box implies the real noise levels have been overestimated or the semi-convergence phenomenon has been happened already.

\end{example}

\begin{example}\label{ex2}
Consider the equation $A x = y$, where $A: L^1(\D) \to \Y$ is a bounded linear operator, $\Y$ is a Hilbert space, and $\D\subset {\mathbb R}^d$ is a bounded domain. Assuming the sought solution is a probability density function, we may find such a solution by considering the convex minimization problem (\ref{multi.101}) with
\begin{align}\label{dgm.46}
\R(x) : = f(x) + \iota_\Delta (x),
\end{align}
where $\iota_\Delta$ denotes the indicator function of the closed convex set
\begin{align*}
\Delta := \left\{x\in L^1(\D): x\ge 0 \mbox{ a.e. on } \D \mbox{ and } \int_\D x =1\right\}
\end{align*}
in $L^1(\D)$ and $f$ denotes the negative of the Boltzmann-Shannon entropy, i.e.
$$
f(x) := \left\{\begin{array}{lll}
\int_\D x \log x & \mbox{ if } x \in L_+^1(\D) \mbox{ and } x \log x \in L^1(\D), \\[1.2ex]
\infty & \mbox{ otherwise},
\end{array}\right.
$$
where $L_+^1(\D):= \{x \in L^1(\D): x \ge 0 \mbox{  a.e. on } \D \}$. According to \cite{BL1991}, $\R$ satisfies Assumption \ref{ass:dgm} (ii) with $c_0 =1/2$. By the Karush-Kuhn-Tucker theory, for any $\ell\in L^\infty(\D)$ the unique minimizer of
\begin{align*}
\min_{x\in L^1(\D)} \left\{ \R(x) - \int_\D \ell x \right\}
\end{align*}
is given by $\hat x := e^\ell/\int_\D e^\ell$. Therefore the dual gradient method (\ref{multi.2}) using multiple repeated measurement data takes the form
\begin{align}\label{entropic}
x_t^{(n)} = \frac{1}{\int_\Omega e^{A^*\la_t^{(n)}}} e^{A^*\la_t^{(n)}}, \qquad
\la_{t+1}^{(n)}  = \la_t^{(n)} - \gamma (A x_t^{(n)}- \bar y^{(n)})
\end{align}
with initial guess $\la_0^{(n)} =0$, which is an entropic dual gradient method (\cite{Jin2022}). This method is 
actually equivalent to the exponentiated gradent method (\cite{BRB2019,KW1997}) 
$$
x_{t+1}^{(n)} = \frac{x_t^{(n)} e^{\gamma A^*(\bar y^{(n)} - A x_t^{(n)})}}
{\int_\Omega x_t^{(n)} e^{\gamma A^*(\bar y^{(n)} - A x_t^{(n)})}}.
$$

\begin{figure}[htpb]
\centering
\includegraphics[width = 0.32\textwidth]{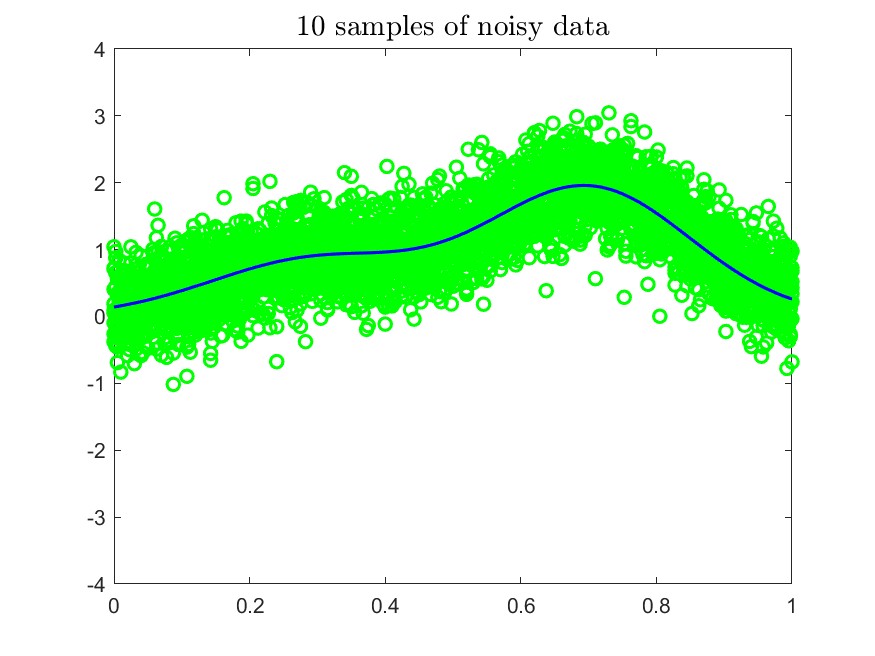}
\includegraphics[width = 0.32\textwidth]{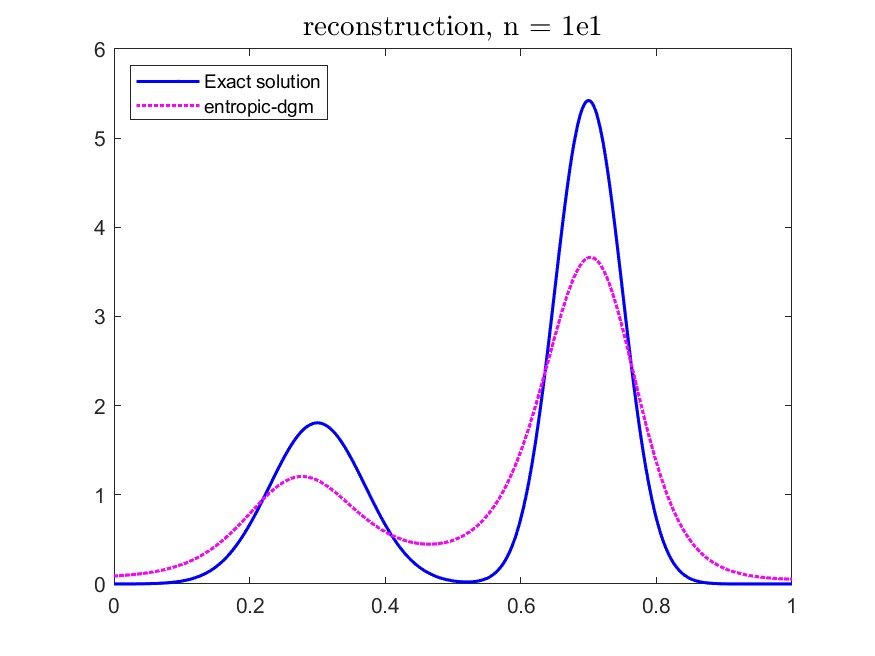}
\includegraphics[width = 0.32\textwidth]{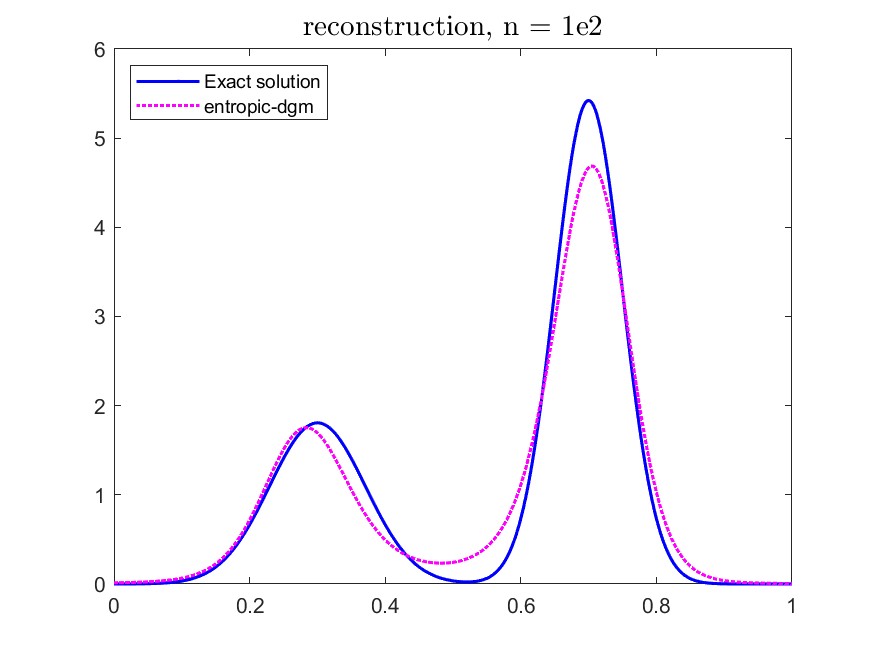}
\includegraphics[width = 0.32\textwidth]{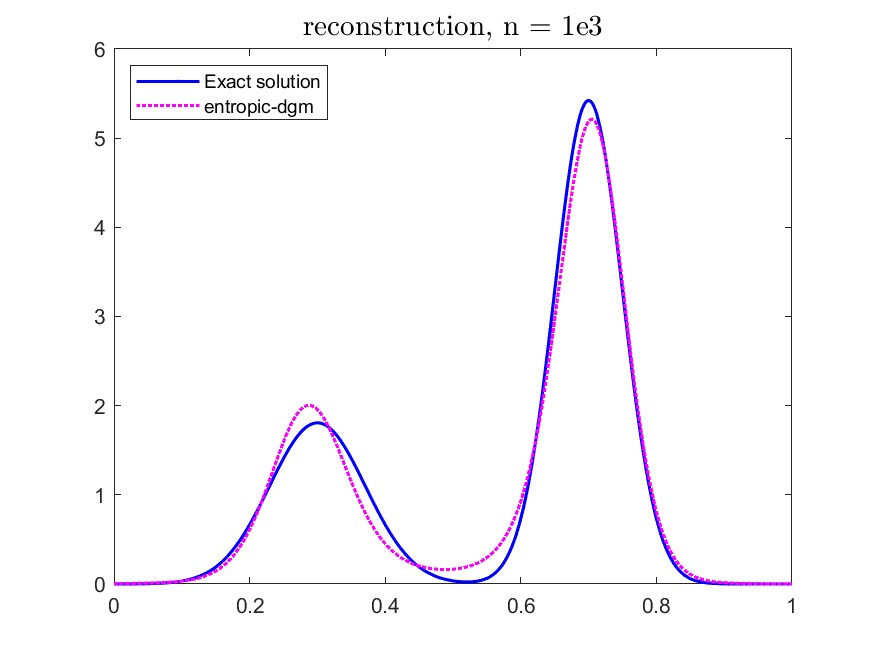}
\includegraphics[width = 0.32\textwidth]{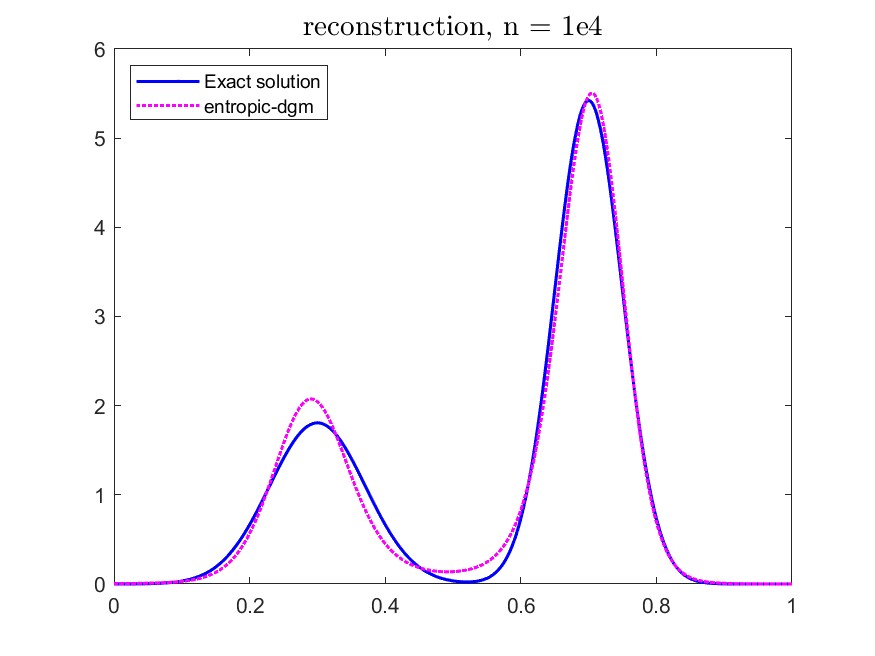}
\includegraphics[width = 0.32\textwidth]{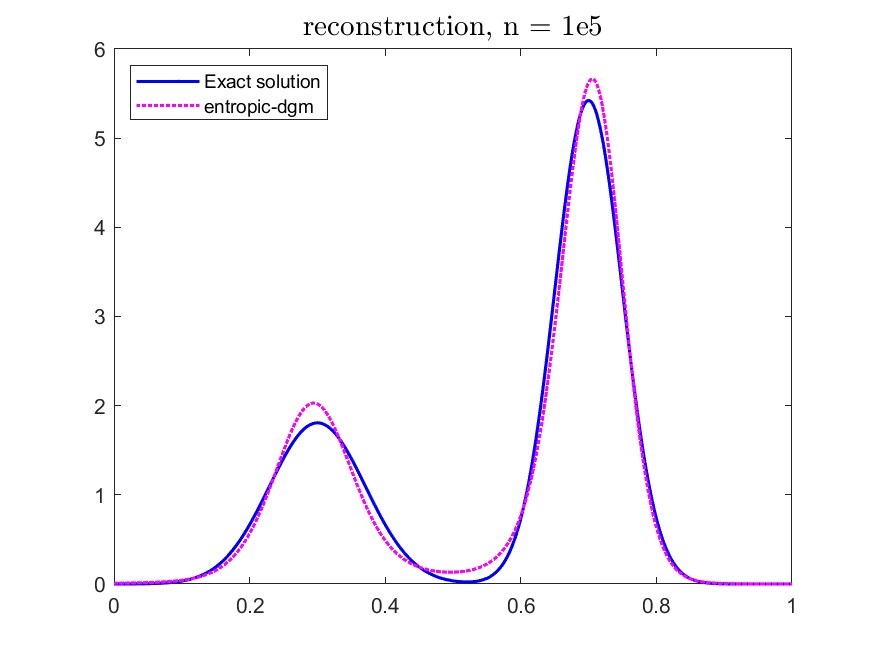}
\caption{The noisy data (10 samples, green circle), the exact data (blue one) and the reconstruction results with different sample sizes for Example \ref{ex2}. }\label{fig2}
\end{figure}

In the numerical experiment, we consider again ill-posed integral equation \eqref{Ax} with kernel
$$
k(s,t) = 3e^{-\frac{(s-t)^2}{0.04}}
$$
and exact solution
$$
x^\dag (s) = c\left( e^{-\frac{(s-0.3)^2}{0.01}}+3e^{-\frac{(s-0.7)^2}{0.005}}\right),
$$
where $c$ is chosen to ensure $\int_0^1 x(s) ds = 1$ so that $x^\dag$ is a probability density function.  Clearly the integral operator is a compact linear operator from $L^1[0,1]$ to $L^2[0,1]$. To perform numerical simulations, we divide $[0, 1]$ into $m=400$ subintervals of equal length and approximate integrals by the trapezoidal rule.
In Figure  \ref{fig2}, we plot the exact data (the blue one) and the noisy data (10 samples, green circle); these noisy data are produced from the exact data $y = A x^\dag$ by adding independent Gaussian noise in $N(0,\sigma^2)$ with $\sigma=0.4$.

\begin{table}[ht]
\caption{Numerical results for Example \ref{ex2}, where the sought solution is a probability density function.} \label{table3}
    \begin {center}
\begin{tabular}{lllllll}
     \hline
n & Iteration numbers  & Relative error & Emergency stops \\
  & (in average)   &    &    \\
\hline
$10$   & 51   &  4.2691e-01 & 2  \\
$10^2$ & 161  &  1.7945e-01 & 0  \\
$10^3$ & 370  &  1.0920e-01 & 0  \\
$10^4$ & 1142 &  9.7181e-02 & 0    \\
$10^5$ & 3418 &  9.6206e-02 & 0      \\
   \hline
    \end{tabular}\\[5mm]
    \end{center}
\end{table}

When implementing the method (\ref{entropic}), we use the step-size $\gamma=2/\|A\|^2$ and terminate the iteration by Rule \ref{Rule:MDRM} with $\beta_0 = 10$ and $\tau_n$ given by (\ref{taun2}) with $\tau_0=1.1$. We perform the numerical computation for several different sample sizes
$n=10,10^2, 10^3, 10^4, 10^5$ each is ran by 200 simulations. The reconstruction results in average are plotted in Figure \ref{fig2}. In Table \ref{table3} we report further computational results, including the averaged number of iterations, the $L^1$ relative error in average, and the utilized number of emergence stops. We also present the boxplots of the $L^1$ relative errors for 200 simulations in Figure \ref{fig2box}.
The results indicate that, when sample size increases, the relative error is reduced and more accurate reconstruction result can be produced.

\begin{figure}[htpb]
\centering
\includegraphics[width = 0.6\textwidth]{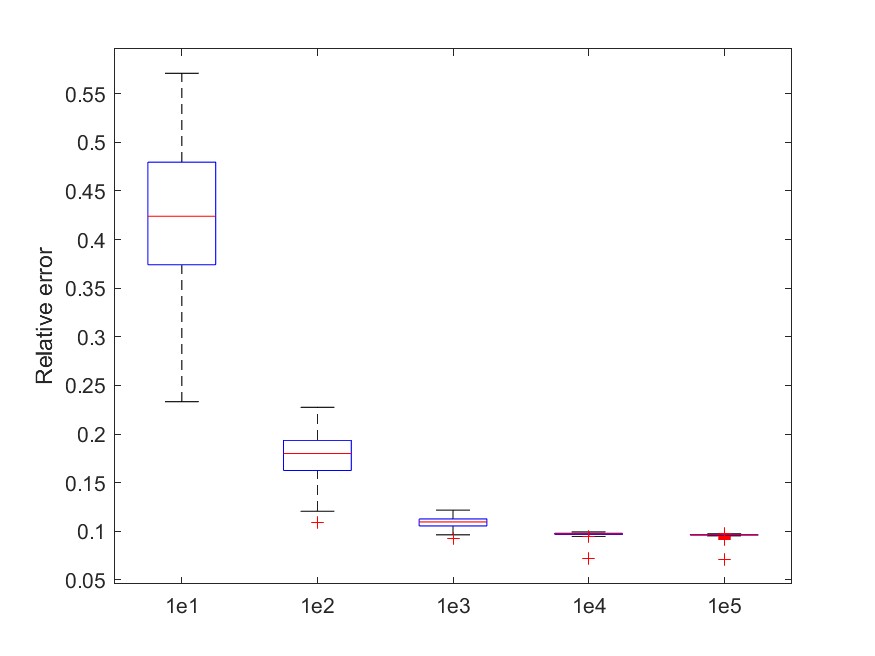}
\caption{ Boxplots of the relative errors for 200 simulations with different sample sizes for Example \ref{ex2}. }\label{fig2box}
\end{figure}

\end{example}

\begin{example}\label{ex3}
In this example we consider using the method (\ref{multi.2}) to reconstruct sparse solutions for ill-posed problems $A f = u$, where $A: L^2(\D) \to L^2(\D)$ is a bounded linear operator and $\D\subset {\mathbb R}^d$ is a bounded domain. For this purpose, we take $\R(f)$ to be a strongly convex perturbation of $\|f\|_{L^1(\D)}$, i.e.
$$
\R(f) = \|f\|_{L^1(\D)} + \frac{1}{2\beta} \|f\|_{L^2(\D)}^2,
$$
where $\beta>0$ is a large number. The method (\ref{multi.2}) then becomes
\begin{align}\label{Algorithm:L1}
\begin{split} 
f_t^{(n)} &= \beta \mbox{sign}(A^*\la_t^{(n)}) \max\{|A^*\la_t^{(n)}|-1\}, \\
\la_{t+1}^{(n)} & = \la_t^{(n)} - \gamma (A f_t^{(n)} - \bar u^{(n)}),
\end{split}
\end{align}
where $\bar u^{(n)}$ is the average of a sequence of unbiased independent identically distributed noisy data $\{u_i\}$, $i=1, \cdots, n$, of the exact data $u$.

For numerical simulations we consider determining the initial data $f$ in the time fractional diffusion equation
\begin{align*}
\left\{\begin{array}{lll}
\p_t^\a u(x, t) -\triangle u(x, t) = 0,  & x\in \D, \, t>0,\\
u(x, t) =0, &  x\in \p\D,\, t> 0,\\
u(x, 0)= f(x), &  x\in \D
\end{array}\right.
\end{align*}
from the measurement of $u(x, T)$ at a fixed later time $T>0$, where $\D=[0,1]\times [0,1]$, $0<\a<1$, and $\p_t^\a u$ denotes the Caputo fractional derivative
$$
\p_t^\a u(x, t) = \frac{1}{\Gamma(1-\a)} \int_0^t \frac{1}{(t-s)^\a} \frac{\p u}{\p s}(x, s) dx
$$
with $\Gamma$ denoting the Gamma function.
The mapping from $f$ to $u(x, T)$ is a compact linear operator $A: L^2(\D) \to L^2(\D)$. Time fractional diffusion equations occur naturally in anomalous diffusion in which the variance of the process behaves like a non-integer power of time (\cite{P1999}), sharply contrast to the classical normal diffusion which is governed by the heat equation.

\begin{figure}[htpb]
\centering
\includegraphics[width = 0.32\textwidth]{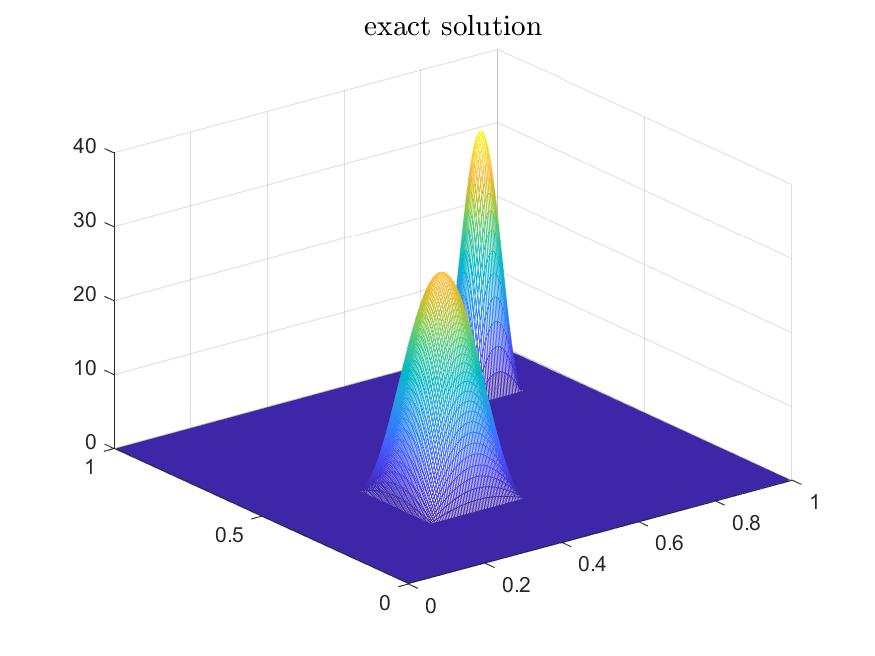}
\includegraphics[width = 0.32\textwidth]{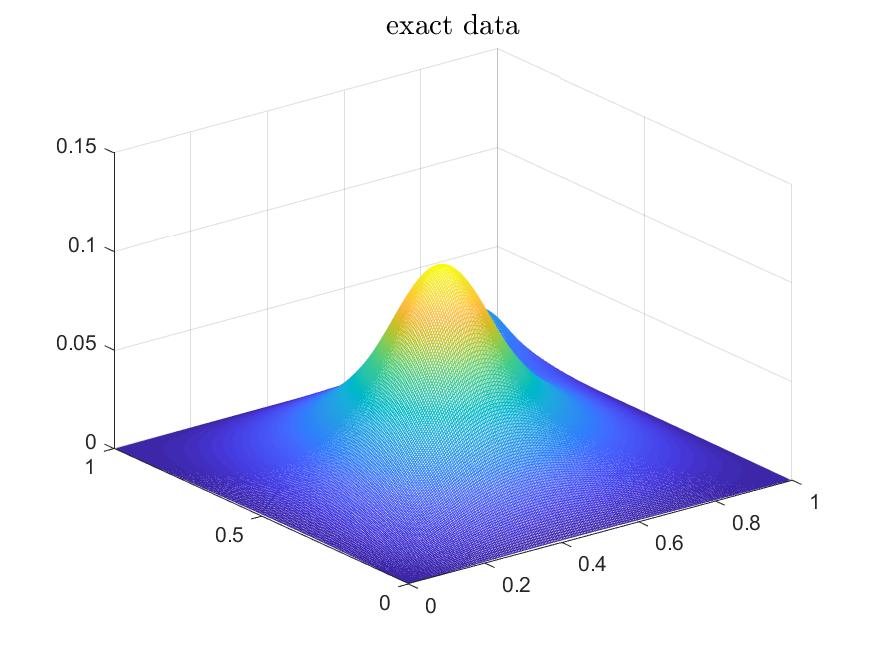}
\includegraphics[width = 0.32\textwidth]{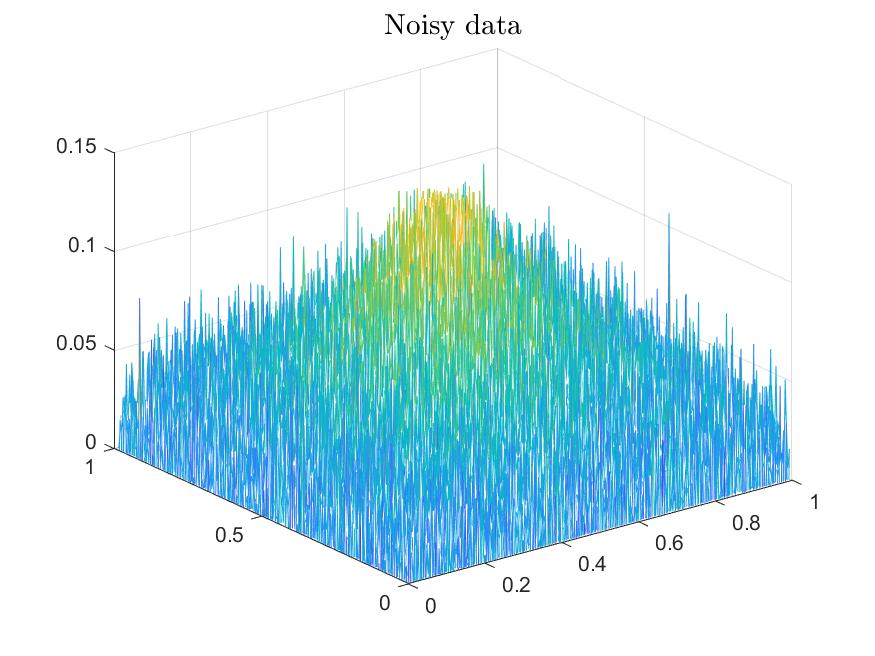}
\caption{The exact solution, the exact data  and the noisy data (random sample) for Example \ref{ex3}. }\label{fig3d}
\end{figure}

To solve this inverse problem numerically, we discretize $\D$ by taking $(N+1)\times (N+1)$ grid points
$(x_i, y_j) := (i/N, j/N)$, $i, j=0, 1, \cdots, N$
and write $u_{i,j}(t)$ for $u(x_i, y_j,t)$ and $f_{i,j}$ for $f(x_i, y_j)$. Let $h = 1/N$. Then,
by the finite difference approximation of $-\triangle u$, the diffusion equation becomes
\begin{align}\label{fd1}
\begin{split}
& \p_t^\a u_{i,j} + \frac{1}{h^2} \left(4 u_{i,j}- u_{i+1, j} - u_{i-1,j} - u_{i,j+1} - u_{i,j-1}\right) = 0,\\
& \qquad \qquad \qquad\qquad \qquad \qquad \qquad \qquad  i, j=1, \cdots, N-1,\\
& u_{0,j} = u_{N, j} = u_{i, 0} = u_{i, N}=0, \,\,\,\,\qquad i, j=0, \cdots, N,\\
&u_{i,j}(0) = f_{i,j}, \qquad i, j=1, \cdots, N-1.
\end{split}
\end{align}
It turns out that the solution of (\ref{fd1}) has the form
$$
u_{i, j}(t) =\sum_{p, q=1}^{N-1} c_{p,q}(t) \sin(iph\pi) \sin(jqh\pi),
$$
where each $c_{p,q}(t)$ satisfies the fractional ordinary differential equation
$$
\p_t^\a c_{p,q} +\mu_{p,q} c_{p,q}=0, \quad \mbox{with } \mu_{p,q} = \frac{1}{h^2}\left(4-2\cos(ph\pi)-2\cos(qh\pi)\right).
$$
Let $S$ and $S^{-1}$ denote the discrete sine transform and the inverse disrete sine transform defined respectively by
\begin{align*}
(S v)_{p,q} &:= 4 h^2\sum_{i, j=1}^{N-1} v_{i,j} \sin(iph\pi) \sin(jqh\pi),\\
(S^{-1} v)_{i,j} &:= \sum_{p, q=1}^{N-1} v_{p,q} \sin(iph\pi) \sin(jqh\pi)
\end{align*}
for any $(N-1)\times (N-1)$ matrix $(v_{i,j})$. Then we have $c_{p, q}(0)= (Sf)_{p,q}$ and thus
$c_{p,q}(t) = (Sf)_{p,q} E_\a(-\mu_{p,q} t^\a)$, where $E_\a$ denotes the Mittag-Leffler function
$$
E_\a(z) =\sum_{k=0}^\infty \frac{z^k}{\Gamma(\a k +1)}.
$$
Consequently
\begin{equation*}
u_{i,j}(t) = \sum_{p,q=1}^{N-1} (S f)_{p,q}  E_\a(-\mu_{p,q} t^\a) \sin(iph\pi) \sin(jqh\pi)
\end{equation*}
for $i, j=1, \cdots, N-1$. If we define the transform $W_t$ by $(W_t v)_{p,q} = E_\a(-\mu_{p,q} t^\a) v_{p,q}$, then
$u (t) = S^{-1} W_t S f$. Let $A = S^{-1} W_T S$. Then $f$ can be determined by solving $A f = u$, where $u:= (u_{i,j}(T))$.

\begin{figure}[htpb]
\centering
\includegraphics[width = 0.48\textwidth]{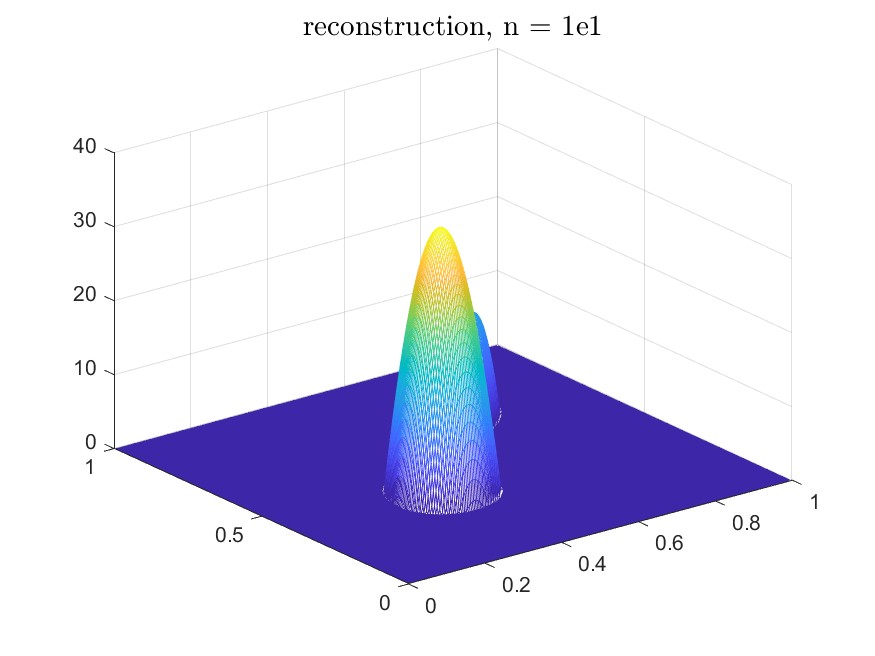}
\includegraphics[width = 0.48\textwidth]{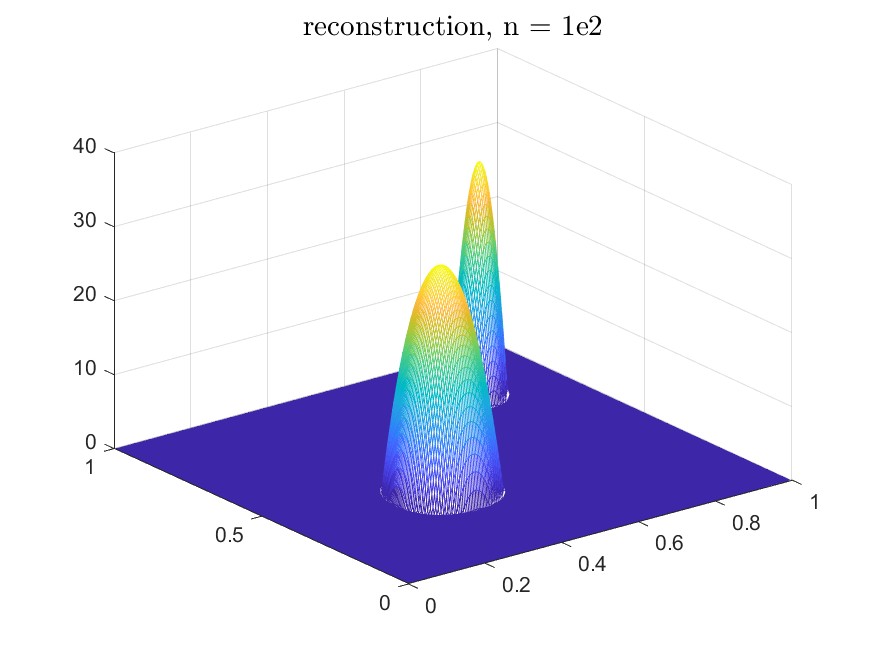}
\includegraphics[width = 0.48\textwidth]{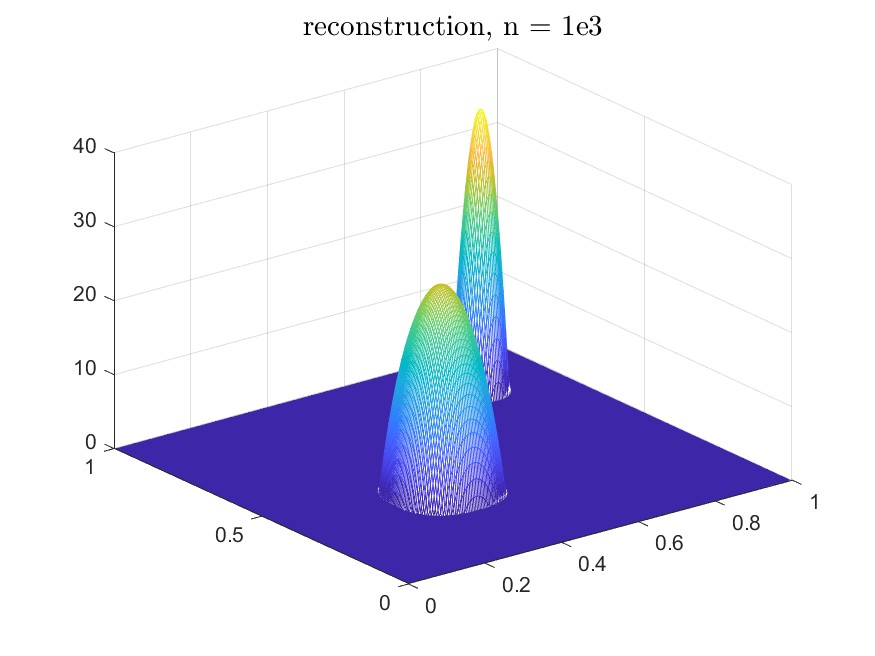}
\includegraphics[width = 0.48\textwidth]{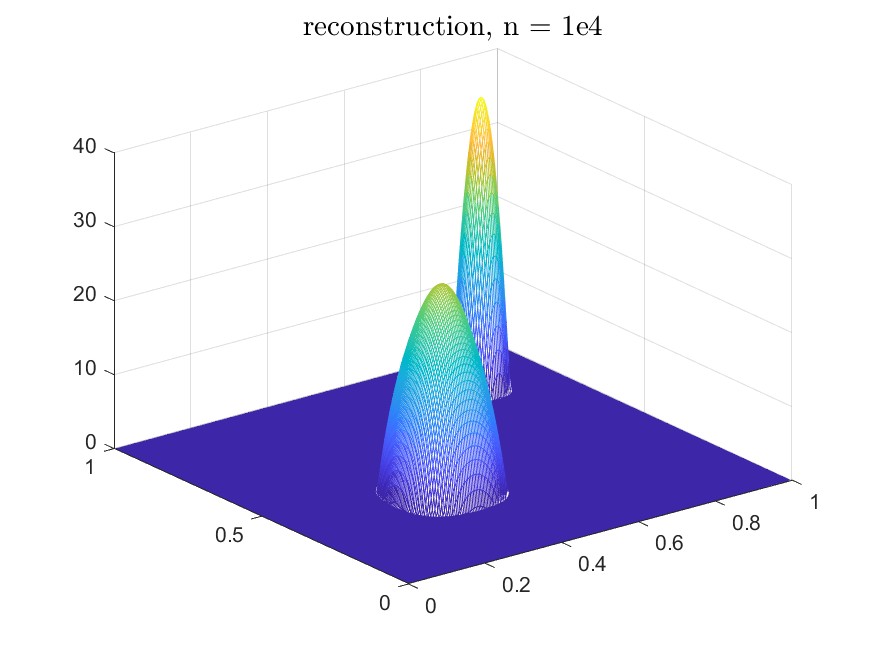}
\caption{The reconstruction results of the method  \eqref{Algorithm:L1} with different sample sizes for Example \ref{ex3}. }\label{fig3}
\end{figure}

In our numerical simulation, we assume the sought solution is sparse and consider $\D=[0,1]\times [0,1]$ on an equidistant grid of $256\times 256$ points. The sought solution $f^\dag$ and the corresponding exact data $u$ are plotted in Figure \ref{fig3d} (the left and middle ones). To reconstruct $f^\dag$, we use multiple repeated measurement data of different sample sizes which are generated from $u$ by adding independent Gaussian noise $N(0, \sigma^2)$ with $\sigma = 0.2 \max |u|$; one sample of measurement data is plotted in Figure \ref{fig3d} (the right one).



\begin{table}[ht]
\caption{Numerical results for Example \ref{ex3}, where the sought solution is sparse.} \label{table4}
    \begin {center}
\begin{tabular}{lllllll}
     \hline
 n   &  Iteration numbers &  Relative error  & Emergency stops      \\
     & (in average)       &                  &     \\
\hline
$10$   & 174 & 	 5.9865e-01  &	 0   \\
$10^2$ & 458 & 	 3.8077e-01 &	 0   \\
$10^3$ & 933 & 	 1.7786e-01 &	 0   \\
$10^4$ & 1821 & 	 1.1831e-01 & 	 0  \\
   \hline
    \end{tabular}\\[5mm]
    \end{center}
\end{table}

When applying the method (\ref{Algorithm:L1}), we take $\beta = 300$ and use the step-size $\gamma = 2/(\beta\|A\|^2)$.
The iteration is terminated by Rule \ref{Rule:MDRM} with $\beta_0 = 50$ and $\tau_n$ given by (\ref{taun2}); where $\tau_0 = 1.1$. The reconstruction results for several different sample sizes $n =10, 10^2, 10^3, 10^4$ are plotted in Figure \ref{fig3}, each is based on the average of 200 runs. In Table \ref{table3} we report the average number of iterations, the average relative errors, and the number of emergency stops. Furthermore, we present the boxplots of the relative errors in Figure \ref{fig3box}. All these result demonstrate that the proposed method can give satisfactory reconstruction results when the sample size increases and the sparsity of the sought solution can be captured well.

\begin{figure}[htpb]
\centering
\includegraphics[width = 0.48\textwidth]{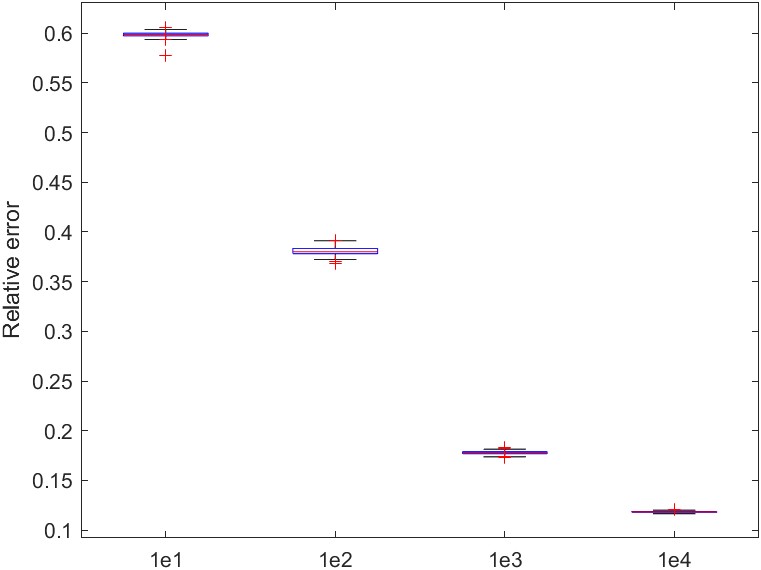}
\caption{Boxplots of the relative errors for 200 simulations with different sample sizes for Example \ref{ex3}. } \label{fig3box}
\end{figure}

\end{example}

\begin{example}\label{ex4}
In this final example we consider using the method (\ref{multi.2}) to reconstruct piecewise constant solutions.
We consider again the equation (\ref{Ax}) with the kernel given by (\ref{kernel}) and assume that the sought solution $x^\dag$ is piecewise constant. By dividing $[0,1]$ into $m =400$ subintervals of equal length and approximating integrals by the trapezoidal rule, we have a discrete ill-posed problem $\bA \bx = \by$, where $\bA$ is a matrix. We use the model
\begin{align}\label{TV.1}
\min\left\{\R(\bx):=\|\bD \bx\|_1 + \frac{1}{2\beta} \|\bD \bx\|_2^2 + \frac{1}{2\beta} \|\bx\|_2^2: \bA \bx = \by \right\},
\end{align}
where $\bD$ denotes the discrete gradient operator and $\beta$ is a large positive number. Thus, $\R$ is a strongly convex perturbation of the total variation $\|\bD\bx\|_1$. If we apply the method (\ref{multi.2}) to (\ref{TV.1}) directly, we need to solve a minimization problem related to $\R$ to obtain $\bx_t^{(n)}$ at each iteration. This can make the algorithm time-consuming since those minimization problems can not be solved explicitly.

\begin{figure}[htpb]
\centering
\includegraphics[width = 0.32\textwidth]{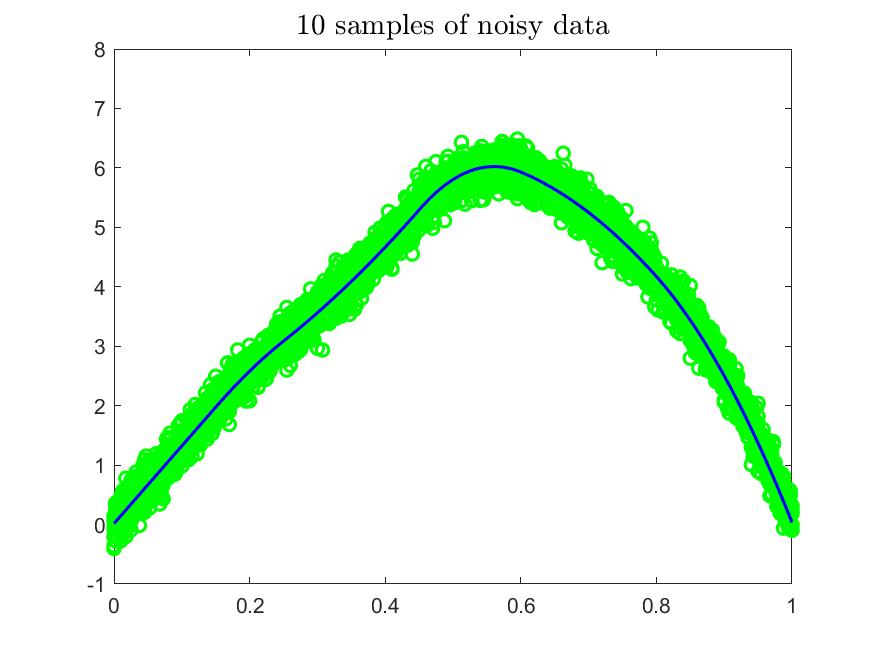}
\includegraphics[width = 0.32\textwidth]{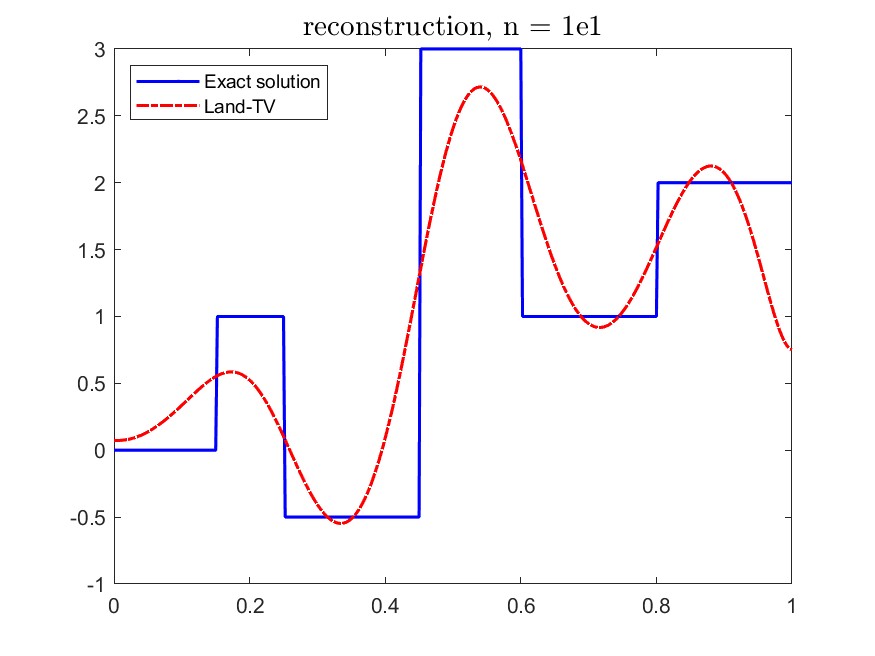}
\includegraphics[width = 0.32\textwidth]{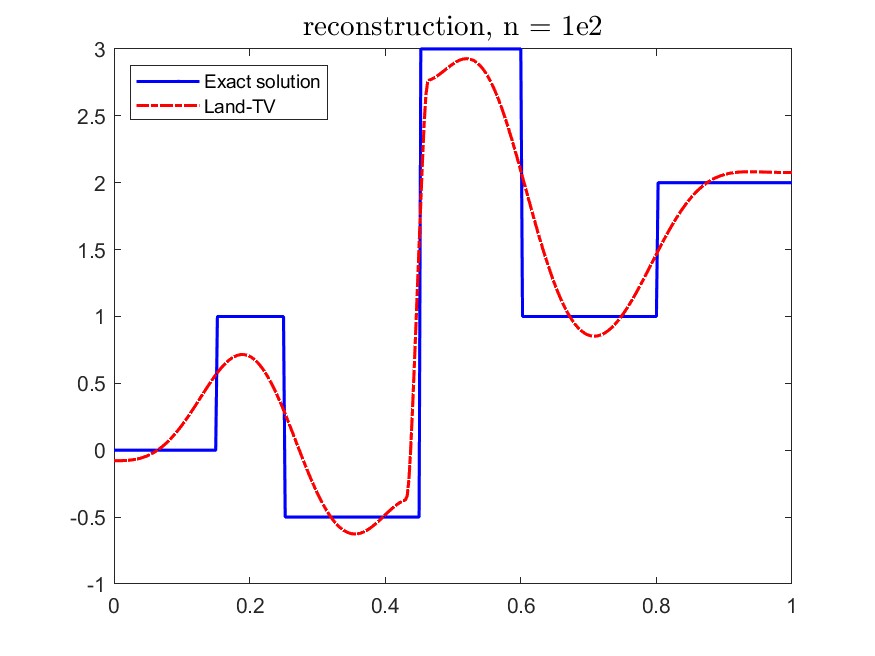}
\includegraphics[width = 0.32\textwidth]{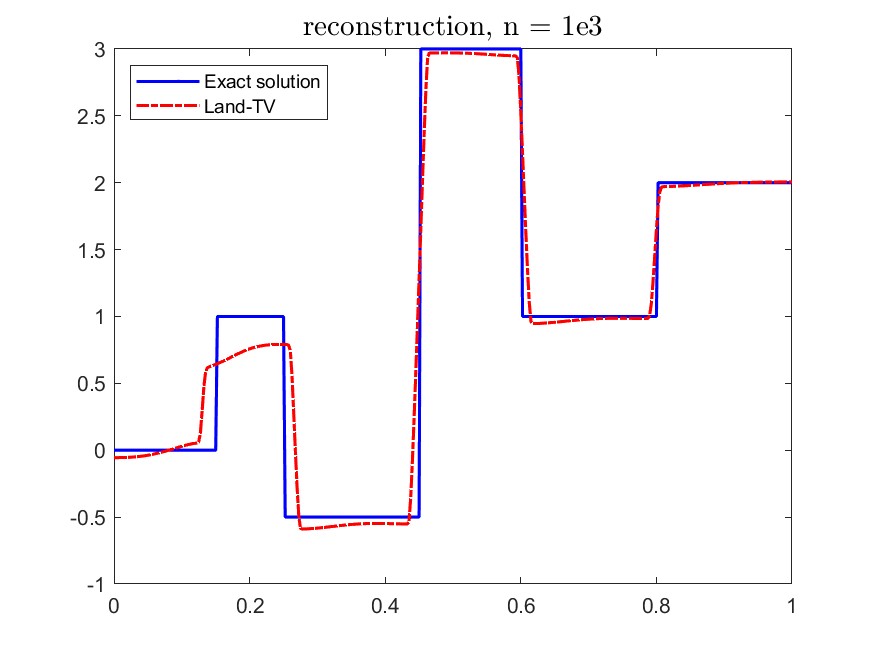}
\includegraphics[width = 0.32\textwidth]{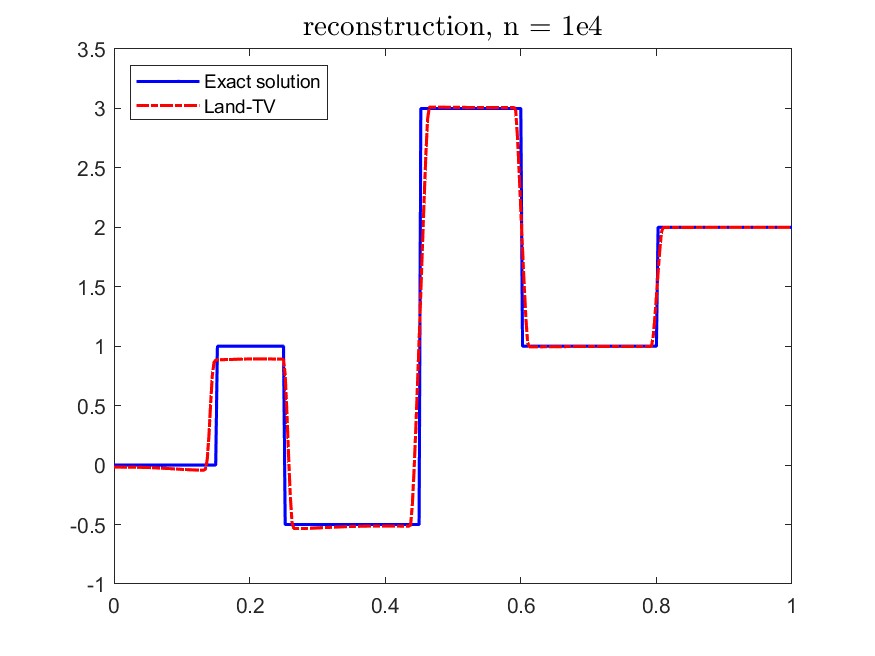}
\includegraphics[width = 0.32\textwidth]{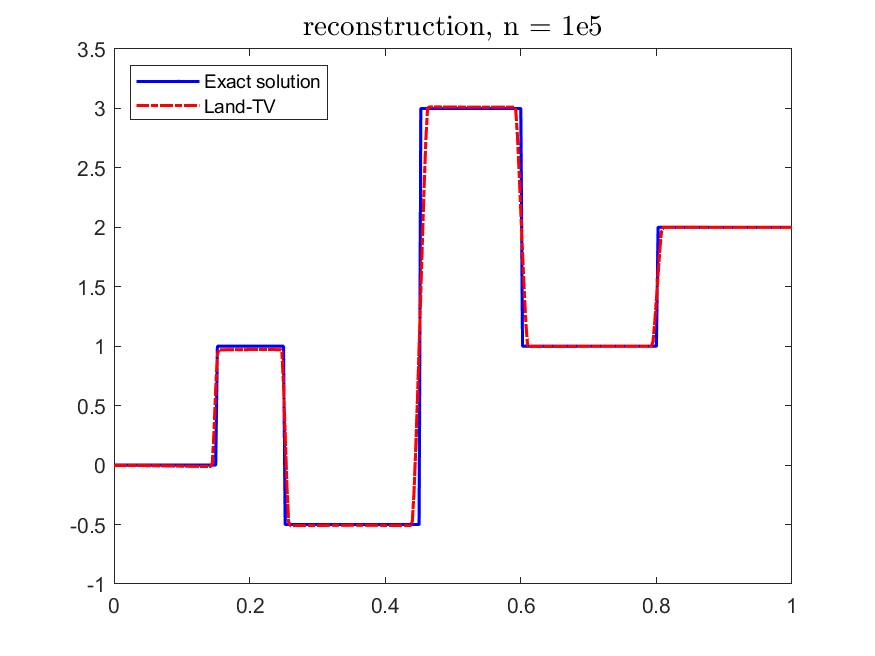}
\caption{The noisy data (10 samples, green circle), the exact data (blue one) and the reconstruction results with different sample sizes for Example \ref{ex4}. }\label{fig4}
\end{figure}

To circumvent this difficulty, by introducing $\bz = \bD \bx$ we reformulate (\ref{TV.1}) as
\begin{align}\label{TV.2}
\min\left\{f(\bz) + \phi(\bx): \bA\bx = \by \mbox{ and } \bD \bx- \bz =0\right\},
\end{align}
where
$$
f(\bz) = \|\bz\|_1 + \frac{1}{2\beta} \|\bz\|_2^2, \qquad
\phi(\bx) = \frac{1}{2\beta} \|\bx\|_2^2.
$$
Since $f(\bz) +\phi(\bx)$ is strongly convex, we may apply the method (\ref{multi.2}) to (\ref{TV.2}) to obtain the iteration scheme
\begin{align*}
(\bx_t^{(n)}, \bz_t^{(n)}) & =\arg\min_{\bx, \bz} \left\{f(\bz) + \phi(\bx) - \l \la_t^{(n)}, \bA \bx \r - \l \mu_t^{(n)}, \bD \bx - \bz\r \right\}, \\
(\la_{t+1}^{(n)}, \mu_{t+1}^{(n)}) & = (\la_t^{(n)}, \mu_t^{(n)}) - \gamma (\bA \bx_t^{(n)}-\bar \by^{(n)}, \bD \bx_t^{(n)} - \bz_t^{(n)}).
\end{align*}
Since $\bx_t^{(n)}$ and $\bz_t^{(n)}$ can be given explicitly, this leads to the following algorithm
\begin{align}\label{dgm-TV}
\begin{split}
\bx_t^{(n)} & = \beta (\bA^T \la_t^{(n)} + \bD^T \mu_t^{(n)}), \\
\bz_t^{(n)} & = - \beta \mbox{sign}(\mu_t^{(n)}) \max\{|\mu_t^{(n)}| -1, 0\}, \\
\la_{t+1}^{(n)} & = \la_t^{(n)} - \gamma (\bA \bx_t^{(n)} - \bar \by^{(n)}), \\
\mu_{t+1}^{(n)} & = \mu_t^{(n)} - \gamma (\bD \bx_t^{(n)} - \bz_t^{(n)}).
\end{split}
\end{align}

\begin{table}[ht]
\caption{Numerical results for Example \ref{ex4}, where the sought solution is piecewise constant.} \label{table5}
\begin {center}
\begin{tabular}{lllllll}
     \hline
 n   &  Iteration numbers  &  Relative error  & Emergency stops \\
     & (in average)    &     &    \\
\hline
$10$   & 1109  & 3.5812e-01  & 10    \\
$10^2$ & 17877 & 2.2729e-01  & 70    \\
$10^3$ & 64078 & 1.7208e-01  & 0     \\
$10^4$ & 242623 & 1.3263e-01  &	 0  \\
$10^5$ & 675068 & 	 1.0761e-01 &	 0   \\
   \hline
    \end{tabular}\\[5mm]
    \end{center}
\end{table}

In our numerical experiments, the sought solution is piecewise constant, whose graph together with the exact data is plotted in Figure \ref{fig4} in blue. In order to apply the method (\ref{dgm-TV}), we use multiple repeated measurement data of several different sample sizes $n = 10, 10^2, 10^3, 10^4, 10^5$ corrupted by independent Gaussian noise $N(0, \sigma^2)$ with $\sigma = 0.2$ and let $\bar \by^{(n)}$ be the average of these data; 10 samples of these measurement data are plotted in Figure \ref{fig4} in green. The step-size in (\ref{dgm-TV}) is chosen to be $\gamma = 2/[\beta(\|\bA\|^2+4)]$. In order to enhance the effect of total variation in reconstruction, we take $\beta = 100$. The method is terminated by Rule \ref{Rule:MDRM} adapted to (\ref{TV.2}) with $\beta_0 = 200$ and $\tau_n$ given by (\ref{taun2}), where $\tau_0=1.1$. For each sample size, we run 200 simulations and the reconstruction results in average are plotted in Figure \ref{fig4} in red. In Table \ref{table5} we report further numerical results including
the average number of iterations, the relative error in average, and the number of emergency stops used. The boxplot of the relative error is presented in Figure \ref{figbox4}. These results clearly demonstrate that the proposed method converges and more and more satisfactory reconstruction results can be obtained when the sample size increases.

\begin{figure}[htpb]
\centering
\includegraphics[width = 0.48\textwidth]{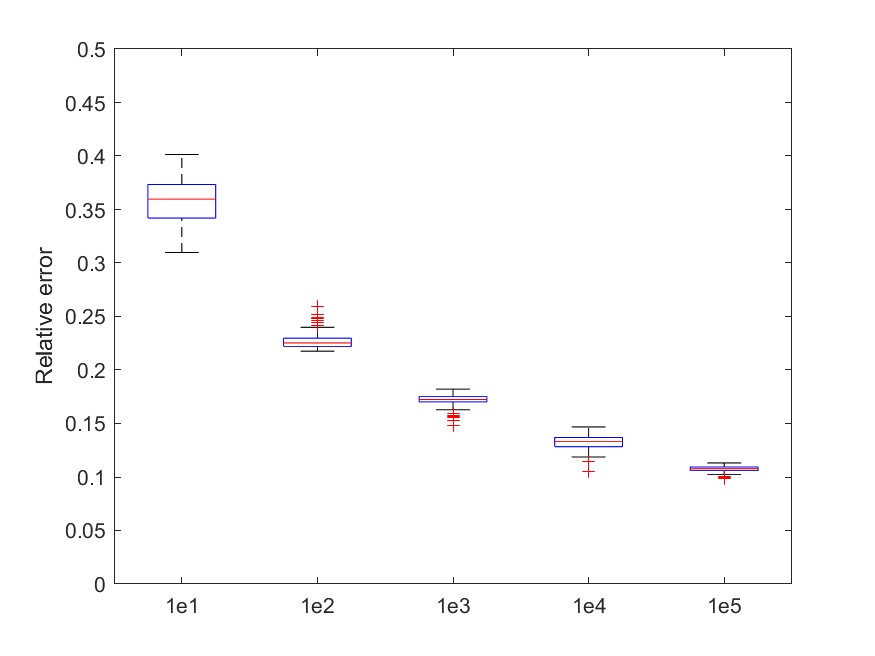}
\caption{Boxplots of the relative errors for 200 simulations with different sample sizes for Example \ref{ex4}. }\label{figbox4}
\end{figure}

\end{example}

\end{document}